\newtheorem{thm}{Theorem}
\newtheorem{prop}[thm]{Proposition}
\newtheorem{coroll}[thm]{Corollary}
\newtheorem{rmk}[thm]{Remark}
\newtheorem{lemma}[thm]{Lemma}
\newtheorem{hypo}{Hypothesis}
\newcommand\blfootnote[1]{%
	\begingroup
	\renewcommand\thefootnote{}\footnote{#1}%
	\addtocounter{footnote}{-1}%
	\endgroup
}
\newcommand{\R}{\mathbb{R}}
\newcommand{\pO}{\partial \Omega}
\newcommand{\EE}{\mathbb{E}}
\newcommand{\loc}{\text{loc}}
\newcommand{\CC}{\mathcal{C}}
\newcommand{\vertiii}[1]{{\left\vert\kern-0.25ex\left\vert\kern-0.25ex\left\vert #1 
		\right\vert\kern-0.25ex\right\vert\kern-0.25ex\right\vert}}
\newcommand{\dd}{\color{black}}
\newcommand{\vp}{v_{\perp}}
\newcommand{\vt}{v_{\parallel}}
\newcommand{\up}{u_{\perp}}
\newcommand{\ut}{u_{\parallel}}
\newcommand{\rp}{r_{\perp}}
\newcommand{\rt}{r_{\parallel}}
\newcommand{\MBC}{\textbf{(MBC)}}
\newcommand{\CLBC}{\textbf{(CLBC)}}
\newcommand{\MBCM}{\textbf{(MBCM)}}
\newcommand{\MBCkappa}{\textbf{(MBC$\kappa$)}}
\renewcommand{\d}{\mathrm{d}}
\title[Degenerate Kinetic Equations with Non-Isothermal Boundaries]{Asymptotic Behavior of Degenerate Linear Kinetic Equations with Non-Isothermal Boundary Conditions}
\author{Armand Bernou}
\address{Dipartimento di Matematica, Universit\`a di Roma ``La Sapienza'', P.le Aldo Moro 2, 00185 Roma, Italy}
\email{armand.bernou@uniroma1.it}
\begin{document}
	
	\begin{abstract}
		We study the degenerate linear Boltzmann equation inside a bounded domain with a generalized diffuse reflection at the boundary and variable temperature, including the Maxwell boundary conditions with the wall Maxwellian or heavy-tailed reflection kernel and the Cercignani-Lampis boundary condition. Our abstract collisional setting applies to the linear BGK model, the relaxation towards a space-dependent steady state, and collision kernels with fat tails. We prove for the first time the existence of a steady state and a rate of convergence towards it without assumptions on the temperature variations. Our results for the Cercignani-Lampis boundary condition make also no hypotheses on the accommodation coefficients. The proven rate is exponential when a control condition on the degeneracy of the collision operator is satisfied, and only polynomial when this assumption is not met, in line with our previous results regarding the free-transport equation. We also provide a precise description of the different convergence rates, including lower bounds, when the steady state is bounded. Our method yields constructive constants.  
	\end{abstract}
	
	\maketitle

	\blfootnote{\textit{2020 Mathematics Subject Classification:} 35B40, 35Q20 (82C40, 82D05).}
	
	\blfootnote{\textit{Key words and phrases:} Degenerate linear Boltzmann equation, non-isothermal boundaries, steady state, Maxwell boundary condition, Cercignani-Lampis boundary condition, long-time behavior of kinetic PDEs, space-dependent relaxation model, heavy-tailed collisional kernel, heavy-tailed diffuse reflection.}

	\section{Introduction}
	
	\subsection{Model}
	In this article, we study the degenerate linear Boltzmann equation set inside a $C^2$ bounded domain (open, connected) $\Omega \subset \R^d$, $d \in \{2,3\}$, with some  boundary conditions that we detail below. The initial boundary value problem writes 
	\begin{align}
		\label{eq:main_pb}
		\left\{
		\begin{array}{lll}
			&\partial_t f + v \cdot \nabla_x f = \mathcal{C}f, \qquad &\mathrm{in } \quad \R_+ \times G, \\
			&\gamma_- f = K \gamma_+ f, \qquad &\mathrm{on } \quad \R_+ \times \Sigma_-, \\
			&f_{| t = 0} = f_0, \qquad &\mathrm{in } \quad G,
		\end{array}
		\right.
	\end{align}
	with the notations $G := \Omega \times \R^d$, and, denoting by $n_x$ the unit \textbf{outward} normal vector at $x \in \partial \Omega$, 
	\[ \Sigma := \partial \Omega \times \R^d, \qquad \Sigma_{\pm} := \Big\{(x,v) \in \Sigma, \pm (v \cdot n_x) > 0 \Big\}. \]
	In \eqref{eq:main_pb}, the unknown function $f = f(t,x,v)$ is the so-called distribution function. The quantity $f(t,x,v) \d v \d x$ can be understood as the (non-negative) density at time $t$ of particles whose positions are close to $x$ and velocities close to $v$. We will study \eqref{eq:main_pb} in a $L^1$ framework, and we denote by $\gamma_\pm f$ the trace of $f$ on $\Sigma_\pm$. 
	
	\subsection{The collision operator}
	
	We consider the linear degenerate Boltzmann equation. 
	The corresponding collision operator $\CC$ is defined, for all $f : G \to \R$, for $(x,v) \in \Omega \times \R^d$, by
	\begin{align*}
		\CC f (x,v) = \int_{\R^d} \big( k(x,v',v) f(x,v') - k(x,v,v') f(x,v) \big) \, \d v',
	\end{align*}
	see below the precise assumptions made on the non-negative function $k$ and on $f$ to make sense of this integral.
	The so-called collision kernel, $k$, describes the interactions between the particles and the background. We emphasize that $k$ is modulated in space. Concrete examples of $k$, including the linear BGK model, a heavy-tailed collision kernel, and the (non-degenerate) linear Boltzmann model, are presented in Section \ref{sec:applications}. We may split this collision operator, as 
	\[ \CC f (x,v) = \CC_+ f (x,v) + \CC_- f (x,v), \]
	where the \textit{gain} and \textit{loss} terms are given respectively by 
	\begin{align*}
		\CC_+ f (x,v) = \int_{\R^d} k(x,v',v) f(x,v') \, \d v', \qquad \CC_- f (x,v) = - \Big( \int_{\R^d} k(x,v,v') \, \d v' \Big) f(x,v). 
	\end{align*}
	
	By symmetry, note that the following equality formally holds
	\begin{align}
		\label{eq:sym_k}
		\forall x \in \Omega, \qquad \int_{\R^d} \int_{\R^d} \big( k(x,v',v) f(x,v') - k(x,v,v') f(x,v) \big) \, \d v' \, \d v = 0. 
	\end{align}
	

	\subsection{Boundary conditions}
	
	In this paper we model the interaction between the particles and the wall boundary by generalized diffuse boundary conditions of two types, with varying temperature. Set, for all $x \in \pO$,
	\[ \Sigma_{\pm}^x  := \big\{v \in \R^d, (x,v) \in \Sigma_{\pm} \big\}. \]  
	The boundary operator $K$ is defined, for $\phi$ supported on $(0,\infty) \times \Sigma_+$, for $(t,x,v) \in (0,\infty) \times \Sigma_-$ and assuming that $\phi(t,x,\cdot) \in L^1(\Sigma_+^x,  R(v' \to v;x) |v' \cdot n_x| \d v')$, by
	\begin{align}
		\label{eq:def_K}
		K \phi(t,x,v) = \int_{\Sigma_+^x} \phi(t,x,u) \, R(u \to v;x) \, |u \cdot n_x| \, \d u,
	\end{align}
	with two possible choices for the kernel $R(u \to v;x)$, both depending on some temperature function $\partial\Omega \ni x \to \theta(x)\in \R_+^*$ acting on the boundary and continuous (in the topology induced by local coordinates, as $\Omega$ is $C^2$): 
	\begin{itemize}
		\item \textbf{The Cercignani-Lampis boundary condition \CLBC.} In this case, $R$ is given, for $x$ in $\pO$, $u \in \Sigma_+^x$, $v \in \Sigma_-^x$, by 
		\begin{align}
			\label{eq:def_R}
			R(u \to v; x) &:= \frac{1}{\theta(x) \rp} \frac{1}{(2 \pi \theta(x) \rt (2 - \rt))^{\frac{d-1}{2}}} \exp \Big( -\frac{|\vp|^2}{2 \theta(x) \rp} -\frac{(1-\rp) |\up|^2}{2 \theta(x) \rp} \Big) \\
			&\qquad \times I_0 \Big( \frac{(1-\rp)^{\frac12} \up \cdot \vp}{\theta(x) \rp} \Big) \,  \exp \Big(-\frac{|\vt - (1-\rt) \ut|^2}{2 \theta(x) \rt (2 - \rt)} \Big), \nonumber
		\end{align}
		with the following notations:
		\begin{align*}
			\vp := (v \cdot n_x) n_x, \quad \vt := v - \vp, \quad \up := (u \cdot n_x) n_x, \quad \ut = u - \up,
		\end{align*}
		where $I_0$ is the modified Bessel function given, for all $y \in \R$, by
		\begin{align}
			\label{eq:defI0}
			I_0(y) := \frac{1}{\pi} \int_0^{\pi} \exp \Big( y \cos \phi \Big) \, \d \phi,
		\end{align}
		and where $\theta(x) > 0$ is the wall temperature at $x \in \pO$. The coefficients $\rp \in (0,1]$ and $\rt \in (0,2)$ are the two accommodation coefficients (normal and tangential) at the wall. The value $\vp$ is the normal component of the velocity $v$ at the boundary, while $\vt$ is the tangential component. The same interpretation is of course valid for $u$. 
		
		We will heavily use the normalization property, see \cite[Lemma 10]{Chen_CL_2020}, which, with our notation for $R$, writes, for all $(x,u) \in \Sigma_+$,
		\begin{align}
			\label{eq:normalization_basic}
			\int_{\Sigma_-^x} R(u \to v; x) \, |v \cdot n_x| \, \d v = 1. 
		\end{align}
		Combined with the symmetry property \eqref{eq:sym_k}, this condition will ensure the conservation of mass, as well as the $L^1$ contraction property of the associated semigroup.
		
		\medskip 
		
		\item \textbf{The Maxwell boundary condition \MBC.} For $(x,v) \in \partial \Omega \times \R^d$, we set
		\begin{align}
			\label{eq:def_eta}
			\eta_x(v) := v - 2(v\cdot n_x)n_x.
		\end{align} In this case, $R$ is given, for $x \in \partial \Omega$, $u \in \Sigma_+^x$, $v \in \Sigma_-^x$, by the following formula
		\begin{align}
			\label{eq:R_Maxwell} 
			R(u \to v; x) = \beta(x) M(x,v) + (1-\beta(x)) \delta_{\eta_x(v)}(u) \frac{1}{|v \cdot n_x|},
		\end{align}
		where $\delta_y$ is the Dirac Delta measure at $y \in \R^d$, where $\beta : \partial \Omega \to [0,1]$ is the accommodation coefficient in this setting. The distribution $M$ is defined on $\Sigma_-$ and only depends on $x$ through the temperature $\theta(x)$.  We assume that $M$ satisfies, for all $(x,v) \in \Sigma_-$
		\begin{align}
		\label{eq:property_M}
		M(x,v) > 0, \qquad M(x,v) \le \chi_M, \qquad \int_{\Sigma^x_-} M(x,v') |v' \cdot n_x| \, \d v' = 1,
\end{align}
for some $\chi_M > 0$. Moreover, using also the continuity of $\theta$, we assume that $M$ is continuous on $\Sigma_-$. We finally impose a moment property: there exists $\varepsilon > 0$ such that for all $x \in \partial \Omega$
\begin{align}
    \label{eq:moment_M}
    \int_{\Sigma^x_-} M(x,v) |v|^{1 + \varepsilon} \, \d v < \infty.  
\end{align} 
\end{itemize}

		\subsection{Assumptions and main results}
	
	We denote by $L^p(E; F)$, $1 \le p \le \infty$ the usual $L^p$ spaces of applications from $E$ with values in the Banach space $F$, endowed with the usual norms. We simply write $L^p(E)$ when $F = \mathbb{R}$. We present first our hypotheses regarding the boundary condition:
	\begin{hypo}
		\label{hypo:boundary}
		The boundary operator is defined by \eqref{eq:def_K}, with the reflection operator $R$ given either by
		\begin{enumerate}
			\item case \CLBC: equation \eqref{eq:def_R} with $(\rp, \rt) \in (0,1] \times (0,2)$;
			\item case \MBC: equation \eqref{eq:R_Maxwell} with $\beta \ge \beta_0$ on $\partial \Omega$ for some $0 < \beta_0 \le 1$, with $M$ continuous on $\Sigma_-$ satisfying \eqref{eq:property_M} and \eqref{eq:moment_M}.
		\end{enumerate}
		In both cases, $\theta: \partial \Omega \to \R_+^*$ is continuous. 
	\end{hypo} 
	Regarding the collision operator, we make the following assumptions:
	\begin{hypo}
		\label{hypo:k}
		\begin{enumerate}
			\item $k \in L^{\infty}(\Omega \times \R^d \times \R^d; \R_+)$ with $k_\infty := \sup_{(x,v,v') \in G \times \R^d} |k(x,v,v')|$; 
			\item there exist $\delta_k \in (0, \tfrac12)$, $M_{\delta_k} > 0$ such that for all $x \in \Omega$, $v \in \R^d$, \[\int_{\R^d} k(x,v,v') |v'|^{2\delta_k} \, \d v' \le M_{\delta_k}; \]
			\item there exists $\sigma \in L^{\infty}(\Omega; \R_+)$ such that for all $x \in \Omega$, $v \in \R^d$, $\int_{\R^d} k(x,v,v') \, \d v' = \sigma(x)$. 
		\end{enumerate}
	\end{hypo}
	
	We set $\sigma_\infty := \|\sigma\|_\infty$ in the whole paper.
	To study the long-time behavior of \eqref{eq:main_pb}, we will distinguish between two regimes. We prove the existence of a steady state in both cases, however the rates of convergence differ. In the first setting, only Hypotheses \ref{hypo:boundary} and \ref{hypo:k} are assumed. We prove that the rate of convergence is then bounded from above by the (optimal) polynomial rate of $(1+t)^{-d}$ derived for the free-transport equation in \cite{Bernou_2020, Bernou_2021, Bernou_2022}. In the second framework, $\sigma$ is almost everywhere bounded from below by a positive constant, and exponential convergence towards the steady state is derived. 
	\begin{hypo}
		\label{hypo:sigma_below}
		There exists $\sigma_0 > 0$ such that for almost all $x \in \Omega$, $\sigma(x) \ge \sigma_0$. 
	\end{hypo}
	
	
	Ultimately, our upper bounds rely on applications of Harris' theorems, in both the exponential case and the sub-exponential one. Accordingly, we obtain convergence results in the $L^1$ norm depending on some weighted $L^1$ norm of the initial data. 
	
	To define our weighted norms, we introduce the function
	\begin{align}
		 \label{eq:def_tau}
			\tau(x,v) := \left\{
			\begin{array}{lll} &\inf \{t > 0: x + tv \in \partial \Omega \} \quad &\hbox{ in } G \cup \Sigma_-, \\
			&0 \quad &\hbox{ in } \Sigma_+ \cup \Sigma_0, 
		\end{array} \right. 
	\end{align}
	where $\Sigma_0 = \{(x,v) \in \Sigma, v \cdot n_x = 0\}$. 
	The weights considered in this paper will take the following guise: for all $(x,v) \in \bar G$, the closure of $G$, for $d(\Omega)$ the diameter of $\Omega$ (see Subsection \ref{subsec:notations} below)
	\begin{align}
		\label{eq:def_m_alpha}
		m_\alpha(x,v) = \big(e^2 + \tfrac{d(\Omega)}{|v|c_4} - \tau(x,-v) + |v|^{2 \delta} \big)^\alpha
	\end{align}
	for $0 < \delta < \frac{\delta_k}{d} \wedge \frac{\varepsilon}{2d}$ that will be fixed from now on (see Hypothesis \ref{hypo:k} for the definition of $\delta_k$, \eqref{eq:moment_M} for the one of $\varepsilon$), for various $\alpha \in (0,d)$ and for $c_4 \in (0,1)$ a constant such that $(1-c_4)^4 = 1 - \beta_0$ (see Hypothesis \ref{hypo:boundary}). It is to be understood that any value $c_4 \in (0,1)$ can be considered for the case \CLBC \space and for the case \MBC \space when $\beta \equiv 1$. The reader may consider $c_4 = \frac12$ in the whole paper in those cases. Similarly, the choice $\varepsilon = d$ will be assumed throughout the paper in case \CLBC. 
	
	\begin{rmk}
		The form of the weights $m_\alpha$ may appear cumbersome at first sight. They are slight modifications of the natural weights of the form $(1 + \tau(x,v) + |v|^{2\delta})^\alpha$ used in \cite{Bernou_2021}: this change from $\tau(x,v)$ to $\frac{d(\Omega)}{|v|c_4} - \tau(x,-v)$ allows to also treat the Maxwell boundary condition in a unified framework.
	\end{rmk} 

 \noindent  Regarding the choice of $M$ in \MBC, our hypotheses encompass two natural cases:
    \begin{itemize}
        \item The Maxwellian at the wall: for all $(x,v) \in \Sigma_-$
\begin{align}
			\label{eq:def_M*}
			M(x,v) := \frac{1}{\theta(x) (2 \pi \theta(x))^{\tfrac{d-1}{2}}} e^{-\frac{|v|^2}{2 \theta(x)}}. 
		\end{align}
        This modeling of the reflection at the boundary dates back to Maxwell \cite{Maxwell_1879} and is ubiquitous in the mathematical literature -- see the discussion below. We shall refer to this condition as \MBCM.
        \item The $\kappa$-distributions at the wall. Those distributions with fat tails have been widely used in the physics literature since their introduction by Vasilyunas \cite{Vasyliunas_1968} in the study of low-energy electrons in the magnetosphere and, to model a reflection at the wall, take the form
        \begin{align}
        \label{eq:def_kappa}
            M(x,v) := c_M \frac1{\theta(x)^{\frac{d+1}{2}}} \big( 1 + \tfrac{|v|^2}{(\frac{d}2 + \varpi) \theta(x)} \big)^{-(d/2 + \varpi)}, \qquad (x,v) \in \Sigma_-,
        \end{align}
        where $\varpi > \frac12$ parametrizes the power-law of the distribution tail and $c_M > 0$ is a normalizing constant independent of $x$ and $v$ so that for all $x \in \partial \Omega$, $\int_{\Sigma^x_-} M(x,v) \, |v \cdot n_x| \, \d v= 1$. We shall refer to this choice as \MBCkappa. 
    \end{itemize}
    Formally, the $\kappa$-distribution \eqref{eq:def_kappa} approaches the Maxwellian one \eqref{eq:def_M*} as $\varpi \to \infty$. In this sense, one result of the present paper is that this whole class of reflection kernel yields similar asymptotic behavior of the collisional equation \eqref{eq:main_pb}. 

    \medskip 

    \noindent
    For all $f \in L^1(G)$, we use the notation
	\begin{align*}
		\langle f \rangle := \int_{G} f(x,v) \, \d v \, \d x. 
	\end{align*}
	We write $\|\cdot\|_{L^1}$ for the norm of $L^1(G)$, and for all $w : G \to [1, \infty)$, we set
	\begin{align*}
		L^1_w(G) := \Big\{f \in L^1(G), \int_G |f(x,v)| w(x,v) \, \d v \, \d x < \infty\Big\} \quad \hbox{ and } \quad \forall f \in L^1_w(G),  \|f\|_{w} := \|f w\|_{L^1}. 
		\end{align*}
	After proving that the problem \eqref{eq:main_pb} is well-posed under Hypotheses \ref{hypo:boundary} and \ref{hypo:k}, see Theorem \ref{thm:well_posed_trace}, we introduce the semigroup $(S_t)_{t \ge 0}$
	such that, for all $f \in L^1(G)$, for all $t>0$, $S_t f$ is the unique solution of \eqref{eq:main_pb} at time $t > 0$ belonging to $L^1(G)$. Our main results are written at this semigroup level. Throughout the paper, the constants $C, \kappa > 0$ are independent of time and initial data and are allowed to change from line to line. We sometimes write subscripts to emphasize dependencies, for instance $C_p$ if $C$ depends on some parameter $p$.  
	
	\begin{thm}
		\label{thm:main_1}
		Assume that Hypotheses \ref{hypo:boundary} and \ref{hypo:k} hold. Then for all $p \in (0,d)$, there exists a constant $C > 0$ such that for all $t \ge 0$, for all $f, g$ in $L^1_{m_p}(G)$ with $\langle f \rangle = \langle g \rangle$, there holds:
		\begin{align}
			\label{eq:polynomial_cvg_difference}
			\big\| S_t f - S_t g \big\|_{L^1} \le \frac{C}{(t+1)^p} \|f-g\|_{m_p}.
		\end{align}
		Under Hypotheses \ref{hypo:boundary}-\ref{hypo:sigma_below}, for all $q \in (0,d)$, there exist two constants $C, \kappa > 0$ such that for all $t \ge 0$, for all $f,g$ in $L^1_{m_q}(G)$ with $\langle f \rangle = \langle g \rangle$, there holds:
		\begin{align}
			\label{eq:exponential_cvg_difference}
			\big\| S_t f - S_t g \big\|_{L^1} \le C e^{- \kappa t} \|f-g\|_{m_q}. 
		\end{align}
	\end{thm}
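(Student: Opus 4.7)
The plan is to deduce both bounds from Harris-type theorems applied to the semigroup $(S_t)$: the geometric Harris theorem for \eqref{eq:exponential_cvg_difference} and a sub-geometric version (in the style of Douc--Fort--Guillin and Hairer) for \eqref{eq:polynomial_cvg_difference}. Mass conservation for $S_t$ follows from \eqref{eq:sym_k} together with the normalization \eqref{eq:normalization_basic}, so applying the theorems to the difference $f-g$, which has zero total mass by hypothesis, yields the claimed $L^1$ contractivity at the appropriate rate. Both theorems require the same two ingredients: a Foster--Lyapunov drift inequality for the weights $m_\alpha$ and a minorization (Doeblin) condition on sublevel sets $\{m_\alpha \le R\}$.

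For the drift, under Hypothesis \ref{hypo:sigma_below} I would establish an inequality of the form
\[
S_t m_q \le \rho(t)\, m_q + C_q, \qquad \rho(t) \xrightarrow[t \to \infty]{} 0,
\]
combining the pointwise damping $e^{-\sigma_0 t}$ coming from the loss operator $\CC_-$ with the gain in the boundary weight $\tfrac{d(\Omega)}{c_4 |v|} - \tau(x,-v)$ produced at each reflection, and using the $2\delta_k$-moment bound from Hypothesis \ref{hypo:k} together with the Maxwellian tails of $R$ to control the $|v|^{2\delta}$ part of $m_\alpha$. In the polynomial case the exponential damping is no longer available, and I would instead derive a sub-geometric drift of the form $S_T m_p \le m_p - c\, m_{p-1} + C\, \mathbf{1}_{\{m_p \le R\}}$ for suitable $T, R > 0$; this is precisely the condition that, via the sub-geometric Harris theorem and the interpolation $m_{p-1} / m_p \asymp (1+t)^{-1}$ along free trajectories, produces the polynomial rate $(1+t)^{-p}$, in line with the results of \cite{Bernou_2020, Bernou_2021, Bernou_2022}.

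The second, and technically most demanding, ingredient is the minorization condition: for each $R > 0$ there must exist $T_R, \varepsilon_R > 0$ and a probability measure $\nu_R$ on $G$ such that $S_{T_R} f \ge \varepsilon_R\, \nu_R\, \langle f \rangle$ for every $f \ge 0$ with $\|f\|_{m_\alpha} \le R \langle f \rangle$. My strategy is to iterate the mild formulation: discard the non-negative operator $\CC_+$ (keeping only the loss semigroup $e^{-\sigma t}$, bounded below via $\sigma_\infty$) and expand $K$ enough times that the composition carries a genuinely diffusive contribution. Each application of the diffuse part of $R$ -- controllable for \CLBC\ via a pointwise lower bound on $I_0$ on bounded regions, and from the Maxwellian factor in \eqref{eq:R_Maxwell} for \MBC\ -- produces a Gaussian floor in velocity, uniformly in $x \in \pO$ thanks to continuity of $\theta$ and compactness of $\partial \Omega$; a bounded number of bounces then spreads this floor over a macroscopic subset of $G$ via free transport.

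The main obstacle is precisely this minorization step. The degeneracy of $\CC$ prevents any mixing from being extracted from the collision operator, so the Doeblin lower bound must rely entirely on the geometry of billiard trajectories combined with the boundary kernel. The modified weight, with $\tau(x,v)$ replaced by $\tfrac{d(\Omega)}{c_4 |v|} - \tau(x,-v)$ and $c_4$ chosen so that $(1-c_4)^4 = 1 - \beta_0$, is designed to absorb the specular component of the Maxwell kernel within four bounces and to treat \CLBC\ and \MBC\ in a single framework; matching this algebraic choice with the drift inequality and controlling the Cercignani--Lampis kernel uniformly for arbitrary $(\rp, \rt) \in (0,1] \times (0,2)$ is where the bulk of the analytic effort concentrates.
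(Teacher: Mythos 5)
Your outline follows essentially the same route as the paper: a Lyapunov drift inequality for the weights $m_\alpha$ (with the decay supplied by the loss term $-\sigma_0\|\cdot\|_{m_\alpha}$ in the exponential case and by the transport term $-\alpha\|\cdot\|_{m_{\alpha-1}}$, coming from $v\cdot\nabla_x m_\alpha=-\alpha m_{\alpha-1}$, in the subgeometric case), combined with a Doeblin--Harris minorization obtained by discarding $\CC_+$ via the Duhamel formula and running the free-transport-with-stochastic-boundary argument of \cite{Bernou_2020, Bernou_2021}. Two points you pass over deserve flagging. First, the additive constant in the drift is not free: when one differentiates $\|S_tf\|_{m_\alpha}$ the boundary contribution does not cancel, and the paper must prove a separate flux-control lemma, namely
\begin{align*}
\int_0^T\int_{\{(x,v)\in\Sigma_+,\,|v|\le\Lambda\}}|v\cdot n_x|\,\gamma_+|S_sf|\,\d v\,\d\zeta(x)\,\d s\le C_\Lambda(1+T)\|f\|_{L^1},
\end{align*}
(together with a moment estimate on the Cercignani--Lampis kernel for large incoming speeds) to convert the boundary term into the harmless $K(1+T)\|f\|_{L^1}$; your sketch attributes the boundary control entirely to the algebraic choice of the weight, which only handles the specular part of \MBC. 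Second, both your drift inequalities are vacuous for exponents $p\le 1$ (the weight $m_{p-1}$ is bounded, and the paper's Lyapunov proposition is stated only for $\alpha\in(1,d)$), whereas the theorem claims all $p\in(0,d)$; the paper closes this range by an interpolation argument between $L^1_{m_{3/2},0}(G)$ and $L^1_0(G)$ using a bounded mass-subtracting projection, a step your proposal omits. Neither issue invalidates the strategy, but both are needed to reach the statement as written.
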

	
	Three consequences can be drawn from this theorem, which form our main results.
	
	\begin{thm}
		\label{thm:main_2}
		Assume that Hypotheses \ref{hypo:boundary} and \ref{hypo:k} hold.
		\begin{enumerate}[i.]
			\item  There exists a unique $f_{\infty}$ such that for all $\epsilon \in (0, 1/2)$, $f_\infty \in L^1_{m_{d-\epsilon}}(G)$, $f_\infty \ge 0$, $\langle f_{\infty} \rangle = 1$ and
			\begin{align*}
				&v \cdot \nabla_x f_{\infty} = \mathcal{C}f_\infty, \qquad (x,v) \in G, \\
				&\gamma_- f_{\infty} = K \gamma_+ f_\infty, \qquad (x,v) \in \Sigma_-. 
			\end{align*}
			\item For all $p \in (0,d)$, there exists a constant $C > 0$ such that for all $t \ge 0$, for all $f \in L^1_{m_p}(G)$ with $f \ge 0$ and $\langle f \rangle = 1$, 
			\begin{align}
				\label{eq:cvg_polynomial}
				\big\| S_t(f - f_{\infty})\|_{L^1} \le \frac{C}{(1+t)^p} \|f - f_\infty\|_{m_p}.
			\end{align}
			\item If, additionally, Hypothesis \ref{hypo:sigma_below} holds, for all $q \in (0,d)$, there exists two constants $C, \kappa > 0$ such that for all $t \ge 0$, for all $f \in L^1_{m_q}(G)$ with $f \ge 0$ and $\langle f \rangle = 1$, 
			\begin{align}
				\label{eq:cvg_expo}
				\big\| S_t(f - f_{\infty}) \big\|_{L^1} \le C e^{- \kappa t} \|f - f_{\infty}\|_{m_q}. 
			\end{align}
		\end{enumerate}
		
	\end{thm}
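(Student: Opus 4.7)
Items (ii) and (iii) of the theorem follow at once from Theorem \ref{thm:main_1} as soon as item (i) is in hand. Indeed, a non-negative $f \in L^1_{m_p}(G)$ with $\langle f \rangle = 1$ satisfies $\langle f \rangle = \langle f_\infty \rangle$ and $f - f_\infty \in L^1_{m_p}(G)$, because item (i) places $f_\infty$ in $L^1_{m_{d-\epsilon}}(G) \subset L^1_{m_p}(G)$ for every $p < d$. Applying \eqref{eq:polynomial_cvg_difference} with $g = f_\infty$ and using $S_t f_\infty = f_\infty$ yields \eqref{eq:cvg_polynomial}; under Hypothesis \ref{hypo:sigma_below}, \eqref{eq:exponential_cvg_difference} gives \eqref{eq:cvg_expo} in the same way. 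The whole difficulty is thus to produce $f_\infty$.

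For item (i), my plan is a standard Cauchy-sequence argument, which is the natural companion to the Harris-type contraction underlying Theorem \ref{thm:main_1}. The first ingredient is a Lyapunov bound: for every $\alpha \in (0,d)$ there exists $C_\alpha > 0$ such that
\[
\sup_{t \ge 0} \|S_t f\|_{m_\alpha} \le C_\alpha \|f\|_{m_\alpha}, \qquad \forall f \in L^1_{m_\alpha}(G),
\]
which is precisely the drift estimate feeding Harris' theorem, and is, by design, part of the machinery built to prove Theorem \ref{thm:main_1}. Fix $\epsilon_0 \in (0, 1/2)$ and any probability density $f_0 \in L^1_{m_{d-\epsilon_0}}(G)$, for instance a localized Maxwellian. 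Setting $p := d - \epsilon_0$, for $0 \le t \le s$ I write $S_s f_0 = S_t(S_{s-t} f_0)$ and apply \eqref{eq:polynomial_cvg_difference} to the pair of probability densities $(f_0, S_{s-t} f_0)$:
\[
\|S_t f_0 - S_s f_0\|_{L^1} \le \frac{C}{(t+1)^p} \|f_0 - S_{s-t} f_0\|_{m_p} \le \frac{C(1 + C_p)}{(t+1)^p} \|f_0\|_{m_p}.
\]
Thus $(S_t f_0)_{t \ge 0}$ is Cauchy in $L^1(G)$, and I define $f_\infty := \lim_{t \to \infty} S_t f_0$. Non-negativity and $\langle f_\infty \rangle = 1$ pass to the $L^1$ limit, and Fatou's lemma combined with the Lyapunov bound yields $f_\infty \in L^1_{m_{d-\epsilon}}(G)$ for every $\epsilon \in (0,1/2)$. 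Invariance $S_s f_\infty = f_\infty$ follows from the $L^1$-continuity of $S_s$ together with the semigroup property, and this invariance is then translated into the stationary equation on $G$ and the boundary condition on $\Sigma_-$ via the well-posedness result (Theorem \ref{thm:well_posed_trace}).

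Uniqueness of $f_\infty$ is handled by applying \eqref{eq:polynomial_cvg_difference} to two candidate stationary probability densities $f_\infty, \tilde f_\infty \in L^1_{m_p}(G)$ for some $p \in (0, d)$: the left-hand side equals $\|f_\infty - \tilde f_\infty\|_{L^1}$ for every $t$, while the right-hand side tends to zero, forcing $f_\infty = \tilde f_\infty$. The main obstacle in this plan is securing the Lyapunov bound with exponent $\alpha$ arbitrarily close to the critical threshold $d$: since $m_\alpha$ grows like $|v|^{2\alpha \delta}$ at large velocities and encodes a shifted boundary travel-time near $\partial \Omega$, producing an estimate $\|S_t f\|_{m_\alpha} \le C_\alpha \|f\|_{m_\alpha}$ uniformly in $t$ requires a delicate interplay between Hypothesis \ref{hypo:k}, the total cross-section $\sigma$, and the boundary kernel $R$. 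This is precisely where the distinction between the \MBC\ and \CLBC\ cases, and the role of the accommodation coefficients $\rp, \rt, \beta$, enter through the parameter $c_4$ in the definition \eqref{eq:def_m_alpha} of $m_\alpha$; once this drift estimate is in place (for the same reasons it is needed in Theorem \ref{thm:main_1}), the rest of the argument is bookkeeping.
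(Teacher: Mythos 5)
Your reduction of items \emph{ii.} and \emph{iii.} to Theorem \ref{thm:main_1}, and your uniqueness argument, are fine and match the paper. The gap is in the existence part: the ``Lyapunov bound'' $\sup_{t\ge 0}\|S_t f\|_{m_\alpha}\le C_\alpha\|f\|_{m_\alpha}$ that you invoke is \emph{not} what the drift estimate of this paper provides. Proposition \ref{prop:lyapunov} gives
\[
\|S_T f\|_{m_\alpha}+\alpha\int_0^T\|S_s f\|_{m_{\alpha-1}}\,\d s\le\|f\|_{m_\alpha}+K(1+T)\|f\|_{L^1},
\]
whose right-hand side grows linearly in $T$; nothing in the paper upgrades this to a uniform-in-time propagation of the $m_\alpha$-norm, and such an upgrade is not obvious (it would require the dissipated term $\alpha\int_0^T\|S_sf\|_{m_{\alpha-1}}\,\d s$ to absorb $KT\|f\|_{L^1}$, i.e.\ a comparison between $K$ and $\alpha\inf m_{\alpha-1}$ that is not available). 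This breaks two steps of your plan. First, your Cauchy estimate bounds $\|f_0-S_{s-t}f_0\|_{m_p}$ by $(1+C_p)\|f_0\|_{m_p}$ uniformly over $s\ge t$; with the actual Lyapunov inequality the best available bound is $2\|f_0\|_{m_p}+K(1+s-t)\|f_0\|_{L^1}$, and the resulting quantity $C(1+s-t)/(1+t)^p$ is unbounded on $\{s\ge t\}$ (take $s=t+t^{p+1}$), so $(S_tf_0)_{t\ge0}$ is not shown to be Cauchy. Second, your Fatou argument for $f_\infty\in L^1_{m_{d-\epsilon}}(G)$ also rests on the same unproven uniform bound.

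Both defects are repairable, and the repair is essentially what the paper does. Work along the discrete times $nT$ for a fixed $T>0$ and estimate the increments $f_h:=S_{(h+1)T}g-S_{hT}g=S_{hT}(S_Tg-g)$: only the finiteness of $\|S_Tg-g\|_{m_p}$ at the single time $T$ is needed, which \eqref{eq:Lyapunov_subgeom} does give, and $p>1$ makes $\sum_h(hT+1)^{-p}$ summable. Moreover, the paper runs this Cauchy argument directly in the weighted space $L^1_{m_\alpha}(G)$ (in the exponential case via the contraction of the equivalent norm $\vertiii{\cdot}_\mu$ from Lemma \ref{lemma:contraction_mu}, in the polynomial case as in the free-transport argument of \cite{Bernou_2021}), which yields $f_\infty\in L^1_{m_{d-\epsilon}}(G)$ by completeness of the weighted space rather than by Fatou, bypassing the uniform bound altogether. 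As written, your argument is not complete without either proving the uniform-in-time bound or switching to this discrete, weighted-space version.
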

	
	In Section \ref{sec:counter_ex}, we also assume that $f_\infty$ is uniformly bounded. In this setting, we present a counter-example showing that the exponential convergence can fail when Hypothesis \ref{hypo:sigma_below} does not hold. We also provide general exponential lower bound for the rate of convergence of \eqref{eq:main_pb}, as well as a polynomial lower bound for the case where $\sigma$ cancels on an open ball inside $\Omega$. 
	
	\begin{thm}
		\label{thm:lower_bounds}
		Assume Hypotheses \ref{hypo:boundary} and \ref{hypo:k}. 
		Let $f_\infty$ given by Theorem \ref{thm:main_2}, and suppose furthermore that $f_\infty \in L^{\infty}(G)$.
		Then,
		\begin{enumerate}
			\item for all $\alpha \in (0,d)$, the uniform decay rate $E(t)$ such that for all $f \in L^1_{m_\alpha}(G)$, with $f \ge 0$ and $\langle f \rangle = 1$, $t > 0$, 
			\begin{align*}
				\Big\| S_{t} f - f_{\infty}\|_{L^1} \le E(t) \|f-f_\infty\|_{m_\alpha} 
			\end{align*}
			satisfies $E(t) \ge C_{\alpha, \|f_\infty\|_{m_\alpha}} e^{-\sigma_\infty (1+\alpha) t}$ for all $t$ large enough;
			\item if $\sigma$ vanishes on an open ball of $\Omega$, for all $\alpha \in (0,d)$, there does not exists any constant $C, \kappa > 0$ such that for all $f \in L^1_{m_\alpha}(G)$ with $\langle f \rangle = 1$, for all $t > 0$,
			\begin{align}
				\label{eq:counter_ex}
				\Big\| S_{t} f - f_\infty \Big\|_{L^1} \le C e^{-\kappa t} \|f - f_\infty\|_{m_\alpha};
			\end{align}
			\item if $\sigma$ vanishes on an open ball of $\Omega$, for all $\alpha \in (0,d)$, the uniform decay rate $E(t)$ such that for all $f \in L^1_{m_\alpha}(G)$, with $f \ge 0$ and $\langle f \rangle = 1$, $t > 0$, 
			\begin{align*}
				\Big\| S_t f - f_{\infty}\|_{L^1} \le E(t) \|f-f_\infty\|_{m_\alpha} 
			\end{align*}
			satisfies $E(t) \ge C_{\alpha, \|f_\infty\|_{m_\alpha}} (t+1)^{-\alpha}$ for $t$ large enough.
		\end{enumerate}
	\end{thm}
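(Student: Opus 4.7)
I would handle parts (3) and (2) together (by the same construction) and then turn to (1), whose exponential lower bound is the genuinely new ingredient. All three rely on exhibiting perturbations $h = f - f_\infty$ with $\langle h \rangle = 0$, $|h| \le f_\infty/2$ (allowed thanks to $f_\infty \in L^{\infty}$), for which the ratio $\|S_t h\|_{L^1}/\|h\|_{m_\alpha}$ can be bounded from below explicitly.

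\textbf{Part (3).} Let $B = B(x_0, r) \subset \Omega$ be an open ball on which $\sigma \equiv 0$. Hypothesis \ref{hypo:k}(3) then gives $k(x,v,\cdot) \equiv 0$ on $B$, so $\CC$ vanishes on $B \times \R^d$. For each large $t$, set
\[
V_t := \bigl\{(x,v) \in G : x + sv \in B \text{ for all } s \in [-t, 0]\bigr\},
\]
and choose two disjoint non-negative bumps $\chi_t^{\pm}$ supported in $V_t \cap \{|v| \le r/(2t)\}$ with $\|\chi_t^{\pm}\|_{L^1} = \epsilon$ and $|\chi_t^{\pm}| \le f_\infty/2$; set $h_t := \chi_t^+ - \chi_t^-$ and $f_t := f_\infty + h_t$. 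Since $\CC$ vanishes on the support of $h_t$ and its characteristics remain inside $B$ up to time $t$, one has $S_t h_t(x,v) = h_t(x - tv, v)$, so $\|S_t f_t - f_\infty\|_{L^1} = \|h_t\|_{L^1} = 2\epsilon$. On the support of $h_t$, $|v| \le r/(2t)$ forces $m_\alpha(x,v) \sim (d(\Omega)/(c_4 |v|))^\alpha \le C t^\alpha$, yielding $\|h_t\|_{m_\alpha} \le C t^\alpha \epsilon$. Dividing gives the desired $c\, t^{-\alpha}$ lower bound.

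\textbf{Part (2)} is immediate: plugging the family $f_t$ into any supposed exponential estimate $\|S_t f - f_\infty\|_{L^1} \le C e^{-\kappa t}\|f - f_\infty\|_{m_\alpha}$ yields $c\, t^{-\alpha} \le C e^{-\kappa t}$, which fails for $t$ large.

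\textbf{Part (1).} Now I would use the \emph{mirror} construction: concentrate the perturbation on \emph{high} velocities to exploit the $|v|^{2\delta}$-term in $m_\alpha$. Fix $R \ge 1$, take $\chi_R^{\pm}$ disjoint non-negative bumps supported in a set where $|v| \in [R, 2R]$, with $\|\chi_R^{\pm}\|_{L^1} = \epsilon$ and $|\chi_R^{\pm}| \le f_\infty/2$; set $h_R := \chi_R^+ - \chi_R^-$. Then $\|h_R\|_{m_\alpha} \le C R^{2\delta\alpha}\epsilon$. To lower-bound $\|S_t h_R\|_{L^1}$, introduce the loss-only semigroup $L_t$ generated by $-v\cdot\nabla_x - \sigma(x)$ together with the boundary operator $K$, so that $S_t = L_t + \int_0^t L_{t-s}\CC_+ S_s\, \d s$. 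The pointwise identity $|L_t h|(x,v) = e^{-\int_0^t \sigma(X_s)\,\d s}|h(X_{-t}(x,v))|$ gives $|L_t h| \ge e^{-\sigma_\infty t}|h\circ \Phi_{-t}|$ along each characteristic; choosing the bumps so that their characteristic images remain in disjoint parts of phase space up to time $t$ (feasible since they start separated in $x$ and are transported rigidly by $L_t$), one gets $\|L_t h_R\|_{L^1} \ge c\, e^{-\sigma_\infty t}\epsilon$. Testing $S_t h_R$ against a carefully localized $\psi \in L^\infty$ supported where $L_t \chi_R^+$ dominates the Duhamel remainder, the same lower bound $\|S_t h_R\|_{L^1} \ge c\, e^{-\sigma_\infty t}\epsilon$ persists. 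Finally, balance by choosing $R = e^{\sigma_\infty t/(2\delta)}$ so that $R^{2\delta\alpha} = e^{\sigma_\infty \alpha t}$, giving $\|S_t h_R\|_{L^1}/\|h_R\|_{m_\alpha} \ge c\, e^{-\sigma_\infty(1+\alpha)t}$, which is the claimed lower bound on $E(t)$.

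\textbf{Main obstacle.} The technical heart is the lower bound on $\|S_t h_R\|_{L^1}$ in Part (1): since $h_R$ is sign-changing with zero mean, the $L^1$-contractivity of $S_t$ only yields upper bounds, and the naive positivity trick used in Part (3) does not apply. Quantifying the scattered remainder $\int_0^t L_{t-s}\CC_+ S_s h_R\, \d s$ against the test function $\psi$ is the delicate step, and is where the moment assumption Hypothesis \ref{hypo:k}(2) must be invoked to control how much mass the collision gain term transfers into or out of the high-velocity region.
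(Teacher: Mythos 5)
The decisive gap is in your Part (1). A particle starting with $|v|\in[R,2R]$ and $R=e^{\sigma_\infty t/(2\delta)}$ reaches $\pO$ within time $d(\Omega)/R\ll t$, so the ``rigid transport'' picture and the pointwise bound $|L_t h|\ge e^{-\sigma_\infty t}|h\circ\Phi_{-t}|$ are only valid up to the first wall collision; after that the stochastic boundary operator $K$ re-emits the mass with an order-one thermal velocity profile and mixes the two bumps, and nothing forces $\|S_t h_R\|_{L^1}\ge c\,e^{-\sigma_\infty t}\epsilon$ for your particular $h_R$ (the statement that the \emph{uniform} rate is $\ge e^{-\sigma_\infty(1+\alpha)t}$ does not require every datum to decay that slowly, so you must exhibit one that provably does). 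The test-function/moment argument you sketch does not engage with this boundary thermalization, which is the actual obstruction. The paper's mechanism is the opposite of yours: it concentrates the datum on \emph{small} velocities $|v|\le\epsilon=e^{-\sigma_\infty t}$ near the center of $\Omega$, so that no characteristic reaches $\pO$ before time $t$; it bounds $S_t f$ from below by the explicit solution $\Phi$ of the loss-only problem with absorbing boundary ($\gamma_-\Phi=0$), whose only attenuation is $e^{-\int_0^t\sigma}\ge e^{-\sigma_\infty t}$; and the large $m_\alpha$-norm of the datum comes from the singular factor $d(\Omega)/(c_4|v|)\sim\epsilon^{-1}$, giving $\|f-f_\infty\|_{m_\alpha}\lesssim\epsilon^{-\alpha}=e^{\sigma_\infty\alpha t}$, not from $|v|^{2\delta\alpha}$.

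Your Parts (2)--(3) are closer to workable but have two defects. First, the constraint $|\chi_t^{\pm}|\le f_\infty/2$ requires a \emph{lower} bound on $f_\infty$ on the chosen support, and only $f_\infty\in L^\infty(G)$ is assumed; $f_\infty$ may vanish on $B\times\{|v|\le r/(2t)\}$, in which case $f_t=f_\infty+h_t$ fails the positivity required in Part (3). (Part (2) imposes no sign condition on $f$, so there your construction does go through, and your observation that $k(x,\cdot,\cdot)\equiv 0$ on the ball forces $\CC$ to vanish identically there, making $S_t$ act as free transport on slow data supported in $B$, is correct and is a clean alternative to the paper's comparison with the absorbed semigroup.) Second, $|v|\le r/(2t)$ gives a lower bound on $d(\Omega)/(c_4|v|)$, not an upper one, so $m_\alpha\le Ct^\alpha$ only holds if you also impose $|v|\ge c/t$, i.e.\ support the bumps on a velocity annulus. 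The paper sidesteps both problems at once by taking $f$ to be a single normalized positive bump and estimating $\|S_t f-f_\infty\|_{L^1}\ge\int_G\big(\Phi(t,\cdot)-\|f_\infty\|_{L^\infty}\big)^+$, which uses only the upper bound on $f_\infty$; I would recommend adopting that device for Parts (1) and (3).
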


	\medskip

	Table \ref{tab:summary_easy_frame} below summarizes our findings for this specific framework. 
		
		\begin{table}[h]
			\centering
			\captionsetup{width=.9\linewidth}
			\begin{tabular}{|p{4cm}|m{3cm}|m{3cm}|}
				\hline
				Hypothesis & Lower bound & Upper bound \\
				\hline 
					$\sigma \equiv 0$ on a ball $B \subset \Omega$
				 & $(t+1)^{-\alpha}$ & $(t+1)^{-\alpha}$ \\
				
				without Assumption \ref{hypo:sigma_below} & $e^{-\sigma_\infty(1+\alpha) t}$ & $ (t+1)^{-\alpha}$  \\
			 
				under Assumption \ref{hypo:sigma_below} & $e^{-\sigma_\infty(1+\alpha) t}$ &$e^{-\kappa t}$ \\
				\hline
			\end{tabular}
			
			\medskip 
			
			\caption{Convergence rate $E(t)$ from $L^1_{m_\alpha}(G)$ to $L^1(G)$, for $\alpha \in (0,d)$ and $f_\infty$ in $L^\infty(G)$. Bounds are given up to a constant independent of time and initial data.}
			\label{tab:summary_easy_frame}
		\end{table}
%
%
%
%
%
%
	
	Before turning to the motivations and to our review of the existing literature, we make a few remarks regarding those results. 
	
	\begin{rmk}[Use of $L^1$ weighted spaces]
		Aoki and Golse \cite[Proposition 3.1]{Aoki_2011} showed the non-existence of a uniform rate of convergence in $L^1(G)$ for general $L^1(G)$ initial data in the free-transport case with \MBCM, which is compatible with Hypotheses \ref{hypo:boundary} and \ref{hypo:k}. The uniform decay is indeed obtained here for initial data in some \textbf{weighted} $L^1$ spaces instead.
	\end{rmk} 
	
	\begin{rmk}[Constructive constants]
		The use of deterministic Harris' theorems to study the rate of convergence towards the steady state yields explicit constants \cite{Canizo_2023}. Those however depend on the constants appearing in the two conditions from which the proof is derived: the Lyapunov inequality and the Doeblin-Harris condition. While, in the former, constants are transparent, the ones from the latter depend here in a complicated fashion of the domain considered, see Remark \ref{rmk:nu_constructive}.
		Note also that we crucially use the stochastic nature of our boundary conditions: in case \CLBC, as $(\rp,\rt)$ converges to $(0,0)$ or as $(\rp, \rt) \to (0,2)$, i.e. as the reflection mechanism tends to the specular or the bounce-back boundary condition (see below Subsection \ref{subsubsec:boundary}), the constants from the Lyapunov conditions explode, see Proposition \ref{prop:lyapunov} and its proof. Similarly, in case \MBC, as $\beta_0 \to 0$, i.e. as the reflection mechanism tends to the specular one, our weights construction fails, since we require $c_4 \in (0,1)$ with $(1-c_4)^4 = (1-\beta_0)$. Regarding the lower bounds from Theorem \ref{thm:lower_bounds}, the constants appearing in front of the convergence rates are also constructive, but depend on the generally unknown values $\|f_\infty\|_{m_\alpha}, \|f_\infty\|_\infty$. 
		\end{rmk}
		
		\begin{rmk}[About the boundedness hypothesis in Theorem \ref{thm:lower_bounds}] It is known for several boundary conditions considered (see \cite{Esposito_2013} for case \MBCM, \cite{Chen_CL_2020} for case \CLBC) that, in the case of small temperature variations at the wall, the steady state of the full Boltzmann equation exists, is unique in the class of sufficiently regular functions, and is bounded. The boundedness hypothesis from Theorem \ref{thm:lower_bounds} thus appears natural at least in those frameworks.
		\end{rmk}
	
	\begin{rmk}[About the connectedness assumption]
	We assume that $\Omega$ is connected for simplicity. The case where $\Omega$ has finitely many connected components can also be dealt with, by splitting the densities and the corresponding steady states on each of those components. Further extensions seem really involved.
		\end{rmk} 
	
	\begin{rmk}[About a Doeblin condition]
		It might be possible to derive the exponential convergence from $L^1(G)$ to $L^1(G)$ under the additional Hypothesis \ref{hypo:sigma_below}, for instance by showing that, in this setting, the semigroup satisfies a Doeblin condition (rather than what we call a Doeblin-Harris one): for some $T > 0$ and a non-negative measure $\nu \not \equiv 0$, for all $(x,v) \in G$ and $f \in L^1(G; \R_+)$, 
		\[ S_T f(x,v) \ge \nu(x,v) \int_G f(y,w) \, \d y \, \d w, \]
		which is to be compared with the statement of Theorem \ref{thm:Doeblin-Harris} which only gives an upper bound to a restricted integral. Such a strategy was successful in \cite{Evans_2019} for the study of the degenerate linear Boltzmann equation in the torus. This could upgrade very slightly our results, since we only obtain exponential convergence from $L^1_{m_\epsilon}(G)$ to $L^1(G)$ for any $\epsilon >0$. We found however difficult to adapt the argument of \cite{Evans_2019} to a framework including our boundary conditions.
		\end{rmk}
	
	\begin{rmk}[Absence of perturbative arguments]
		We emphasize that our proofs do not rely on any perturbative arguments. We can thus treat the whole spectrum of accommodation coefficients for case \CLBC, that is $(\rp, \rt)$ in $(0,1] \times (0,2)$, and, for case \MBC, $\beta \in (\beta_0, 1]$ for any $\beta_0 > 0$ fixed. Similarly, we only assume continuity and positivity of the temperature, without requiring small variations around a constant.  
		\end{rmk}

    \begin{rmk}[All $\kappa$-distributions with $\varpi > \frac12$ yield the same behavior]
    As observed through the probabilistic approach of~\cite{Bernou_2022}, the crucial feature necessary to derive the polynomial convergence with rate $(t+1)^{d-}$ of Theorems \ref{thm:main_1} and \ref{thm:main_2} in case \MBC \, with $\beta \equiv 1$ is that $M$ should be bounded. Here, we recover this necessary hypothesis together with the moment condition \eqref{eq:moment_M}, and extend the result by saying that boundedness of $M$ and of a moment of order $1+\varepsilon$ is enough to recover exponential convergence, assuming the latter is provided by the collisional kernel (see the proof of Proposition \ref{prop:lyapunov}). The shared boundedness and moment properties explain that all $\kappa$-distributions of the form \eqref{eq:def_kappa} with $\varpi > \frac12$ (including the limiting one \eqref{eq:def_M*}) yield similar behaviors.
    \end{rmk}

	\subsection{Context, previous results and motivations}
	
	The linear Boltzmann equation is fundamental in kinetic theory and statistical physics. It describes the behavior of a dilute gas of particles encountering collisions with some background \cite{Cercignani_1988, Dautray_1990, Dautray_1993}.  
	Applications of this model span a wide range of disciplines: in physics, it is used to investigate neutron transport \cite{Cercignani_2000}, quantum scattering \cite{Erdos_2000}  and semiconductor device modeling \cite{Markowich_1990}. The linear Boltzmann equation has been derived in several contexts, see \cite{Breteaux_2014} for the case of a particle interacting with a random field, \cite{Bodineau_2015} for a study of hard-spheres, representing gas molecules. 
	The degenerate linear Boltzmann equation is a generalized version, adapted for instance to the study of radiative transfer systems inside which different parts of the space may have different transparencies. Our model set inside a bounded domain with stochastic boundary conditions is also reminiscent of the one presented in \cite{Bal_2002} for the study of photon migration within the skull, with applications in imagining of tumors and cerebral oxygenation \cite{Arridge_1999, Arridge_1997}.  
	
	In the past few years, the study of the linear Boltzmann equation, and of the BGK model \cite{BGK_1954} (also called the BKW model \cite{Welander1954} in the physics literature) where $k(x,v,v') = M_1(v')$ with
	\begin{align}
		\label{eq:def_M_1}
		M_1(v) = \frac{e^{-\frac{|v|^2}{2}}}{(2\pi)^{\frac{d}{2}}}, \qquad v \in \R^d,
	\end{align} combined with some boundary conditions have drawn a lot of interest within the mathematical community. There are two main reasons for this:
	\begin{itemize}
		\item those models have some physical relevance, with several well-identified applications;
		\item they present strong mathematical challenges, due to the delicate interaction between the transport operator with boundary conditions and the collision operator. 
 	\end{itemize}
 	We develop those two aspects in the next paragraphs. 
 	
 	\subsubsection{Physical features and boundary conditions}
 	\label{subsubsec:boundary}
 	
 	We present some key facts, and refer to \cite{Bernou_2021, Cercignani_2000} for more details. 
 	
 	\medskip 
 	
 	When modeling a gas inside a bounded domain $\Omega$, several choices of boundary conditions at $\partial \Omega$ are at disposal. The most simple ones are: 
 	\begin{enumerate}
 		\item  the bounce-back boundary condition : for all $(t,x,v) \in \R_+ \times \Sigma_-$,
 		\begin{align}
 			\label{eq:bounce_back}
 			 f(t,x,v) = f(t,x,-v); \end{align}
 		\item the specular reflection: for all $(t,x,v) \in \R_+ \times \Sigma_-$,
 		\begin{align}
 			\label{eq:specular}
 			f(t,x,v) = f\big( t,x, \eta_x(v) \big). 
 		\end{align} 
 	\end{enumerate}
 	Those conditions are unable to render the stress exerted by the
 	gas on the wall, and for this reason, Maxwell~\cite[Appendix]{Maxwell_1879}  introduced the pure diffuse reflection: for all $(t,x,v) \in \R_+ \times \Sigma_-$, taking the temperature $\theta \equiv 1$ independent of $x$, 
 	\begin{align}
 		\label{eq:diffuse}
 		f(t,x,v) = \frac{1}{(2 \pi)^{\tfrac{d-1}{2}}} e^{-\frac{|v|^2}{2}} \int_{\Sigma_+^x} f(t,x,w) |w \cdot n_x| \, \d w. 
 	\end{align} 
 	As opposed to \eqref{eq:bounce_back} and \eqref{eq:specular}, there is no correlation between the incoming velocities of particles hitting the wall and their outgoing ones in \eqref{eq:diffuse}. A first possible correction is to consider instead the Maxwell boundary condition \MBCM, a convex combination between the pure diffuse reflection and the specular one.
 	
 	Introduced at the beginning of the 1970's by Cercignani and Lampis \cite{Cercignani_Lampis_1971}, condition \CLBC \space provides a more delicate way to modify \eqref{eq:diffuse} to obtain those correlations. Its superior accuracy over the aforementioned models was exhibited numerous times,  both from numerical computations performed in the 1980's, and from physical experiments \cite{Alexandrychev_CL_1986, Markelov_CL_1982, Pantazis_CL_2011, Sharipov_CL_2002}, see in particular the recent work of Yamaguchi et al.~\cite{Yamaguchi_2016}. This paper was followed by a theoretical derivation of the coefficients in the context of hard spheres from Nguyen et al. \cite{Nguyen_2020}, who showed that the accommodation coefficients are independent of the shape of the domain, depend on the gas species considered,  and can, for some of those, be very different from the values $(1,1)$ corresponding to \eqref{eq:diffuse}. For instance, in a setting controlling temperature variations and pressure, an estimation for He was given in \cite[Table II]{Nguyen_2020}, with values $(\rp, \rt) = (0.15, 0.8)$. It is thus important to obtain mathematical results for the whole spectrum $(0,1] \times (0,2)$ of accommodation coefficients.

   The boundary condition \MBCkappa \space generalizes the one used in the study of fractional diffusion by Cesbron-Mellet-Puel \cite{Cesbron_Mellet_Puel_2019} and interpolates between their model and the specular reflection considered by Cesbron \cite{Cesbron_2018}. The collision kernel used in those papers also fits our framework, and is relevant to the study of plasma~\cite{Summers_Thorne_1991}. Although closely linked, the derivation of fractional/anomalous diffusion limit is a more general problem than the asymptotic convergence issue investigated here: for this reason, we postpone our discussion of those references to Section \ref{subsec:heavy_tail}. 
 	
 	\subsubsection{Mathematical motivations and previous results}
 	
	Equation \eqref{eq:main_pb} combines a first-order transport dynamics with two subtle relaxation effects in the velocity variable:
	\begin{itemize}
		\item the degenerate collision mechanism;
		\item a stochastic boundary operator. 
	\end{itemize} 
	Several results are already known regarding the long-time behavior of this kind of model. 
	
	\medskip 
	
	Consider first the sole transport dynamics with boundary conditions. Aoki and Golse \cite{Aoki_2011} where the first to question whether the thermalisation effect at the wall alone was enough to produce a spectral gap. For the diffuse reflection \eqref{eq:diffuse}, they identify the lack of uniform convergence for $L^1(G)$ initial data, and proved a convergence rate of $(1+t)^{-1}$, from some weighted $L^1$ space to $L^1$. This result was improved up to the optimal rate $\frac{1}{(1+t)^{d-}}$ in several subsequent articles by Kuo, Liu and Tsai \cite{Kuo_2013, Kuo_2014} and Kuo \cite{Kuo_2015} in a radial domain, and by Bernou-Fournier \cite{Bernou_2022} and Bernou \cite{Bernou_2020} in a $C^2$ bounded one, and ultimately culminated in the treatment of the more general Cercignani-Lampis boundary condition \cite{Bernou_2021}, for which the same rate of convergence was obtained. The key outcome of those research is that stochastic boundary conditions (\MBC \space  and \CLBC) provide only a polynomial rate of convergence in the $L^1(G)$ distance: there is no spectral gap for those dynamics. 
	
	\medskip 
	
	Next, we turn to hypocoercive equations, that is, dynamics combining a relaxation in the velocity variable with a transport operator. Those have been heavily studied in the past twenty years, and we will restrain to the equations closest to our framework. 
	The BGK model was studied in the torus by Mouhot and Neumann \cite{Mouhot_2006} who proved the existence of a spectral gap in $H^1$ norm. This toroidal case was also investigated, along with the case of the whole space with a confinement potential, by Dolbeault-Mouhot-Schmeiser \cite{Dolbeault_2009, Dolbeault_2015}, who gave a beautiful, simple proof of exponential decay using $L^2$ hypocoercivity which applies to a whole range of linear operators, including the linear Boltzmann equation. Those articles are part of the growing literature regarding hypocoercivity, which in some sense started from the work of Desvillettes, Hérau, Nier, Mouhot and Villani \cite{Desvillettes_2001, Desvillettes_2005, Herau_2005, Herau_2004, Mouhot_Villani_Review_2013, Villani_2009} among others, and benefited from earlier approaches, in particular the high-order Sobolev energy method of Guo \cite{Guo_2002}. At last, the degenerate linear Boltzmann equation investigated in this paper was studied in great details in the toroidal setting. In the case where $v \in V$ with $V$ bounded from below and above, Bernard and Salvarini \cite{Bernard_2013b} obtained exponential convergence towards the equilibrium under a geometric control condition. They also built in \cite{Bernard_2013} a counter-example showing that exponential convergence is not true in general. Later, Han-Kwan and Léautaud \cite{Han_Kwan_2015} used tools from control theory to deal with the case $v \in \R^d$ with a confinement potential, and obtained conditions about the spatial behavior of $k$ under which exponential convergence to the steady state occurs. They also characterized the latter under some extra hypotheses on $k$. In some sense, our paper extends the results of \cite{Bernard_2013b} to the case where $x \in \Omega$, $v \in \R^d$ with (stochastic) boundary conditions.
	
	\medskip  
	
	 This paper uses deterministic strategies inspired from probabilistic methods. Those tools, namely Doeblin and Harris theorems, were already used by Ca\~nizo-Cao-Evans-Yoldas \cite{Canizo_2020} to derive convergence rates towards equilibrium for the relaxation operator and the linear Boltzmann equation, in the torus and in the whole space with a confinement potential, some forms of the latter leading to polynomial rates of convergence, rather than exponential ones. Evans and Moyano \cite{Evans_2019} also recently used Doeblin's theorem to derive quantitative exponential convergence of the degenerate linear Boltzmann equation in the torus.

	\medskip  
	
	To conclude this literature review, we focus on models involving both a hypocoercive structure for the equation and non-deterministic boundary conditions. For deterministic boundary conditions (specular or bounce-back), we simply quote, among others \cite{Duan_Landau_2020, Guo_2010, Kim_Specular_Convex_2017,  Kim_Specular_2018}. 
	For the diffuse reflection with constant temperature, Guo \cite{Guo_2010} obtained exponential convergence in some weighted $L^{\infty}_{x,v}$ space for the linearized Boltzmann equation when $\Omega$ is smooth and convex, using his famous $L^2-L^{\infty}$ approach. Briant \cite{Briant_2017} extended this result to more general weights. Guo-Briant \cite{Briant_Maxwell_2016} upgraded those findings to get explicit constants and handle \MBCM. Regarding those topics (including the treatment of the linearized Landau equation), we also mention the recent \cite{Bernou_2022b} based on an enriched version of the Dolbeault-Mouhot-Schmeiser approach to hypocoercivity \cite{Dolbeault_2009, Dolbeault_2015}. When the reflection at the wall is diffuse with small temperature variations, Esposito-Guo-Kim-Marra \cite{Esposito_2013} showed the existence of a steady state and gave an exponential result of convergence in $L^\infty$ norm, see also \cite{Duan_2019}, but virtually nothing is none outside this case. The study of condition \CLBC \space in those collisional contexts has been very sparse, with the notable exception of the work of Chen \cite{Chen_CL_2020}, who extended the results from \cite{Esposito_2013}, under again an assumption of small temperature variations and strong hypotheses on the accommodation coefficients, that must be close to $(1,1)$. 
	The recent article of Dietert et al. \cite{Dietert_2022} is the closest to our framework, as it considers degenerate linear equations (namely, the linear Boltzmann equation and the linear Fokker-Planck equation) with confinement mechanisms that include the case of the diffuse reflection at constant temperature \eqref{eq:diffuse}. Using trajectorial methods and tools from control theory, the authors give conditions under which exponential convergence towards the equilibrium is achieved, in some $L^2$ norm, with constructive rates. The approach allows to treat several models and confinement mechanisms in a unified way. 
    \dd 
	
	\subsection{Contributions}
	
	The $L^2$-hypocoercivity tools mentioned above require the knowledge of the equilibrium and some form of separation of variables for it, as the velocity distribution is used as a weight to tailor appropriate functional spaces. So far those methods have given limited insights about the asymptotic behavior of the solutions when the temperature varies at the boundary. This framework is however meaningful from a physical point of view in our context: considering degenerate models implies that the thermalization effects are different in various regions of space. Extending these features up to the boundary, which amounts to considering wall temperatures that also change with the position, is thus a very natural assumption. The linear Boltzmann equation with variable temperature at the boundary is also an interesting framework for the study of the Fourier law in the kinetic regime \cite{Esposito_2013}.
	
	\medskip

	In this paper, our main contributions apply to several stochastic boundary conditions (\CLBC, \MBCM \space and \MBCkappa) with no assumptions on the temperature besides boundedness and continuity, and in a general $C^2$ bounded domain. We obtain five main results: 
	\begin{enumerate}
		\item the existence and uniqueness of the steady state for the (degenerate) linear Boltzmann equation;
		\item an exponential rate of convergence towards this steady state, in $L^1(G)$, for the linear Boltzmann equation (i.e. $\sigma \equiv 1$);
		\item an exponential rate of convergence towards the steady state, in $L^1(G)$, for the degenerate linear Boltzmann equation under an additional control condition;
		\item a polynomial rate of convergence towards the steady state, in $L^1(G)$, for the degenerate linear Boltzmann equation without the additional control condition;
		\item a precise picture of the convergence, including lower bounds on the rates, under an additional boundedness assumption of the steady state that is known to hold for the full Boltzmann equation in the case of small temperature variations.
	\end{enumerate}
	
    In addition to those, we present in Section \ref{sec:applications} two further applications. 
    In Section \ref{subsec:applications_variyng} we present a linear relaxation model that can be seen as the counterpart to the degenerate linear BGK one in the case of varying temperature at the boundary. This model is very natural in the study of multi-species interaction combined with boundary effects, and we provide quantitative estimates of convergence in Corollary \ref{coroll:extended_BGK}. 
    In Section \ref{subsec:heavy_tail} the collision kernel is of the form $k(x,v,v') = \sigma(x) \mu_\varpi(v')$ for all $(x,v,v')$ in $ G \times \R^d$, where $\mu_\varpi$  behaves as $|v|^{-d - 2\varpi}$ as $|v| \to \infty$ -- that is, like a $\kappa$-distribution --  with $\varpi > 0$. Combining such choice with the corresponding boundary condition \MBCkappa \space is particularly relevant to the study of fractional/anomalous diffusion limits, for which our result can be read as a first step towards a generalization of the recent findings of \cite{Cesbron_2018, Cesbron_Mellet_Puel_2019}. 

	\medskip 
	
	In our opinion, our results regarding the convergence rates should be read as follows:
	\begin{itemize}
		\item Under the sole Hypotheses \ref{hypo:boundary} and \ref{hypo:k}, the decay is due to the interplay between the free-transport dynamics and the boundary condition, hence the rate of convergence can not beat the one obtained for the free-transport problem in \cite{Bernou_2020, Bernou_2021}. The stochastic nature of the boundary condition is key to the mixing in both space and velocity: in terms of trajectories this is best understood at the level of the Doeblin-Harris condition, Theorem \ref{thm:Doeblin-Harris}, where it is shown, roughly, that particles with controlled velocity and next boundary collision time span the whole phase space.
		\item Under the additional Hypothesis \ref{hypo:sigma_below}, the collision operator is sufficiently involved into the dynamics to provide further mixing, and therefore additional decay, eventually leading to some exponential convergence.
        \item In case \MBC \space, the two key hypotheses allowing the derivation of our results are the boundedness of $M$ and the existence of a $1^+$ moment \eqref{eq:moment_M}. As those properties are common to all $\kappa$-distributions \eqref{eq:def_kappa} with $2\varpi > 1$, including the limiting one \eqref{eq:def_M*}, such heavy-tailed reflection kernels share the same asymptotic behavior of the solution to \eqref{eq:main_pb} as the wall Maxwellian \eqref{eq:def_M*}.
	\end{itemize}

 	\subsection{Strategy and plan of the paper} \
 	
 	\medskip
 	
  Section \ref{sec:applications} presents four applications with given choices of $k$. We start with the linear BGK equation and a general linear Boltzmann model. The third application is the case of a heavy-tailed collisional kernel described above, and discussed in more details in Section \ref{subsec:heavy_tail}, also in connection with the question of deriving diffusion limits. Of particular interest is the setting of Section \ref{subsec:applications_variyng}, namely the interaction between two gas species, which corresponds to $k(x,v,v')= \tilde f_\infty(x,v')$, $(x,v,v') \in G \times \R^d$, for $\tilde f_{\infty}$ the steady state of the full Boltzmann equation set inside the domain with variable boundary temperature. This provides a relaxation model which is more relevant than the usual linear BGK one for the case where the temperature varies both inside the domain and at the boundary.
  
  \medskip 
 	
 	Ultimately, this paper relies on a deterministic Doeblin-Harris type argument, in the spirit of \cite{Bernou_2020, Bernou_2021}, see also \cite{Canizo_2023} and the recent review \cite{Yoldas_2023}. The core of our strategy builds upon the structure (and known results) of the underlying free-transport operator. Under Hypotheses \ref{hypo:boundary} and \ref{hypo:k}, Problem \eqref{eq:main_pb} is a bounded perturbation of the two models studied in \cite{Bernou_2020} and \cite{Bernou_2021}. This is the key argument providing our well-posedness result and important features of the trace, in Section \ref{sec:well-posed}. 
 	
 	\medskip 
 	
 	Section \ref{sec:Lyapunov} is devoted to our derivation of the Lyapunov conditions. The main point is as follows: when differentiating $\|S_t f\|_{m_\alpha}$ for some $f \in L^1_{m_\alpha}(G)$, $t \ge 0$ and $\alpha \in (1,d)$, one obtains
 	\begin{align}
 		\label{eq:lyapunov_q}
 		\frac{d}{dt} \|S_t f\|_{m_\alpha} \le Q_\alpha(S_t f) - \int_G v \cdot \nabla_x\big(m_\alpha\big) |S_t f| \, \d v \, \d x - \int_G \sigma(x) |S_t f| \, m_\alpha \, \d v \, \d x, 
 	\end{align} 	
 	where $Q_\alpha(S_t f)$ represents the sum of the boundary and gain terms. 
 	One shows the equality $-v \cdot \nabla_x m_\alpha = - \alpha \, m_{\alpha - 1}$ on $G$ by construction of the weights. Under Hypotheses \ref{hypo:boundary} and \ref{hypo:k} we just ignore the last term on the right-hand-side (r.h.s.) of \eqref{eq:lyapunov_q} and the decay of the norm is given by the term $-\alpha \|S_t f\|_{m_{\alpha-1}}$ originating from the free-transport dynamics rather than from the collision operator. Under the additional Hypothesis \ref{hypo:sigma_below}, we ignore this term in $m_{\alpha - 1}$ and get the decay from the loss term of the collision operator, in the form of $- \sigma_0 \|S_t f\|_{m_\alpha}$. The treatment of the boundary terms in $Q_\alpha$ within \eqref{eq:lyapunov_q} is a delicate point, for which we adapt the previous strategies introduced in \cite{Bernou_2020, Bernou_2021}. We integrate \eqref{eq:lyapunov_q} on $[0,T]$, $T > 0$, as one can show that integrated boundary fluxes of $f$ are controlled by $C(1+T)\|f\|_{L^1}$ for both boundary conditions. 
	We conclude that for all $\alpha \in (1,d)$, there exist $K_1, K_2 > 0$ two constants such that:
 	 \begin{itemize}
 	 	\item under Hypotheses \ref{hypo:boundary} and \ref{hypo:k}, for all $T > 0$, for all $f \in L^1_{m_\alpha}(G)$,
 	 		\begin{align}
 	 			\label{eq:Lyapu_poly_Intro}
 	 		\|S_T f\|_{m_\alpha} + \alpha \int_0^T  \|S_s f\|_{m_{\alpha-1}} \, \d s \le \|f\|_{m_\alpha} +  K(1+T) \|f\|_{L^1}.
 	 	\end{align}
 	 	\item If Hypothesis \ref{hypo:sigma_below} also holds, for all $T > 0$, all $f \in L^1_{m_{\alpha}}(G)$,
 	 	\begin{align}
 	 		\label{eq:Lyapu_expo_Intro}
 	 		\|S_T f\|_{m_{\alpha}} + \sigma_0 \int_0^T  \|S_s f\|_{m_{\alpha}} \, \d s \le \|f\|_{m_{\alpha}} +  K_2(1+T) \|f\|_{L^1}.
 	 	\end{align}
 	 \end{itemize} 
 	 
 	 \dd 
 	
 	\medskip

 	Section \ref{sec:Doeblin} focuses first on the Doeblin-Harris condition in Subsection \ref{subsec:Doeblin}. There, the Duhamel formula \eqref{eq:Duhamel} renders very concretely our use of the free-transport dynamics. Indeed, we first show that, for all $(t,x,v) \in \R_+ \times G$,
 	\begin{align*}
 		S_t f(x,v) \ge \mathbf{1}_{\{\tau(x,-v) \le t\}} e^{-\int_0^{\tau(x,-v)} \sigma(x-sv) \, \d s} S_{t - \tau(x,-v)} f \big(x-\tau(x,-v)v, v\big),
 	\end{align*}
 	 which allows us to ignore the gain collision mechanism (we only need some boundedness of $\sigma$) to derive the minoration condition: for all $\Lambda$ large enough, there exist $T(\Lambda) > 0$ and a non-negative measure $\nu \not \equiv 0$ on $G$ such that for all $(x,v) \in G$, for all $f_0 \in L^1(G)$, $f_0 \ge 0$, 
 	 \begin{align}
 	 	\label{eq:DH_intro}
 	 	S_{T(\Lambda)} f_0(x,v) \ge \nu(x,v) \int_{\{(y,w) \in G:\,  m_1(y,w) \le \Lambda \}} f_0(y,w) \, \d y \, \d w.
 	 \end{align} 
 	
 	\medskip 
 	
 	\dd 
 	Once conditions \eqref{eq:Lyapu_poly_Intro}-\eqref{eq:Lyapu_expo_Intro} and \eqref{eq:DH_intro} are established, we follow in Subsection \ref{subsec:proof_thm_1} a strategy reminiscent of the one in \cite{Bernou_2020, Bernou_2021, Canizo_2023}. Roughly, the core mechanism is as follows: inside the sublevel sets of the weight functions, \eqref{eq:DH_intro} provides some contraction. Outside of those sublevel sets, the Lyapunov conditions \eqref{eq:Lyapu_poly_Intro}-\eqref{eq:Lyapu_expo_Intro} tell us how fast the dynamics return to them. The speed of convergence can thus be read at this level. This strategy is in some sense analogous to a probabilistic coupling -- one such for the free-transport dynamics is performed in \cite{Bernou_2022} -- but relying on the framework introduced by Hairer-Mattingly \cite{Hairer_2016, Hairer_2011} and refined by Ca\~nizo-Mischler \cite{Canizo_2023} allows to escape the corresponding cumbersome construction -- especially in models like the ones investigated here, whose probabilistic writing would involve several sources of randomness --  by playing with weighted norms instead.  
	 We obtain the existence and uniqueness of the steady state, and some rate of convergence towards it. This is one of the main strengths of Doeblin-Harris type arguments, particularly with respect to hypocoercivity methods, which makes them well-tailored for the study of models whose parameterization is more involved: the knowledge of the steady state is not required \textit{a priori}. 
	 
	 \dd 
 	
 	\medskip

 	Section \ref{sec:counter_ex} is devoted to the proof of Theorem \ref{thm:lower_bounds}. By building an appropriate initial data $f_\epsilon$ depending on some parameter $\epsilon \in (0,1)$, and by using a comparison principle with the solution of the  problem
 \begin{align*}
 	\left\{
 	\begin{array}{lll}
 		&\partial_t \Phi + v \cdot \nabla \Phi = - \sigma(x)\Phi \qquad &\mathrm{in } \quad \R_+ \times G, \\
 		&\gamma_- \Phi = 0 \qquad &\mathrm{on } \quad \R_+ \times \Sigma_-, \\
 		&\Phi_{| t = 0} = f_\epsilon, \qquad &\mathrm{in } \quad G, 
 	\end{array}
 	\right.
 \end{align*}
 whose solution is explicitly given by the method of characteristics, we derive a general inequality on the uniform convergence rate $E(t)$. We then draw conclusions by choosing appropriately $\epsilon$. This strategy is in part inspired from \cite{Aoki_2011}.

	\subsection{Notations}
	\label{subsec:notations}
	We write $\bar B$ for the closure of any set $B$. We denote by $C^1_c(E)$ and $C^\infty_c(E)$ the space of test functions, $C^1$ and $C^\infty_c$ with compact support, respectively, on $E$. We write $d \zeta(x)$ for the surface measure at $x \in \partial \Omega$. For a function $\phi$ on $(0,\infty) \times \bar{G}$, we denote $\gamma_{\pm} \phi$ its trace on $(0,\infty) \times \Sigma_{\pm}$, under the assumption that this object is well-defined. We write $W^{1,\infty}(\R^d; \R)$ for the space of functions $g$ admitting a weak derivative, $\nabla g$, such that both $g$ and $\nabla g$ belong to $L^\infty(\R^d)$. We write $\vertiii{H}_{A \to B}$ for the operator norm of $H$ acting between the two Banach spaces $A$ and $B$. 
	
	\noindent Throughout the paper, $0 < \delta < \frac{\delta_k}{d} \wedge \frac{\varepsilon}{2d}$ is fixed, with $\delta_k$ given by Hypothesis \ref{hypo:k}, $\varepsilon$ given by \eqref{eq:moment_M} in case \MBC \space and equal to say, $d$ in case \CLBC. The constant $\chi_M$ is given by \eqref{eq:property_M}. We denote $d(\Omega) = \sup_{x,y \in \bar \Omega} |x-y|$ the diameter of $\Omega$, which is finite by assumption. For $h \in \R$, we define $\lfloor h \rfloor := \inf\{z \le h, z \in \mathbb{Z}\}$. In the whole paper, the positive constants $C$ and $\kappa$ depend only on the parameters (and not on the time nor on the initial data), and are allowed to change from line to line. We write subscripts when we wish to emphasize some dependency, e.g. $C_\alpha$ is a constant depending on $\alpha$ which can vary from line to line. We write $\sigma_\infty$ for the upper bound of $\sigma$, which is well-defined under Hypothesis \ref{hypo:k}. For two random variables $X, Y$ defined on a probability space with the same distribution, we write $X \overset{\mathcal{L}}{=} Y$. 
	
	\noindent For all $(x,v) \in \partial \Omega \times \R^d$,
	\[ \eta_x(v) := v - 2 (v\cdot n_x) n_x. \]
	In particular $\eta_x$ maps $\Sigma^x_{\pm}$ to $\Sigma^x_{\mp}$ and $v \to \eta_x(v)$ has Jacobian $1$. We sometimes have to distinguish between both boundary conditions, in which case we write \MBC \space and \CLBC \space to refer to the two settings of Hypothesis \ref{hypo:boundary}. 
	
	\section{Applications}
	\label{sec:applications}
	
	We detail in this section several collision kernels fitting into our framework. We begin with models associated to the kinetic theory of gas, before discussing an application to heavy-tails kernel, mostly found in the study of plasma \cite{Summers_Thorne_1991}. From physical considerations, the most natural boundary conditions for the first three paragraphs below are \CLBC \, and \MBCM: we shall only discuss those there. 
    
	\subsection{Linear relaxation kernel against the Maxwellian}
	
	We set, for all $(x,v,v') \in G \times \R^d$, $k(x,v,v') = \sigma(x) M_1(v')$ with $\sigma \in L^\infty(\Omega; \R_+)$ and $M_1$ given by \eqref{eq:def_M_1}. This corresponds to the so-called degenerate linear BGK model, whose collision operator is given, for $f \in L^1(G)$, $(x,v) \in G$, by
	\begin{align*}
		\mathcal C f(x,v) = \sigma(x) \Big( M_1(v) \int_{\R^d} f(x,v') \, \d v' - f(x,v) \Big). 
	\end{align*}

    	\subsection{Linear Boltzmann equation} We set, for all $(x,v,v') \in G \times \R^d$, \[ k(x,v,v') = \sigma(x) p(v,v'), \] with $\int_{\R^d} p(v,v') \, \d v' = P$, $P > 0$ constant, $\sup_{v \in \R^d} \int_{\R^d} p(v,v') |v'|^{2 \delta_k} \, \d v' < 0$ for some $\delta_k \in (0, \frac12)$, and $\sigma \in L^\infty(\Omega; \R_+)$. This generalizes the previous model and includes the case $\sigma \equiv 1$, which corresponds to the (non-degenerate) linear Boltzmann equation, and the case where $p(v,\cdot) \subset V$ with $V \subset \R^d$ bounded from above and below, as considered by Bernard-Salvarani \cite{Bernard_2013b} on the torus.

	\subsection{Relaxation against the steady state of a Boltzmann equation.}
	\label{subsec:applications_variyng}
	We present here a model which captures specific features of the case when the temperature varies at the boundary. The linear Boltzmann equation is a classical model for the interaction between two species of gas, say (A) and (B), when one of the species is more dense than the other \cite{Cercignani_1988}. Consider the following setting: species (A) is more dense, and has already reached a steady state inside the domain $\Omega$. Species (B) is less dense, to the point where inner collisions between particles of species (B) can be neglected in its evolution. 
	In the case \MBC \space with $M$ given by \eqref{eq:def_M*} and $\beta \equiv 1$ and small temperature variations, it is known \cite[Theorem 1.1]{Esposito_2013}, see also \cite{Guiraud_1972, Guiraud_1975},  that species (A), whose dynamics can be described by a full Boltzmann equation, admits a steady state, which depends on both $x$ and $v$ and that we denote $f_{A,\infty}$. Furthermore, 
	\begin{enumerate}
		\item $f_{A,\infty} \in L^\infty(G)$, 
		\item $\sup_{x \in \Omega} \int_{\R^d} f_{A,\infty}(x,v) |v|^{2\delta_k} \, \d v \lesssim 1$ for all $\delta_k \in (0, 1/2)$.
	\end{enumerate}
	
	Upon imposing more precise moment conditions on $f_{A,\infty}$, its uniqueness is again known \cite{Esposito_2013}. In case \CLBC, analogous results are available, provided that the temperature variations are small and that the accommodation coefficients are close to the values $(1,1)$ \cite[Corollary 2]{Chen_CL_2020}.
	
	In the study of space-dependent thermal exchanges, it is thus natural to study the model \eqref{eq:main_pb} for the density $f_{B}$ of the species (B) with the choice 
	\[ k(x,v,v') = f_{A,\infty}(x,v'), \qquad (x,v,v') \in G \times \R^d \] 
	(note that of course, $\int_{\R^d} k(x,v,v') \, \d v'= \sigma(x)$ is independent of $v$ in this framework) that is, to study the following evolution problem for $f_B$:
	\begin{align}
		\label{eq:pb_fB}
		\left\{
		\begin{array}{lll} 
			&\partial_t f_{B}(t,x,v) + v \cdot \nabla_x f_B(t,x,v) = f_{A,\infty}(x,v) \int_{\R^d}  f_B(t,x,v') \, \d v' \\
			&\qquad \qquad \qquad \qquad - f_B(t,x,v) \int_{\R^d} f_{A,\infty}(x,v') \, \d v', \qquad &(t,x,v) \in \R_+ \times G, \\
			&\gamma_- f_B = K \gamma_+ f_B, \qquad &(t,x,v) \in \R_+ \times \Sigma, \\
			&f_B(0,x,v) = f_0(x,v), \qquad &(x,v) \in G, 
		\end{array} 
		\right.
	\end{align} 
	for some initial data $f_0 \in L^1_{m_\alpha}(G)$, $\alpha \in (0,d)$. 
	It is clear from the conditions detailed above and satisfied by $f_{A,\infty}$ that this choice of $k$ satisfies Hypothesis \ref{hypo:k}.  
	
	\medskip 
	
	 Our results directly lead to the following corollary. 
	 
	 \begin{coroll}
	 	\label{coroll:extended_BGK}
	 	Under Hypothesis \ref{hypo:boundary}, the problem \eqref{eq:pb_fB} is well-posed. We write $(S_{B,t})_{t \ge 0}$ for the associated $\mathcal{C}_0$-stochastic semigroup given by Theorem \ref{thm:well_posed_trace}. 
	 	
	 	\begin{enumerate}
	 	
	 	\item There exists a unique steady state for the problem \eqref{eq:pb_fB}, $f_{\infty,B}$ such that for all $\alpha \in (0,d)$, $f_{\infty,B}$ in $L^1_{m_\alpha}(G)$, $f_{\infty,B} \ge 0$ and $\langle f_{\infty,B} \rangle = 1$. 
	 	
	 	\item For all $p \in (0,d)$, there exists a constant $C > 0$ such that for all $t \ge 0$, for all $f \in L^1_{m_p}(G)$ with $f \ge 0$ and $\langle f \rangle = 1$, 
	 	\begin{align*}
	 		\big\| S_{B,t}(f-f_{\infty,B}) \big\|_{L^1} \le \frac{C}{(1+t)^p} \big\|f - f_{\infty,B}\big\|_{m_p}. 
	 	\end{align*}
	 	
	 	\item Assume that $f_{A,\infty}$ is continuous in $G$, with, for all $x \in \Omega$, $\int_{\R^d} f_{A,\infty}(x,v) \, \d v > 0$. Then for all $q \in (0,d)$, there exist two constants $C, \kappa > 0$ such that for all $t \ge 0$, all $f \in L^1_{m_q}(G)$ with $f \ge 0$ and $\langle f \rangle = 1$, 
	 	\begin{align*}
	 		\big\| S_{B,t}(f-f_{\infty,B}) \big\|_{L^1} \le  C e^{-\kappa_t} \big\|f - f_{\infty,B}\big\|_{m_q}. 
	 	\end{align*}
	 	\end{enumerate}
	 \end{coroll}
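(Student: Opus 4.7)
The plan is to reduce Corollary \ref{coroll:extended_BGK} to direct applications of Theorem \ref{thm:well_posed_trace} and Theorem \ref{thm:main_2}. The entire argument will consist of verifying that the particular choice $k(x,v,v') = f_{A,\infty}(x,v')$ fits into the framework of Hypothesis \ref{hypo:k}, and, for item (3), also into that of Hypothesis \ref{hypo:sigma_below}; the conclusions then follow by quoting the main theorems.

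First I would verify Hypothesis \ref{hypo:k}. Point (1) is immediate from $f_{A,\infty} \in L^\infty(G)$. For point (2), observe that
\[ \int_{\R^d} k(x,v,v') |v'|^{2\delta_k} \, \d v' = \int_{\R^d} f_{A,\infty}(x,v') |v'|^{2\delta_k} \, \d v' \le M_{\delta_k}, \]
for any fixed $\delta_k \in (0, 1/2)$, which is precisely property (2) of $f_{A,\infty}$ listed before \eqref{eq:pb_fB}. For point (3), since $k$ is independent of $v$, the quantity $\sigma(x) = \int_{\R^d} f_{A,\infty}(x,v') \, \d v'$ depends only on $x$; its $L^\infty(\Omega)$ bound follows by splitting the integral on $\{|v'| \le 1\}$ (controlled by $\|f_{A,\infty}\|_\infty$ times the volume of the unit ball) and on $\{|v'| > 1\}$ (controlled by $M_{\delta_k}$ since $1 \le |v'|^{2\delta_k}$ there). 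Once Hypothesis \ref{hypo:k} is in place, well-posedness follows from Theorem \ref{thm:well_posed_trace}, and items (1) and (2) of the corollary are exactly parts i.\ and ii.\ of Theorem \ref{thm:main_2} applied with this $k$.

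For item (3), the task reduces to deriving Hypothesis \ref{hypo:sigma_below} from the additional assumptions that $f_{A,\infty}$ is continuous on $G$ and that $\int_{\R^d} f_{A,\infty}(x,v) \, \d v > 0$ for every $x \in \Omega$. I would combine three ingredients: the pointwise continuity of $f_{A,\infty}$, the uniform $L^\infty$ bound, and the uniform tail estimate $\int_{|v'| > R} f_{A,\infty}(x,v') \, \d v' \le M_{\delta_k}/R^{2\delta_k}$ coming from point (2). A split-and-dominated-convergence argument then yields continuity of $\sigma$ on $\Omega$, and, together with $\sigma(x) > 0$ for all $x \in \Omega$ and the compactness of $\bar\Omega$, gives the uniform lower bound $\sigma \ge \sigma_0 > 0$ needed to invoke Theorem \ref{thm:main_2}.iii. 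The step I anticipate as most delicate is exactly this last one: extracting a uniform positive lower bound from mere pointwise positivity on the open domain $\Omega$, i.e. controlling the behavior of $\sigma$ up to $\partial\Omega$. Once this is settled, the remainder of the proof is a direct plug-in into the main theorems.
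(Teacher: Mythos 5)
Your proposal is correct and follows essentially the same route as the paper: the paper likewise treats the verification of Hypothesis \ref{hypo:k} as immediate from the two listed properties of $f_{A,\infty}$, quotes Theorem \ref{thm:well_posed_trace} and Theorem \ref{thm:main_2} for well-posedness and items (1)--(2), and for item (3) invokes exactly the compactness argument you describe to get $\inf_{x\in\Omega}\sigma(x)>0$ and hence Hypothesis \ref{hypo:sigma_below}. The step you flag as delicate (passing from pointwise positivity on the open set $\Omega$ to a uniform lower bound) is asserted in the paper with the same brief appeal to compactness and continuity, so you are not missing anything beyond what the paper itself supplies.
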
 
	 
	 \begin{proof}
	 	The well-posedness and the existence of the associated $C_0$-stochastic semigroup are given by Theorem \ref{thm:well_posed_trace}. Points \textit{(1)} and \textit{(2)} are given by Theorem \ref{thm:main_2}. Point \textit{(3)} follows from the fact that, by compactness, those hypotheses on $f_{A,\infty}$ imply $\inf_{x \in \Omega} \sigma(x) > 0$, so that Hypothesis \ref{hypo:sigma_below} is satisfied, and Point \textit{iii.} of Theorem \ref{thm:main_2} applies.  
	 \end{proof}
	 
	 It is worth noting that in case \MBCM \space with $\beta \equiv 1$, when $\Omega$ is convex, it is known that $f_{A,\infty}$ is continuous in $G$. A further refinement is provided in \cite[Theorem 1.2]{Esposito_2013} and shows that, at least in the case where the temperature variations are very small (that is $\delta \ll 1$ in the notations of \cite{Esposito_2013}), one should expect $\int_{\R^d} f_{A,\infty}(x,v) \, \d v > 0$ for all $x \in \Omega$. Point \textit{(3)} of Corollary \ref{coroll:extended_BGK} is thus relevant in this situation. 

     \subsection{Linear relaxation kernel with heavy tails}
\label{subsec:heavy_tail}
	
	For all $(x,v,v')$ in  $G \times \R^d$, we consider a collision kernel of the form $k(x,v,v') = \sigma(x) \tilde \mu_\varpi(v')$ with $\sigma \in L^\infty(\Omega; \R_+)$ and 
	\[ \mu_\varpi(v) = Z_{d,\varpi}^{-1} \mathcal{F}^{-1}\big(e^{-\frac{|\xi|^{2\varpi}}{2\varpi}} \big) \]
	where $Z_{d,\varpi} > 0$ is a constant so that $\int_{\R^d} \mu_{\varpi}(v) \mathrm{d} v = 1$ and where $\mathcal{F}^{-1}$ denotes the inverse Fourier transform. Away from the origin, the Fourier transform of $\mu_\varpi$ is smooth and rapidly decaying at infinity. In fact, the following more precise bounds are available, see \cite[Theorem 3.1]{Bogdan_2003}: for some constant $c_1 > 0$, for all $v \in \R^d$,
	\begin{align*}
	\frac{c_1^{-1}}{|v|^{d + 2\varpi} + 1} \le \mu_\varpi(v) \le \frac{c_1}{|v|^{d + 2\varpi} + 1}. 
	\end{align*}
    In this sense, this collision kernel can be seen as a generalized $\kappa$-distribution \eqref{eq:def_kappa}. 
	
	 The collision operator is given, for $f \in L^1(G)$, $(x,v) \in G$, by
	\begin{align}
	\label{eq:collision_fat_tail}
	\mathcal C f(x,v) = \sigma(x) \Big( \mu_\varpi(v) \int_{\R^d} f(x,v') \, \d v' - f(x,v) \Big). 
	\end{align}
	
	Clearly, this kernel fits Assumption \ref{hypo:k} with $\delta_k < \varpi$, as for such choice 
	\[ \int_{\R^d} \frac{|v|^{2 \delta_k}}{|v|^{d + 2\varpi} + 1} \, \mathrm{d}v < \infty, \]
	and our result thus apply. Notice that, while we are able to treat the whole physical range $\varpi > 0$ for the collision kernel, the reflection kernel is limited to the (still physical) range $\varpi > \frac12$. 
    
	Of particular interest is the case $\sigma \equiv 1$ of the scattering model with heavy-tailed collision kernel. Under a rescaling of the collision operator \eqref{eq:collision_fat_tail} with scale parameter (mean-free path) $\epsilon$, the (diffusion) limit equation as $\epsilon \to 0$ involves a fractional diffusion operator. While deriving such limits in a domain is an intricate question in general which goes beyond the scope of this paper, we notice that the scattering operator \eqref{eq:collision_fat_tail} was studied previously in this perspective, in the whole space without confinement, see~\cite{Bouin_2022, Jara_2009, Mellet_2010} and in a domain by Cesbron with the specular \cite{Cesbron_2018} reflection at the boundary and by Cesbron-Mellet-Puel with the diffuse boundary condition \cite{Cesbron_Mellet_Puel_2019}. In this context, as noted in this last paper, the choice of boundary condition at the kinetic level directly influences the limiting fractional diffusion obtained. The derivation in \cite{Cesbron_Mellet_Puel_2019} starts from a kinetic model in which the reflection at the boundary is given by a constant-temperature version of a $\kappa$-distribution, for which a qualitative result of convergence towards thermodynamic equilibrium is obtained. Our result with \MBCkappa \space and $\beta \in (0,1)$ provides a quantitative convergence when the boundary condition is an interpolation between the situation of \cite{Cesbron_2018} and \cite{Cesbron_Mellet_Puel_2019}, and as such, a potential starting point for the investigation of the associated limit fractional diffusion model.

We note that a similar problem is the one of linking the discrete kinetic Fokker-Planck equation with its diffusion limit, see~\cite{Mischler_2017}. For the latter, analytic approaches based on Harris-type theorems (which are probabilistic in nature) have recently emerged, see e.g.~\cite{Canizo_2024} and provide a nice bridge between probabilistic approaches used for other models -- mainly based on couplings and properties of the Brownian motion, see~\cite{Fournier_2020, Fournier_2021, Jara_2009} and the references therein -- and PDE techniques (based on entropy or spectral analysis) found e.g. in~\cite{Bouin_2022, Mischler_2017}. 
	
	\section{Setting, well-posedness and trace theory}
	\label{sec:well-posed}
	
	\subsection{Associated semigroup}

	We gather our well-posedness result and some key elementary properties in the next theorem. Note that the boundary operator $K$ given by \eqref{eq:def_K} is non-negative and has norm $1$. In case \CLBC, it follows from the normalization property \eqref{eq:normalization_basic}. In case \MBC, it is easily obtained from \eqref{eq:property_M}: for all $(x,u) \in \Sigma_+$,
	\begin{align}
		\label{eq:normalization_2}
		\int_{\Sigma_-^x} R(u \to v; x) |v \cdot n_x| \, \d v &= \beta(x) \int_{\Sigma_-^x} M(x,v) |v \cdot n_x| \, \d v \nonumber  \\
		&\quad + (1-\beta(x)) \frac{|\eta_x(u) \cdot n_x|}{|u \cdot n_x|} = \beta(x) + (1-\beta(x)) = 1,
	\end{align}
	where we used $|u \cdot n_x| = |\eta_x(u) \cdot n_x|$. 
	
	\begin{thm}[Well-posedness, mass conservation, contraction property and trace equality]
		\label{thm:well_posed_trace}
Assume Hypotheses \ref{hypo:boundary} and \ref{hypo:k} hold. There exists a $C_0$-stochastic semigroup $(S_t)_{t \ge 0}$ associated to the problem \eqref{eq:main_pb} in $L^1(G)$. That is, for all $t \ge 0$,  $f \in L^1(G)$, $(S_t f)_{t \ge 0}$ is the unique solution in $L^{\infty}([0,\infty), L^1(G))$ of \eqref{eq:main_pb} with initial condition $f$. Moreover, for all $f \in L^1(G)$,
\begin{enumerate}[i.]
	\item for all $t \ge 0$, the trace of $S_t f$, denoted $\gamma_t f$, is well-defined, with $(\gamma_t f)_{t \ge 0}$ an element of the space $L^1_{\loc}([0, \infty) \times \Sigma, (v \cdot n_x)^2 \, \d v \, \d \zeta(x) \, \d t)$ such that the Green's formula is satisfied: 
	for all $t_0, t_1$ in $\R_+$ and $\varphi \in C^1_c(\R_+ \times \bar G)$ with $\varphi \equiv 0$ on $\R_+\times \Sigma_0$: 
	\begin{align*}
		&\int_{t_0}^{t_1} \int_G \big[ S_t f \big( \partial_t + v \cdot \nabla_x \big) \varphi  + \varphi \, \mathcal{C} f \big]  \, \d v \, \d x \, \d t \\
		&\qquad = \Big[ \int_G S_t f \,  \varphi \, \d v \, \d x \Big]_{t_0}^{t_1} + \int_{t_0}^{t_1} \int_{\Sigma} (\gamma_t f) (v \cdot n_x) \, \varphi \, \d v \, \d \zeta(x) \, \d t. 
	\end{align*}
	 We also have the renormalization property: for all $\beta \in W^{1,\infty}(\R)$, $t \ge 0$
	\[ \gamma_t \beta(f) = \beta (\gamma_t f).   \]
	\item  The mass is conserved: for all $t \ge 0$,
		\begin{align}
			\label{eq:mass_conservation}
			\int_{G} S_t f(x,v) \, \d x \, \d v = \int_G f(x,v) \, \d x \, \d v.
		\end{align}
	\item For all $t \ge 0$,
\begin{align}
	\label{eq:contraction_L_1}
	\|S_t f\|_{L^1} \le \|f\|_{L^1}.
\end{align}
\item  The semigroup $(S_t)_{t \ge 0}$ is non-negative. 
\end{enumerate}
	\end{thm}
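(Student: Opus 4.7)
The plan is to treat problem \eqref{eq:main_pb} as a bounded $L^1$-perturbation of the corresponding free-transport problem with the same boundary operator $K$, for which the analogous well-posedness and full trace theory have been established in \cite{Bernou_2020} (case \MBC) and \cite{Bernou_2021} (case \CLBC). Writing $(T_t)_{t \ge 0}$ for the associated free-transport $C_0$-stochastic semigroup on $L^1(G)$, Hypothesis \ref{hypo:k}(1),(3) ensures that $\CC$ is bounded on $L^1(G)$ with operator norm at most $2\sigma_\infty$: $\|\CC_- f\|_{L^1} \le \sigma_\infty \|f\|_{L^1}$ is immediate, while the bound $\|\CC_+ f\|_{L^1} \le \sigma_\infty \|f\|_{L^1}$ follows by Fubini--Tonelli and the identity $\int_{\R^d} k(x,v',v) \, \d v = \sigma(x)$ obtained from Hypothesis \ref{hypo:k}(3) after renaming dummy variables. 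The standard bounded perturbation theorem for $C_0$-semigroups then yields a unique $C_0$-semigroup $(S_t)_{t \ge 0}$ on $L^1(G)$, identified through the Duhamel formula
\[ S_t f = T_t f + \int_0^t T_{t-s} \CC S_s f \, \d s, \]
from which uniqueness in $L^\infty(\R_+; L^1(G))$ follows by Gronwall's lemma.

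\textbf{Trace and renormalization.} For point (i), observe that $t \mapsto S_t f$ solves $\partial_t g + v \cdot \nabla_x g = \CC g$ in the distributional sense, with right-hand side $\CC g \in L^1_{\loc}(\R_+; L^1(G))$. The trace construction of \cite{Bernou_2020, Bernou_2021} extends to transport equations with an $L^1$ source by a standard truncation/mollification procedure, producing $\gamma_t f$ in the prescribed $L^1_{\loc}$ space, and Green's formula is then obtained by distributional integration by parts on $[t_0,t_1] \times G$. The renormalization identity $\gamma_t \beta(f) = \beta(\gamma_t f)$ for $\beta \in W^{1,\infty}(\R)$ follows from the DiPerna--Lions renormalization theory for the transport operator $\partial_t + v \cdot \nabla_x$ with $L^1_{\loc}$ source: both sides solve the same transport equation with source $\beta'(f) \CC f \in L^1_{\loc}$, which by uniqueness and the matching initial and boundary traces must coincide.

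\textbf{Mass, non-negativity, contraction.} For mass conservation (ii), I apply Green's formula on $[0,t]$ to a smooth compactly supported approximation of $\varphi \equiv 1$ and pass to the limit: the volume contribution of $\CC f$ vanishes by \eqref{eq:sym_k}, while the boundary flux vanishes after splitting over $\Sigma_\pm$, substituting $\gamma_- f = K \gamma_+ f$, and invoking the normalizations \eqref{eq:normalization_basic} (case \CLBC) and \eqref{eq:normalization_2} (case \MBC). Non-negativity (iv) follows from the mild formulation along characteristics: absorbing $\CC_- f$ into a multiplicative exponential factor of the form $\exp(-\int \sigma)$ and using that $\CC_+$ and $K$ both preserve positivity, a monotone Picard iteration initialized at $f_0 \ge 0$ produces non-negative iterates, and the limit inherits this property. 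The $L^1$-contraction (iii) is then a direct consequence of linearity combined with (ii) and (iv): applying positivity to $\pm(|f|-f) \ge 0$ yields $|S_t f| \le S_t |f|$, so $\|S_t f\|_{L^1} \le \|S_t |f|\|_{L^1} = \||f|\|_{L^1} = \|f\|_{L^1}$ by mass conservation on the non-negative $|f|$.

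\textbf{Main obstacle.} The chief technical point is the trace theory up to the grazing set $\Sigma_0$ and the rigorous handling of the boundary fluxes in Green's formula; this is dealt with by importing the arguments developed for the free-transport case in \cite{Bernou_2020, Bernou_2021}, where the addition of a bounded $L^1$ source term does not alter the construction.
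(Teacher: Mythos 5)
Your proposal is correct, and for well-posedness (bounded perturbation of the free-transport semigroup plus Duhamel), the trace theory (deferring to the free-transport construction extended to a bounded $L^1$ source), and mass conservation (Green's formula with $\varphi\equiv 1$, cancellation of the collision terms, boundary normalization) it follows essentially the same route as the paper. The genuine difference is the order in which you establish points \textit{iii.} and \textit{iv.}: you prove non-negativity first, by rewriting the generator as transport-plus-absorption $-\sigma(x)$ perturbed by the positive operator $\CC_+$ and running a monotone Picard iteration whose terms are all non-negative, and you then deduce the $L^1$-contraction from positivity together with mass conservation via $|S_t f|\le S_t|f|$. The paper does the reverse: it proves the contraction directly by Kato's inequality, cancelling the gain and loss terms by Tonelli and controlling the boundary flux through the triangle inequality and the normalization properties \eqref{eq:normalization_basic} and \eqref{eq:normalization_2}, and then obtains positivity as a classical consequence of contraction and linearity. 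Both implications are standard and valid; your route has the merit of making the probabilistic (sub-Markovian) structure of the dynamics explicit through the Duhamel series with the killing factor $e^{-\int\sigma}$ — the same decomposition the paper later exploits in its Duhamel formula \eqref{eq:Duhamel} — while the paper's Kato-inequality argument avoids having to justify convergence and positivity of the iteration and localizes all the work in the boundary-flux estimate. The only place where your write-up is thinner than the underlying difficulty is the renormalization identity $\gamma_t\beta(f)=\beta(\gamma_t f)$: asserting that both sides solve the same equation ``with matching boundary traces'' presupposes precisely the trace renormalization one is trying to prove; like the paper, you ultimately need to import this from Mischler's trace theory rather than from interior DiPerna--Lions theory alone.
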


\begin{proof}
	
	\medskip 
	
	\textbf{Step 1: well-posedness.}
	As the boundary operator is conservative and stochastic, one can show that the associated free-transport problem, corresponding to \eqref{eq:main_pb} with $\mathcal{C} \equiv 0$, is governed by a $C_0$-stochastic semigroup $(T_t)_{t \ge 0}$, i.e. a non-negative, mass-conservative semigroup such that, for $f_0 \in L^1(G)$, for all $t \ge 0$, $T_t f_0 = f(t,\cdot)$ is the unique solution in $L^{\infty}([0,\infty); L^1(G))$ to the free-transport problem taken at time $t$. In case \CLBC, this was obtained by Cercignani and Lampis \cite{Cercignani_Lampis_1971}, along with the fact that $(S_t)_{t \ge 0}$ is a contraction semigroup, see also \cite{Bernou_2021}. For case \MBCM \space a proof can be found in \cite{Bernou_2020} which applies directly to the whole case \MBC. 
	
	\medskip 
	
	Turning to \eqref{eq:main_pb}, note that the corresponding operator is nothing but a perturbation of the free-transport equation, with either boundary conditions, by the operator $\mathcal{C}$. According to Pazy \cite[Chapter 3, Theorem 1.1]{Pazy_1983}, since $\mathcal{C}$ is linear and bounded in $L^1(G)$, which follows easily by Hypothesis \ref{hypo:k}, one can associate a $C_0$-stochastic semigroup $(S_t)_{t \ge 0}$ such that, for all $f_0 \in L^1(G)$, $t \ge 0$, $S_t f_0 = f(t,\cdot)$ is the unique solution in $L^{\infty}([0,\infty); L^1(G))$ to \eqref{eq:main_pb} at time $t$. 
	
	\medskip 
	
	\textbf{Step 2: proof of \textit{i.} and \textit{ii.}} Point \textit{i.} follows from a \textit{mutatis mutandis} adaptation of the detailed proof of Mischler \cite[Theorem 1 and Corollary 1]{Mischler_1999}. The latter deals with a sole source term on the r.h.s. \hspace{-.2cm} of the equation, but, as also pointed out by Dietert-Hérau-Hutridurga-Mouhot \cite[Appendix B]{Dietert_2022}, the result can be easily extended to bounded linear operator in $L^1$, as is the case of our operator $\mathcal{C}$ under Hypothesis \ref{hypo:k}. We refer the interested reader to \cite{Mischler_1999}. 
	
	\medskip 
	
	Once the trace is well-defined, point \textit{ii.} follows from a direct computation: 
	\begin{align*}
		\frac{\d}{\d t} \int_G S_t f(x,v) \, \d v \, \d x &= - \int_G v \cdot \nabla_x S_tf(x,v) \, \d v \, \d x + \int_G \Big[ \int_{\R^d} k(x,v',v) S_tf(x,v')\,  \d v' \Big] \d v \, \d x \\
		&\qquad - \int_G S_t f(x,v) \Big[ \int_{\R^d} k(x,v,v') \, \d v' \Big] \d v \, \d x \\
		&=-\int_\Sigma (v \cdot n_x) \gamma_t f(x,v) \, \d v \, \d \zeta(x) = 0,
		\end{align*}
		where the second equality follows from Green's formula for the boundary term, and using that the two collision terms cancel out, thanks to Fubini's theorem, using Hypothesis \ref{hypo:k} and that $S_t f \in L^1(G)$. The last equality follows from the normalization property \eqref{eq:normalization_basic} and \eqref{eq:normalization_2}. 
	
	\medskip

	\textbf{Step 3: contraction property.}
	Let $t \ge 0$. By Kato's inequality, one has
	\begin{align*}
		\frac{d}{dt} \int_G |S_t f| \, \d v \, \d x &\le - \int_{G} v \cdot \nabla_x |S_t f| \,  \d v \, \d x + \int_{G} \int_{\R^d} k(x,v,v') |S_t f|(v') \, \d v' \, \d v \, \d x \\
		&\qquad - \int_G \sigma(x) |S_t f| \, \d v \, \d x \\
		&= - \int_{\partial \Omega \times \R^d} \gamma |S_t f| \, (v \cdot n_x) \,  \d v \, \d \zeta(x) 
	\end{align*}
where we used Tonelli's theorem to prove that the last two terms on the right-hand-side of the first inequality cancel out. By \textit{i.}
\begin{align*}
	\int_{\partial \Omega \times \R^d} \gamma |S_t f| |v \cdot n_x| \, \d \zeta(x) \, \d v = \int_{\partial \Omega \times \R^d} |\gamma_t f| |v \cdot n_x| \, \d \zeta(x) \, \d v,
\end{align*}
and it follows from the boundary condition and the triangle inequality that
\begin{align*}
	-\int_{\Sigma} |\gamma_t f| (v \cdot n_x) \, \d v \, \d \zeta(x) &= - \int_{\Sigma_+} |\gamma_t f| |v \cdot n_x| \, \d v \, \d \zeta(x) \\
	&\qquad + \int_{\Sigma_-} \Big| \int_{\Sigma_+^x} R(u \to v; x) |u \cdot n_x| \gamma_t f(x,u)\,  \d u \Big| |v \cdot n_x| \, \d v \, \d \zeta(x) \\
	&\le -\int_{\Sigma_+} |\gamma_t f|(x,v) \, |v \cdot n_x| \, \d v \, \d \zeta(x) \\
	&\qquad + \int_{\Sigma_+}  |\gamma_t f|(x,u) \, |u \cdot n_x| \Big[ \int_{\R^d} R(u \to v; x) |v \cdot n_x| \, \d v \Big] \d u \, \d \zeta(x) \\
	&= 0,
\end{align*}
where we also used the normalization property to obtain the last equality.

\medskip 

\textbf{Step 4. Proof of (iv)} The positivity property is a classical consequence of the contraction in $L^1$ and of the linearity, see for instance \cite[Proof of Theorem 3, Step 4]{Bernou_2020}. 
\end{proof}

	\section{Lyapunov Conditions}
	\label{sec:Lyapunov}
	
	Recall the definition of the weights $m_{\alpha}$ from \eqref{eq:def_m_alpha}, $\alpha \in (0,d)$, and that $0 < \delta \le \frac{\delta_k}{d} \wedge \frac{\varepsilon}{2d}$ is fixed throughout the paper. The goal of this section is to prove the following Lyapunov conditions:
	\begin{prop} \
		\label{prop:lyapunov}
		
		\begin{enumerate}
		\item For $\alpha \in (1, d)$, under Hypotheses \ref{hypo:boundary} and \ref{hypo:k}, there exists a constant $K > 0$ such that for all $T > 0$, all $f \in L^1_{m_\alpha}(G)$,
		\begin{align}
			\label{eq:Lyapunov_subgeom}
			\|S_T f\|_{m_\alpha} + \alpha \int_0^T  \|S_s f\|_{m_{\alpha-1}} \, \d s \le \|f\|_{m_\alpha} +  K(1+T) \|f\|_{L^1}.
			\end{align}
		\item Under Hypothesis \ref{hypo:boundary}-\ref{hypo:sigma_below}, for all $\alpha \in (1,d)$, there exists a constant $K_2 > 0$ such that for all $T > 0$, all $f \in L^1_{m_{\alpha}}(G)$,
		\begin{align}
			\label{eq:Lyapunov_expo}
			\|S_T f\|_{m_{\alpha}} + \sigma_0 \int_0^T  \|S_s f\|_{m_{\alpha}} \, \d s \le \|f\|_{m_{\alpha}} +  K_2(1+T) \|f\|_{L^1}.
		\end{align}
	\end{enumerate}
\end{prop}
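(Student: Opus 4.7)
The plan is to follow the roadmap sketched in the introduction. Starting from Kato's inequality applied to $|S_tf|$ (justified by the trace theory of Theorem~\ref{thm:well_posed_trace} via a standard truncation of $|\cdot|$) and Green's formula in $x$, for $f\in L^1_{m_\alpha}(G)$ I obtain
\begin{align*}
\frac{d}{dt}\|S_tf\|_{m_\alpha} \le{} & -\int_G (v\cdot\nabla_x m_\alpha)\,|S_tf|\,\d v\,\d x -\int_G\sigma\,m_\alpha|S_tf|\,\d v\,\d x \\
& + G_\alpha(S_tf) + B_\alpha(S_tf),
\end{align*}
where $G_\alpha$ denotes the gain-term contribution of the collision operator and $B_\alpha$ the boundary flux. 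The core identity is $v\cdot\nabla_x m_\alpha = -\alpha\,m_{\alpha-1}$ on $G$: it follows from $v\cdot\nabla_x\tau(x,-v)=1$ (shifting $x$ by $\epsilon v$ increases the backward hitting time of $\partial\Omega$ by exactly $\epsilon$), and this is precisely the decay mechanism built into the weight through the term $-\tau(x,-v)$.

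For the gain term, Fubini and Hypothesis~\ref{hypo:k} should give $G_\alpha(S_tf) \le C\|S_tf\|_{L^1}$ once I show $\int_{\R^d} m_\alpha(x,v)\,k(x,v',v)\,\d v \le C$ uniformly in $(x,v')$. I plan to split the velocity integral at $|v|=1$: near zero, the weight behaves like $|v|^{-\alpha}$ and is integrable against $k\le k_\infty$ because $\alpha<d$; for large $|v|$, $m_\alpha(x,v) \le C(1+|v|^{2\delta\alpha})$, and since $\delta\alpha<\delta_k$ by our choice of $\delta$, Hypothesis~\ref{hypo:k}(2)--(3) controls the integral uniformly.

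The main obstacle is the boundary contribution, which by Green's formula and the boundary condition reads
\[ B_\alpha(h) = \int_{\Sigma_+} |\gamma h|(x,u)\,|u\cdot n_x|\left(\int_{\Sigma_-^x} m_\alpha(x,v)\,R(u\to v;x)\,|v\cdot n_x|\,\d v - m_\alpha(x,u)\right)\d u\,\d\zeta(x). \]
The structural observation underlying the whole weight construction is that at a boundary point, the weight of an incoming velocity is strictly larger than that of an outgoing one of equal magnitude: for $v\in\Sigma_-^x$, the definition of $\tau$ gives $\tau(x,-v)=0$, so the full term $d(\Omega)/(|v|c_4)$ is present; for $u\in\Sigma_+^x$, $\tau(x,-u)>0$ subtracts a positive quantity. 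In case \MBC, splitting $R=\beta M+(1-\beta)\delta_{\eta_x(\cdot)}/|v\cdot n_x|$, the Maxwellian part gives a uniformly bounded contribution on $\partial\Omega$ by continuity of $\theta$ and $\alpha<d$, while the specular part produces $(1-\beta(x))\,m_\alpha(x,\eta_x(u))$; this is absorbed into $m_\alpha(x,u)$ plus a uniformly bounded remainder thanks to the design $(1-c_4)^4=1-\beta_0$ combined with $\alpha<d\le 3<4$, which ensures $1-\beta_0\le (1-c_4)^\alpha$ in the worst-case small-$|u|$ asymptotic ratio $m_\alpha(x,\eta_x(u))/m_\alpha(x,u)\to (1-c_4)^{-\alpha}$. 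In case \CLBC, the explicit Gaussian/Bessel structure of $R$ together with the normalization \eqref{eq:normalization_basic} yields an analogous estimate, adapting the approach of \cite{Bernou_2021}. In both cases the outcome is $B_\alpha(S_tf) \le C\int_{\Sigma_+}|\gamma_tf|\,|u\cdot n_x|\,\d u\,\d\zeta(x)$.

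To conclude, I integrate the differential inequality on $[0,T]$ and use the a priori bound $\int_0^T\!\int_\Sigma|\gamma_sf|\,|v\cdot n_x|\,\d v\,\d\zeta(x)\,\d s\le C(1+T)\|f\|_{L^1}$, which is known for the free-transport problem from \cite{Bernou_2020, Bernou_2021} and transfers here since \eqref{eq:main_pb} is a bounded $L^1$-perturbation. For \eqref{eq:Lyapunov_subgeom}, I drop the nonnegative loss term and retain the transport-induced decay $-\alpha\|S_tf\|_{m_{\alpha-1}}$; for \eqref{eq:Lyapunov_expo}, under Hypothesis~\ref{hypo:sigma_below} I reverse the choice, discard the $m_{\alpha-1}$ decay, and keep $-\sigma_0\|S_tf\|_{m_\alpha}$ using $\sigma\ge\sigma_0$. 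Both inequalities then follow with constants $K$ and $K_2$ absorbing the boundary and collision contributions.
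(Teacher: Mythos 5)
Your overall architecture matches the paper's proof: the identity $v\cdot\nabla_x m_\alpha=-\alpha\,m_{\alpha-1}$, the uniform bound $\sup_{x,v}\int k(x,v,v')m_\alpha(x,v')\,\d v'<\infty$ via a split of the velocity integral (using $\alpha<d$ near $v'=0$ and $2\delta\alpha<2\delta_k$ at infinity), the choice of which negative term to keep in each of the two regimes, and, for case \MBC, the pointwise absorption $(1-\beta_0)m_\alpha(x,\eta_x(v))\le m_\alpha(x,v)$ built into the design $(1-c_4)^4=1-\beta_0$ with $\alpha<4$ (the paper proves this as an exact pointwise inequality, with no remainder, rather than only checking the small-$|v|$ asymptotic ratio as you do — you should verify it for all $|v|$, which is a one-line computation using $\tau(x,-v)\le d(\Omega)/|v|$).

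The genuine gap is in case \CLBC, at the final step. You reduce the boundary term to $C\int_{\Sigma_+}|\gamma_t f|\,|u\cdot n_x|\,\d u\,\d\zeta(x)$ and then invoke the a priori bound $\int_0^T\int_{\Sigma_+}|v\cdot n_x|\,\gamma_+|S_sf|\,\d v\,\d\zeta\,\d s\le C(1+T)\|f\|_{L^1}$ as ``known and transferring'' from the free-transport case. For the Cercignani--Lampis condition this full outgoing-flux bound is precisely what is \emph{not} available: the standard argument (multiplying by $v\cdot n_x$ and integrating over small velocities) only controls the flux restricted to $\{|u|\le\Lambda\}$, because the quantity $J_{u,x}=\int_{\{|v|\le 1\}}|\vp|^2R(u\to v;x)\,\d v$ degenerates as $|u|\to\infty$; this is exactly the content of Lemma \ref{lemma:control_flux}, item \CLBC, and the difficulty flagged in \cite[Remark 17]{Bernou_2021}. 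The paper's proof is structured to avoid ever needing the full flux: by Lemma \ref{lemma:Iux}, for $|u|\ge\Lambda$ the bracket $C_\alpha+I_{u,x,e^2,d(\Omega)/c_4}$ is nonpositive, so the large-velocity part of the boundary term can simply be discarded, leaving only the truncated flux $\int_{\{|u|\le\Lambda\}}|\up|\,\gamma_+|S_Tf|$, which \emph{is} controlled by $C_\Lambda(1+T)\|f\|_{L^1}$. Without this truncation step your argument does not close in case \CLBC.
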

	
	We will make use of both the function $\tau$, see \eqref{eq:def_tau}, and $q$ defined for all $(x,v) \in \bar G$ by
	\begin{align}
		\label{eq:def_q}
		q(x,v) := x + \tau(x,v) v. 
	\end{align}

	To derive the Lyapunov conditions, we first need to obtain some control of the flux. Using the general Cercignani-Lampis boundary condition rather than the diffuse one generates additional difficulty, see \cite[Remark 17]{Bernou_2021}. We start by deriving the following. 
	
	\begin{lemma}[Control of the flux]
		\label{lemma:control_flux}
		Under Hypotheses \ref{hypo:boundary} and \ref{hypo:k}, we have
		\begin{enumerate}
		\item[\CLBC] for all $\Lambda > 0$, there exists an explicit constant $C_{\Lambda} > 0$ s.t. for all $f \in L^1(G)$, $T > 0$,
		\begin{align}
			\label{eq:control_Flux_CL}
			\int_0^T \int_{\pO} \int_{\{v \cdot n_x > 0, |v| \le \Lambda\}} |\vp| \,\gamma_+ |S_s f|(x,v) \, \d v \, \d \zeta(x) \, \d s \le C_{\Lambda} (1 + T) \|f\|_{L^1};
		\end{align}
		\item[\MBC] there exists an explicit constant $C > 0$ such that for all $f \in L^1(G)$, $T > 0$, 
		\begin{align}
			\label{eq:control_flux_diffuse}
			\int_0^T \int_{\Sigma_+} |\vp| \,\gamma_+ |S_s f|(x,v) \, \d v \, \d \zeta(x) \, \d s \le C (1 + T) \|f\|_{L^1}. 
		\end{align}
	\end{enumerate}
	\end{lemma}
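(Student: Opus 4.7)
My plan is to apply Green's identity from Theorem \ref{thm:well_posed_trace}(i) to $|S_s f|$ on $[0,T]$, tested against a function chosen to isolate the outgoing flux on $\Sigma_+$. Since the semigroup is positive, $|S_s f| \le S_s|f|$, so splitting $f$ into positive and negative parts reduces the claim to $f \ge 0$, in which case $\gamma_s f \equiv \gamma_+ S_s f$ on $\Sigma_+$.

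The natural test function is $\varphi(x,v) = \tau(x,-v)\,\chi_\Lambda(|v|)$, where $\chi_\Lambda$ is a smooth cutoff to $\{|v| \le \Lambda\}$. By the definition of $\tau$, we have $\varphi \equiv 0$ on $\Sigma_-$, while a direct computation along characteristics gives $v \cdot \nabla_x \tau(x,-v) = 1$ in the bulk, so that $v \cdot \nabla_x \varphi = \chi_\Lambda(|v|)$ in $G$. Green's formula therefore produces on the boundary side only an integral over $\Sigma_+$ weighted by $\tau(x,-v)|v\cdot n_x|\chi_\Lambda(|v|)$, while the bulk term, the collision term, and the initial/final values of $\int_G S_s f \varphi$ are each controlled by a constant multiple of $(1+T)\|f\|_{L^1}$: mass conservation (Theorem \ref{thm:well_posed_trace}(ii)) handles the bulk transport contribution, Hypothesis \ref{hypo:k} gives the $L^1$-boundedness of $\mathcal{C}$ needed for the collision part, and $\|\varphi\|_\infty$ is finite thanks to the cutoff. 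This produces a $\tau$-weighted flux estimate.

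To upgrade this into the raw flux bound \eqref{eq:control_Flux_CL}, the challenge is the contribution of grazing trajectories, where $\tau(x,-v) \to 0$ while $|v\cdot n_x|\gamma_+ S_s f$ may be sizable. Here I exploit the stochastic structure of $K$: after any reflection at $x \in \partial \Omega$, the kernel $R(u \to v;x)$ spreads the outgoing velocity so that a positive proportion of emitted particles have forward flight time bounded below by some $c_\Lambda > 0$. For \CLBC, this comes from the explicit formula \eqref{eq:def_R}, the positivity of $I_0$, and the geometry of $\Omega$. For \MBC, the Gaussian profile $M(x,v)$ together with $\beta \ge \beta_0 > 0$ plays the same role, and moreover its Gaussian tails in $v$ make the cutoff $\chi_\Lambda$ unnecessary---this is what yields \eqref{eq:control_flux_diffuse} without any restriction on $|v|$. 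Re-expressing the flux at time $s + \tau(x,-v)$ in terms of the emission at time $s$ at the previous boundary interaction then trades the $\tau$-weight for an $O(1)$ factor, at the cost of an additional $(1+T)\|f\|_{L^1}$ remainder produced by a single application of the Lyapunov-type bound on the norm.

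The main obstacle is this final upgrade step for \CLBC. The kernel \eqref{eq:def_R} is subtle and its spread in velocity is driven by the accommodation coefficients $(\rp,\rt)$, degenerating as these approach the singular limits $(0,0)$ or $(0,2)$. Tracking the dependence of $C_\Lambda$ on $\Lambda$, $\theta$, $d(\Omega)$, and $(\rp,\rt)$---with no perturbative assumption on the coefficients---is where the bulk of the technical work must lie, consistent with the author's reference to Remark 17 of \cite{Bernou_2021}.
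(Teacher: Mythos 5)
Your overall diagnosis is right --- the crux is the grazing degeneracy, and the stochasticity of the boundary kernel must be used to average the degenerate weight away --- but the proof as proposed has two genuine gaps. First, the test function $\varphi(x,v)=\tau(x,-v)\chi_\Lambda(|v|)$ is not admissible in the Green's formula of Theorem \ref{thm:well_posed_trace}: $\tau$ is not $C^1$ on $\bar G$ (it need not even be continuous for a non-convex $C^2$ domain), and $\tau(x,-v)\le d(\Omega)/|v|$ blows up as $|v|\to 0$, so the cutoff at large $|v|$ does not make $\|\varphi\|_\infty$ finite and the terms $\int_G |S_tf|\,\varphi\,\d v\,\d x$ at times $0$ and $T$ are not controlled by $\|f\|_{L^1}$. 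Both points are repairable (regularize $\tau$, multiply by $|v|/d(\Omega)$), but they are not addressed. The paper sidesteps this entirely by testing against $(v\cdot n_x)\mathbf{1}_{\{|v|\le 1\}}$, with $n_x$ extended to a $W^{1,\infty}$ vector field, which yields a bound on $\int_0^T\int_{\{(x,v)\in\Sigma_-,\,|v|\le1\}}|\vp|^2\,\gamma_-|S_tf|$ --- a degenerately weighted flux on the \emph{incoming} side.

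Second, and more seriously, the upgrade from your $\tau$-weighted outgoing flux to the raw outgoing flux is only sketched, and the sketch is circular: you propose to absorb a remainder by ``a single application of the Lyapunov-type bound on the norm'', but the Lyapunov inequalities \eqref{eq:Lyapunov_subgeom}--\eqref{eq:Lyapunov_expo} are themselves proved \emph{using} this flux lemma, so they are not available here. The re-expression of $\gamma_+|S_sf|(x,v)$ through the previous boundary interaction would also require a characteristic representation of the trace, a boundary change of variables, and control of the collision gain term along the characteristic, none of which is carried out. The paper's mechanism is simpler and avoids all of this: since $\gamma_-|S_tf|(x,\cdot)=K\gamma_+|S_tf|(x,\cdot)$, Tonelli turns the controlled incoming quantity into $\int_{\Sigma_+}|\up|\,\gamma_+|S_tf|(x,u)\,J_{u,x}\,\d u\,\d\zeta(x)$ with $J_{u,x}=\int_{\{v\in\Sigma_-^x,\,|v|\le1\}}|\vp|^2\,R(u\to v;x)\,\d v$, and one only needs the \emph{lower} bound $J_{u,x}\ge c_\Lambda>0$, obtained by continuity and compactness of $\{(x,u)\in\Sigma_+:|u|\le\Lambda\}$. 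For \MBC\ the diffuse part of $R$ does not depend on $u$, whence no velocity restriction; this $u$-independence, rather than Gaussian tails in $v$, is why \eqref{eq:control_flux_diffuse} holds on all of $\Sigma_+$. Having the degenerate weight on the incoming side is exactly what makes the boundary operator applicable in one line; your construction places it on the outgoing side, which is why the hard step remains undone.
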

	
	\begin{proof}
		\textbf{Step 1: an inequality for a boundary term.} We have, by definition of $(S_t)_{t \ge 0}$, by linearity of \eqref{eq:main_pb} and by positivity of the semigroup, that
		\[ \partial_t |S_t f| + v \cdot \nabla_x |S_t f| = \mathcal{C} \big( |S_t f| \big), \qquad \text{a.e. in } [0,T] \times G. \]
		Recall that $x \mapsto n_x$ is a $W ^{1,\infty}(\Omega)$ map by hypothesis. Multiplying this equation by $(v \cdot n_x)$ and integrating on $[0,T] \times \Omega \times \{v \in \R^d, |v| \le 1\}$, we find
		\begin{align*}
			&\int_0^T \int_{\Omega} \int_{\{|v| \le 1\}} (v \cdot n_x) \, \Big( \partial_t + v \cdot \nabla_x \Big) |S_t f|(x,v) \, \d v \,  \d x \, \d t \\
			& \qquad = \int_0^T \int_{\Omega} \int_{\{|v| \le 1\}} (v \cdot n_x) \, \Big[ \int_{\R^d} k(x,v',v) \, |S_t f|(x,v') \, \d v' \Big] \, \d v \, \d x \, \d t \\
			&\qquad \qquad - \int_0^T \int_{\Omega} \int_{\{|v|\le 1\}} (v \cdot n_x) \, |S_t f|(x,v) \, \sigma(x) \, \d v \, \d x \, \d t
		\end{align*}
	where we used the definition of $\sigma$. 
		Integrating by parts in both time and space on the left-hand side, we find
		\begin{align}
			\label{eq:temp_small_flux}
			&\Big[ \int_{\Omega} \int_{\{|v| \le 1\}} (v \cdot n_x) \, |S_t f|(x,v)  \, \d v \, \d x \Big]_0^T  \nonumber \\
			&- \int_0^T \int_\Omega \int_{\{|v| \le 1\}} |S_t f| (x,v) \, v \cdot \nabla_x (v \cdot n_x) \, \d v \, \d x \, \d t \nonumber   \\
			&+ \int_0^T \int_{\pO} \int_{\{|v| \le 1\}} |v \cdot n_x|^2 \, \gamma |S_tf|(x,v) \,  \d v \, \d \zeta(x) \, \d t \nonumber  \\
			& \qquad = \int_0^T \int_{\Omega} \int_{\{|v| \le 1\}} (v \cdot n_x) \, \Big[ \int_{\R^d} k(x,v',v) \, |S_t f|(x,v') \, \d v' \Big]\, \d v \, \d x \, \d t \nonumber \\
			&\qquad \qquad - \int_0^T \int_{\Omega} \int_{\{|v|\le 1\}} (v \cdot n_x) \, |S_t f|(x,v) \, \sigma(x) \, \d v \, \d x \, \d t 
		\end{align}
		where we used that, according to Theorem \ref{thm:well_posed_trace}, point \textit{i.},  
		\begin{align}
			\label{eq:absolute_trace}
			|\gamma S_t f(x,v)| = \gamma |S_t f|(x,v) \qquad \text{ a.e. in } \big((0,\infty) \times \Sigma_+\big) \cup \big((0,\infty) \times \Sigma_-\big).
		\end{align}
		We note first that
		\begin{align}
			\label{eq:temp_collision_small_flux}
			&\Big| \int_0^T \int_{\Omega} \int_{\{|v| \le 1\}} (v \cdot n_x) \, \Big[ \int_{\R^d} k(x,v',v) \, |S_t f|(x,v') \, \d v' \Big] \d v \, \d x \, \d t \nonumber  \\
			&\quad  - \int_0^T \int_{\Omega} \int_{\{|v|\le 1\}} (v \cdot n_x) \, |S_t f|(x,v) \, \sigma(x) \, \d v \, \d x \, \d t \Big| \nonumber  \\
			&\qquad \le \big(k_{\infty} + \sigma_\infty \big) \int_0^T \|S_s f\|_{L^1} \, \d s,
		\end{align}
	where we used Tonelli's theorem and Hypothesis \ref{hypo:k}. 
	Isolating the integral on the boundary $\partial \Omega$ and throwing away the integral included in $\Sigma_+$ in \eqref{eq:temp_small_flux}, using that $x \mapsto n_x$ belongs to $W^{1,\infty}(\Omega)$, the triangle inequality and \eqref{eq:temp_collision_small_flux}, this leads to 
		\begin{align*}
			&\int_0^T \int_{\{ (x,v) \in \Sigma_-, |v| \le 1\}}  |\vp|^2 \,  \gamma_- |S_t f|(x,v) \, \d v \, \d \zeta(x) \, \d t\\
			 &\qquad \leq 2 \|f\|_{L^1} + C\Big( \|n_{\cdot}\|_{W^{1,\infty}} + k_{\infty} + \sigma_\infty  \Big) \int_0^T \|S_s f\|_{L^1} \, \d s \\
			&\qquad \le C (1+T) \|f\|_{L^1}, 
		\end{align*}
		where we used the $L^1$ contraction from Theorem \ref{thm:well_posed_trace}.
		
		\medskip 
		
		\textbf{Step 2: Conclusion in case \MBC.} 
		Using the boundary condition, Tonelli's theorem and the positivity
		\begin{align}
			\label{eq:temp_M}
			\int_0^T &\int_{\{ (x,v) \in \Sigma_-, |v| \le 1\}}  |\vp|^2 \,  \gamma_- |S_t f|(x,v) \, \d v\,  \d \zeta(x) \, \d t \nonumber \\
			&\ge \int_0^T \int_{\Sigma_+} \beta(x) |\up| \gamma_+ |S_t f|(x,u) \int_{\{v \in \Sigma^x_-, |v| \le 1\}} |\vp|^2 M(x,v) \, \d v\, \d u \, \d \zeta(x) \, \d t. 
		\end{align}
		Note that $x \mapsto \int_{\{v \in \Sigma_-^x, |v| \le 1\}} M(x,v) (\vp)^2 \, \d v$ is continuous and positive, since $x \mapsto M(x,v)$ and $x \mapsto n_x$ are continuous for all $v \in \R^d$. Since $\partial \Omega$ is compact, letting 
		\[ B_0= \min_{x \in \partial \Omega} \int_{\{v \in \Sigma_-^x, |v| \le 1\}} M(x,v) (\vp)^2 \, \d v > 0, \]
		we deduce from \eqref{eq:temp_M}, Step 1 and since $\beta(x) \ge \beta_0$ that 
		\begin{align*}
			\int_0^T &\int_{\Sigma_+} |\vp| \gamma_+ |S_t f|(x,v) \, \d v \, \d \zeta(x) \, \d t \le \frac{1}{B_0 \beta_0} C (1+T) \|f\|_{L^1}, 
		\end{align*}
		which concludes the proof of \eqref{eq:control_flux_diffuse}.
			
			\dd 
		\medskip

		\textbf{Step 3: Proof of \eqref{eq:control_Flux_CL}.} Using the boundary condition and Tonelli's theorem, 
		\begin{align*}
			\int_0^T &\int_{\{ (x,v) \in \Sigma_-, |v| \le 1\}}  |\vp|^2 \,  \gamma_- |S_t f|(x,v) \, \d v\,  \d \zeta(x) \, \d t \\
			&= \int_0^T \int_{\Sigma_+} |\up| \, \gamma_+ |S_t f|(x,u) \int_{\{v \in \Sigma^x_-, |v| \le 1\}} |\vp|^2 R(u \to v;x) \, \d v\, \d u \, \d \zeta(x) \, \d t
		\end{align*}
		and finally we obtain from Step 1
		\begin{align}
			\label{eq:before_J}
			\int_0^T  &\int_{\{(x,u) \in \Sigma_+, |u| \le \Lambda\}} |\up| \, \gamma_+ |S_t f|(x,u) \int_{\{v \in \Sigma_-^x, |v| \le 1\}} |\vp|^2 \, R(u\to v; x) \, \d v \, \d u \, \d \zeta(x) \, \d t \nonumber \\
			&\le \int_0^T \int_{\Sigma_+} |\up| \, \gamma_+ |S_t f|(x,u) \int_{\{v \in \Sigma^x_-, |v| \le 1\}} |\vp|^2 \, R(u \to v;x) \,  \d v \, \d u \, \d \zeta(x) \, \d t \nonumber \\
			&\leq C (1 + T) \|f\|_{L^1},
		\end{align}
		where we used that $\{ (x,u) \in \Sigma_+, |u| \le \Lambda \} \subset \Sigma_+$ and the positivity of the integrand. \dd 
		We claim that there exists $c_{\Lambda} > 0$ such that for all $(x,u) \in \Sigma_+$ with $|u| \le \Lambda$, 
		\[ J_{u,x} := \int_{\{v \in \Sigma_-^x, |v| \le 1\}} |\vp|^2 \, R(u\to v;x) \, \d v \ge c_{\Lambda}. \]
		Indeed, 
		\begin{align*}
			J_{u,x} =  &\int_{\{v \in \Sigma_-^x, |v| \le 1\}} \frac{|\vp|^2}{\theta(x) \rp (2 \pi \theta(x) \rt (2 - \rt))^{\frac{d-1}{2}}}  \exp \Big(-\frac{|\vp|^2}{2 \theta(x) \rp} -\frac{(1-\rp)|\up|^2}{2 \theta(x) \rp} \Big) \\
			& \quad \times I_0 \Big(\frac{ (1-\rp)^{\frac12} \up \cdot \vp}{ \theta(x) \rp} \Big) \exp \Big(-\frac{|\vt - (1-\rt) \ut|^2}{2 \theta(x) \rt (2 - \rt)} \Big) \, \d v,
		\end{align*} 
		and, since $x \mapsto n_x$ and $x \mapsto \theta(x)$ are continuous, $(x,u) \mapsto J_{u,x}$ is continuous with $J_{u,x} > 0$ on the compact set $\{(x,u) \in \Sigma_+, |u| \le \Lambda\}$.
		Therefore, there exists $c_{\Lambda} > 0$ such that for all $(x,u) \in \Sigma_+$ with  $|u| \le \Lambda$, 
		\begin{align*}
			J_{u,x} \ge c_{\Lambda}.
		\end{align*}
		Note that, for any given $\Lambda$, the value of $c_{\Lambda}$ can be computed explicitly.
		Inserting this into \eqref{eq:before_J}, we find 
		\begin{align*}
			c_{\Lambda} \int_0^T \int_{\{(x,v) \in \Sigma_+, |v| \le \Lambda\}} |\vp| \, \gamma_+ |S_t f|(x,v) \, \d v \, \d \zeta(x) \, \d t \le C (1 + T) \|f\|_{L^1}
		\end{align*}
		and the conclusion follows by setting $C_{\Lambda} = \frac{C}{c_{\lambda}} > 0$. 
	\end{proof}

	In the case \CLBC, we will also need the following result, whose proof in the case $\delta = \tfrac1{4}$, $\alpha \in (1,d+1)$ and $L_1 = 1$, $L_2 = d(\Omega)$ is given in \cite[Lemma 18]{Bernou_2021} and can be adapted directly to treat any small $\delta$ and any $L_1, L_2 > 0$. We also emphasize that the proof carries on as long as $\max((1-\rp), (1-\rt)^2) < 1$, which encompasses the case $\rp = 1, \rt \ne 1$ and $\rt = 1, \rp \ne 1$. 
	
	\begin{lemma}
		\label{lemma:Iux}
		Let $\tilde \delta \in (0,\tfrac12)$, $\alpha \in (1,d+1)$. Set, for $L_1, L_2 > 0$, $(x,u) \in \Sigma_+$,
		\begin{align*}
			I_{u,x,L_1,L_2} := \int_{ \Sigma_-^x} |\vp| \, \Big\{ \big(L_1 + L_2 + |v|^{2\tilde \delta}\big)^{\alpha} - \big(L_1 + |u|^{2\tilde \delta}\big)^{\alpha} \Big\} \, R(u \to v; x) \, \d v.
		\end{align*}  
		In case \CLBC, for any $L_1, L_2 > 0$, for all $P > 0$, there exists $\Lambda > 0$ such that for all $x \in \pO$, $u \in \Sigma^x_+$ with $|u| \ge \Lambda$,
		\begin{align}
			\label{eq:conclusion_lemma_flux_ineq} 
			I_{u,x,L_1,L_2} \le - P. 
		\end{align}
	\end{lemma}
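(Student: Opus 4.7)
The plan is to exploit the probabilistic reinterpretation of the boundary kernel: by the normalization identity \eqref{eq:normalization_basic}, the measure $R(u \to v;x)|v\cdot n_x|\,\d v$ is a probability measure on $\Sigma_-^x$. Writing $\EE_{u,x}$ for the corresponding expectation, one obtains
\begin{align*}
I_{u,x,L_1,L_2} = \EE_{u,x}\bigl[(L_1+L_2+|v|^{2\tilde\delta})^{\alpha}\bigr] - (L_1+|u|^{2\tilde\delta})^{\alpha}.
\end{align*}
The goal is to show that the first term grows at most like $c|u|^{2\tilde\delta\alpha}$ as $|u|\to\infty$, with an explicit constant $c<1$ depending only on $\rp,\rt,\tilde\delta,\alpha$, while the second term is equivalent to $|u|^{2\tilde\delta\alpha}$. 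The difference will then diverge to $-\infty$, uniformly in $x\in\pO$ by compactness.

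The heart of the proof is a sharp moment bound. Under $\EE_{u,x}$, the tangential component $\vt$ is Gaussian with mean $(1-\rt)\ut$ and covariance of order $\theta(x)\rt(2-\rt)$, while the modulus $|\vp|$ of the normal component follows a Rice distribution with scale $\sigma^2=\theta(x)\rp$ and location $\nu=(1-\rp)^{1/2}|u\cdot n_x|$; this last identification uses the integral representation \eqref{eq:defI0} of $I_0$ to rewrite the normal integral as a Gaussian in $\vp$ followed by an explicit angular change of variables. Explicit computation then yields, for any $\beta\in(0,\alpha]$,
\begin{align*}
\EE_{u,x}[|\vt|^{2\tilde\delta\beta}] \le C_{\tilde\delta,\beta}\bigl((1-\rt)^{2\tilde\delta\beta}|\ut|^{2\tilde\delta\beta} + \theta(x)^{\tilde\delta\beta}\bigr), \qquad \EE_{u,x}[|\vp|^{2\tilde\delta\beta}] \le C_{\tilde\delta,\beta}\bigl((1-\rp)^{\tilde\delta\beta}|\up|^{2\tilde\delta\beta} + \theta(x)^{\tilde\delta\beta}\bigr).
\end{align*}
Combining with the mean value inequality $(a+y)^\alpha \le y^\alpha + \alpha(a+y)^{\alpha-1}a$ (valid for $\alpha\ge 1$) applied with $a=L_1+L_2$ and $y=|v|^{2\tilde\delta}$, the subadditivity $(|\vp|^2+|\vt|^2)^{\tilde\delta\alpha}\le C_{\tilde\delta,\alpha}(|\vp|^{2\tilde\delta\alpha}+|\vt|^{2\tilde\delta\alpha})$, and iterated applications of $(a+b)^{\alpha-1}\le C_\alpha(a^{\alpha-1}+b^{\alpha-1})$, one arrives at
\begin{align*}
\EE_{u,x}\bigl[(L_1+L_2+|v|^{2\tilde\delta})^{\alpha}\bigr] \le \max\bigl((1-\rp)^{\tilde\delta\alpha},(1-\rt)^{2\tilde\delta\alpha}\bigr)|u|^{2\tilde\delta\alpha} + C_{L_1,L_2,\tilde\delta,\alpha,x}\bigl(1+|u|^{2\tilde\delta(\alpha-1)}\bigr).
\end{align*}

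Since Hypothesis~\ref{hypo:boundary} gives $\rp\in(0,1]$ and $\rt\in(0,2)$, both $(1-\rp)$ and $(1-\rt)^2$ lie in $[0,1)$, so $c:=\max((1-\rp)^{\tilde\delta\alpha},(1-\rt)^{2\tilde\delta\alpha})<1$. Using $(L_1+|u|^{2\tilde\delta})^\alpha\ge |u|^{2\tilde\delta\alpha}$, one concludes
\begin{align*}
I_{u,x,L_1,L_2} \le -(1-c)|u|^{2\tilde\delta\alpha} + C_{L_1,L_2,\tilde\delta,\alpha,x}\bigl(1+|u|^{2\tilde\delta(\alpha-1)}\bigr),
\end{align*}
which tends to $-\infty$ as $|u|\to\infty$. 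The $x$-dependent constants are controlled by $\sup_{x\in\pO}\theta(x)$ and $\inf_{x\in\pO}\theta(x)>0$, both finite by continuity of $\theta$ on the compact set $\pO$, which yields the uniformity in $x$ and thus \eqref{eq:conclusion_lemma_flux_ineq} for any prescribed $P>0$ by taking $\Lambda$ large enough. The main obstacle is the sharp moment computation for $|\vp|$, which requires careful manipulation of the Bessel factor via \eqref{eq:defI0}; this exact argument has been executed in \cite[Lemma 18]{Bernou_2021} for $\tilde\delta=1/4$, $L_1=1$, $L_2=d(\Omega)$, and the adaptation to general parameters amounts only to tracking the exponents, as the sole structural property used is $\max((1-\rp),(1-\rt)^2)<1$.
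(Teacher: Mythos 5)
Your overall route is the one the paper itself intends: the paper does not prove Lemma \ref{lemma:Iux} but defers to \cite[Lemma 18]{Bernou_2021}, whose argument is precisely the probabilistic one you describe (Rice law for $|\vp|$, Gaussian law for $\vt$, a moment comparison producing a leading coefficient $\max((1-\rp),(1-\rt)^2)^{\tilde\delta\alpha}<1$, and uniformity in $x$ from compactness of $\pO$ and continuity of $\theta$). So in strategy the proposal matches the intended proof.

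There is, however, one place where the inequalities as written do not close, and it is the crux of the lemma. Your intermediate moment bounds carry a multiplicative constant $C_{\tilde\delta,\beta}$ in front of $(1-\rt)^{2\tilde\delta\beta}|\ut|^{2\tilde\delta\beta}$ and $(1-\rp)^{\tilde\delta\beta}|\up|^{2\tilde\delta\beta}$; if $C_{\tilde\delta,\beta}>1$ these cannot yield your final display, whose leading coefficient must be \emph{exactly} $\max((1-\rp)^{\tilde\delta\alpha},(1-\rt)^{2\tilde\delta\alpha})$ for the conclusion $1-c>0$ to survive. The same loss occurs when you split $(|\vp|^2+|\vt|^2)^{\tilde\delta\alpha}\le C(|\vp|^{2\tilde\delta\alpha}+|\vt|^{2\tilde\delta\alpha})$ and then recombine via $|\up|^{2\tilde\delta\alpha}+|\ut|^{2\tilde\delta\alpha}\le 2|u|^{2\tilde\delta\alpha}$: the factor $2$ alone can push the coefficient above $1$. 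The clean repair is to keep the squared variables together and use Jensen's inequality on the concave, increasing map $w\mapsto(L_1+L_2+w^{\tilde\delta})^\alpha$ applied to $w=|v|^2$: since $\EE_{u,x}|\vp|^2=(1-\rp)|\up|^2+2\theta(x)\rp$ and $\EE_{u,x}|\vt|^2=(1-\rt)^2|\ut|^2+(d-1)\theta(x)\rt(2-\rt)$, one gets $\EE_{u,x}\big[(L_1+L_2+|v|^{2\tilde\delta})^\alpha\big]\le\big(L_1+L_2+(\max((1-\rp),(1-\rt)^2)|u|^2+C\theta(x))^{\tilde\delta}\big)^\alpha$, and subadditivity of $w\mapsto w^{\tilde\delta}$ plus the mean value inequality then give the sharp leading term with an $O(|u|^{2\tilde\delta(\alpha-1)})$ remainder. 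Note that concavity requires $\alpha\tilde\delta\le1$, which holds in the paper's application (there $\tilde\delta=\delta<\delta_k/d\le 1/(2d)$ and $\alpha<d$) but not for every pair of parameters allowed by the statement; for $\alpha\tilde\delta>1$ one must instead expand to first order around the mean. With this adjustment your argument is complete and coincides with the cited proof.
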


\begin{proof}[Proof of Proposition \ref{prop:lyapunov}]

	It is known, see \cite[Equation (25)]{Bernou_2020} and \cite{Esposito_2013} for a detailed derivation, that for all $(x,v) \in G$, $v \cdot \nabla_x \tau(x,v) = -1$. Hence, for all $\alpha \in (1, d)$,
	\begin{align}
		\label{eq:derivative_weight} 
		&v \cdot \nabla_x m_\alpha(x,v) = - \alpha (v \cdot \nabla_x \tau(x,-v)) m_{\alpha - 1} = - \alpha \, m_{\alpha - 1}.
	\end{align}
	In the whole proof, we write Case (1) and (2) when we wish to distinguish between the proof of \eqref{eq:Lyapunov_subgeom} and the one of \eqref{eq:Lyapunov_expo}, respectively.

	\vspace{.3cm}
	
	\textbf{Step 1.} Let $\alpha \in (1,d)$, $f \in L^1_{m_\alpha}(G)$. We differentiate the $m_\alpha$-norm of $f$. First, since $n_x$ is the unit outward normal at $x \in \pO$, for $T > 0$, we apply Green's formula to find
	\begin{align}
		\label{eq:Lyapunov_base_computation}
		\frac{d}{dT} \int_G |S_T f| \, m_\alpha \, \d v \, \d x &\le \int_G |S_T f| \, (v \cdot \nabla_x m_\alpha) \, \d v \, \d x - \int_{\Sigma} (v \cdot n_x) \, m_\alpha \, (\gamma |S_T f|) \, \d v \, \d \zeta(x) \\
		&\quad + \int_G m_\alpha(x,v) \int_{\R^d} k(x,v',v) |S_T f|(x,v') \, \d v' \, \d v \, \d x \nonumber  \\
		&\quad - \int_G m_\alpha(x,v) |S_T f|(x,v) \int_{\R^d} k(x,v,v') \, \d v' \, \d v \, \d x. \nonumber 
	\end{align}
	By Theorem \ref{thm:well_posed_trace},
	\[ |\gamma S_t| f(x,v) = \gamma |S_t f|(x,v), \qquad \hbox{ a.e. in } (\R_+^* \times \Sigma_+) \cup (\R_+^* \times \Sigma_-), \]
	hence, we will not distinguish between both values in what follows.
	
	\medskip 
	
	\textbf{Step 2: First term on the r.h.s. \hspace{-.25cm} of \eqref{eq:Lyapunov_base_computation}.}
	Using \eqref{eq:derivative_weight}, we immediately obtain, for $\alpha \in (1,d)$,
		\begin{align}
			\label{eq:Lyapunov_first_term_1}
			\int_G |S_T f| (v \cdot \nabla_x m_\alpha) \, \d v \, \d x = - \alpha \|S_T f\|_{m_{\alpha-1}}.  
		\end{align}

	\medskip 
	
	\textbf{Step 3: Ante-penultimate term on the r.h.s. \hspace{-.2cm} of \eqref{eq:Lyapunov_base_computation}.}
	 By Tonelli's theorem, for any $\alpha \in (1,d)$
	\begin{align}
		\label{eq:temp_ante_penultimate}
		&\int_G m_\alpha(x,v) \int_{\R^d} k(x,v',v) |S_T f|(x,v') \, \d v' \, \d v \, \d x \nonumber \\
		&= \int_G |S_T f|(x,v) \int_{\R^d} k(x,v,v') m_\alpha(x,v') \, \d v' \, \d v \, \d x. 
			\end{align}
		We claim that 
		\begin{align}
			\label{eq:claim_int_v}
			\sup_{(x,v) \in G} \int_{\R^d} k(x,v,v') m_{\alpha}(x,v') \, \d v' < \infty. 
		\end{align} 
		
	Indeed, we clearly have, for all $(x,v) \in G$,
	\[ m_\alpha(x,v) \le \Big(e^2 + \frac{d(\Omega)}{|v|c_4} + |v|^{2\delta} \Big)^\alpha, \]
	hence, for all $K \gg 1$,
	\begin{align*}
		\int_{\R^d} k(x,v,v') m_{\alpha}(x,v') \, \d v' &\le \int_{\R^d} k(x,v,v') \Big(e^2 + \frac{d(\Omega)}{|v'|c_4} + |v'|^{2\delta} \Big)^\alpha \, \d v' \\
		&\le \int_{\{v' \in \R^d: |v'| \le K\}} k(x,v,v') \Big( e^2+K^{2\delta} + \frac{d(\Omega)}{|v'|c_4} \Big)^\alpha \, \d v' \\
		&\qquad + \int_{\{v' \in \R^d: |v'| \ge K\}} k(x,v,v') \Big( e^2+ \frac{d(\Omega)}{c_4 K} + |v'|^{2\delta} \Big)^\alpha \, \d v'.
	\end{align*}
	Since $\alpha \in (1,d)$, by convexity, $(x+y)^\alpha \le C_\alpha (x^{\alpha} + y^{\alpha})$ for some constant $C_\alpha > 0$ for all $x,y \in \R_+$. Thus,
	\begin{align*}
		\int_{\R^d} k(x,v,v') m_{\alpha}(x,v') \, \d v' &\le C_{K,\alpha,d(\Omega),c_4} \Big( \sigma(x) + \int_{\{|v'| \le K\}} k(x,v,v') \frac{1}{|v'|^\alpha} \Big) \, \d v' \\
		&\qquad \qquad  + \int_{\{|v'| \ge K\}} |v'|^{2 \delta \alpha} k(x,v,v') \, \d v' \Big) \\
		&\le C \big(\sigma_\infty + k_\infty + zM_{\delta_k} \big)
	\end{align*}
	where we use Hypothesis \ref{hypo:k}, the fact that $2 \alpha \delta \le 2 \delta_k$ by choice of $\delta$, and that, by a change to hyperspherical coordinates,
	\begin{align*}
		\int_{\{v \in \R^d, |v| \le K\}} \frac{1}{|v|^\alpha} \, \d v = C_d \int_0^K \frac{1}{r^{\alpha + 1 - d}} \d r \le C_{d,\alpha,K}
	\end{align*}
	since $\alpha + 1 - d < 1$ by assumption on $\alpha$. This concludes the proof of \eqref{eq:claim_int_v}. Injecting this into \eqref{eq:temp_ante_penultimate}, we find, for some constant $C$ independent of $T$ and $f$, using the $L^1$ contraction
	\begin{align}
		\label{eq:Lyapunov_antepenultimate}
	\int_G m_\alpha(x,v) \int_{\R^d} k(x,v',v) |S_T f|(x,v') \, \d v' \, \d v \, \d x \le C \|S_T f\|_{L^1} \le C \|f\|_{L^1}.
	\end{align} 

\medskip 

\textbf{Step 4: Control of the last term on the r.h.s. of \eqref{eq:Lyapunov_base_computation}.}
\begin{enumerate}
	\item[Case (1):] Since $k$ is non-negative, for any $\alpha \in (1,d)$, one easily obtains
	\begin{align}
		\label{eq:Lyapunov_last_term_1}
		- \int_G m_\alpha(x,v) |S_T f|(x,v) \int_{\R^d} k(x,v,v') \, \d v' \, \d v \, \d x \le 0. 
	\end{align}
\item[Case (2):] We use Hypothesis \ref{hypo:sigma_below}. We simply obtain
\begin{align}
	\label{eq:Lyapunov_last_term_2}
	- \int_G \sigma(x) m_\alpha(x,v) |S_T f|(x,v) \, \d v \, \d x \le - \sigma_0 \|S_T f\|_{m_\alpha}. 
\end{align}
\end{enumerate} 

\medskip 

\textbf{Step 5: Control of the boundary term in \eqref{eq:Lyapunov_base_computation}.}
Let 
\[B := -\int_\Sigma (v \cdot n_x) m_\alpha(x,v) \big(\gamma |S_T f|\big)\, \d v \, \d \zeta(x). \]
We show that, in case \CLBC, for some $\Lambda > 0$,
\begin{align}
	\label{eq:Lyapunov_boundary_CL}
	B \le C_{\Lambda} \int_{\{(x,v) \in \Sigma_+, |v| \le \Lambda \}} \gamma_+ |S_T f|(x,v) \, |\vp| \, \d v \, \d \zeta(x),
\end{align}
while in the case \MBC,
\begin{align}
	\label{eq:Lyapunov_boundary_Maxwell}
B \le C \int_{\Sigma_+} \gamma_+ |S_T f|(x,v) \, |\vp| \, \d v \, \d \zeta(x).
\end{align}
This step is divided into two further substeps, the first one treating \eqref{eq:Lyapunov_boundary_CL}, the second one focusing on \eqref{eq:Lyapunov_boundary_Maxwell}.

\medskip 

\textbf{Step 5.1: case \CLBC.} 
By definition of $B$,
\begin{align*}
	B &= - \int_{\Sigma_+} \gamma_+ |S_T f| \, |\vp| \, m_\alpha(x,v) \, \d v \, \d \zeta(x) + \int_{\Sigma_-} \gamma_- |S_T f| \, |\vp| \, m_\alpha(x,v) \, \d v \, \d \zeta(x)  \\
	&=: -B_1 + B_2,
\end{align*}
the last equality standing for a definition of $B_1$ and $B_2$. 
Using the boundary condition and Tonelli's theorem, it is straightforward to see that
\begin{align*}
	B_2 = \int_{\Sigma_+} \gamma_+ |S_T f|(x,u) \, |\up| \Big( \int_{\Sigma_-^x} m_\alpha(x,v) \, |\vp| \, R(u \to v; x) \, \d v \Big) \d u \, \d \zeta(x).
\end{align*}
Set, for all $x \in \pO$, $u \in \Sigma_+^x$, 
\[ P_{u,x} := \int_{\Sigma_-^x} m_\alpha(x,v) \, |\vp| \, R(u \to v; x) \, \d v. \]
We will split the integral in $P_{u,x}$ between an integral on $\{v \in \Sigma_-^x, |v| \le 1\}$ and one on the set $\{v \in \Sigma_-^x, |v| \ge 1\}$. We start with the treatment of the former. 
Note first that, for all $v \in \Sigma_-^x$, $\up \cdot \vp \le 0$ so that, using the definition of $I_0$ \eqref{eq:defI0}, 
\[ I_0 \Big( \frac{(1-\rp)^{\frac12} \up \cdot \vp}{\theta(x) \rp} \Big) \le \exp \Big(-\frac{2(1-\rp)^{\frac12} \up  \cdot \vp}{2\theta(x) \rp} \Big), \]
hence, using $\theta(x) \ge \theta_0$ for some $\theta_0 > 0$ for all $x \in \pO$ (by positivity and continuity assumptions)
\begin{align*}
	R(u \to v;x) &= \frac{\exp \Big(-\frac{|\vt - (1-\rt) \ut|^2}{2 \theta(x) \rt (2-\rt)} \Big)}{(2 \pi \theta(x) \rt (2 - \rt))^{\frac{d-1}{2}}}  \frac{\exp \Big(-\frac{|\vp|^2}{2 \theta(x) \rp} -\frac{(1-\rp)|\up|^2}{2 \theta(x) \rp} \Big)}{\theta(x) \rp} I_0 \Big( \frac{(1 - \rp)^{\frac12} \up \cdot \vp}{ \theta(x) \rp} \Big) \\
	&\le \frac{1}{(2 \pi \theta(x) \rt (2 - \rt))^{\frac{d-1}{2}}} \frac{\exp \Big(-\frac{|\vp + (1-\rp)^{\frac12} \up|^2}{2 \theta(x) \rp} -\frac{|\vt - (1-\rt) \ut|^2}{2 \theta(x) \rt (2-\rt)} \Big)}{\theta(x) \rp} \\ 
	&\le \frac{1}{\theta_0 \rp (2 \pi \theta_0 \rt (2-\rt))^{\frac{d-1}2}} \le C,
\end{align*}
where we used the upper bound $1$ for both exponentials. We clearly have, for all $(x,v) \in \bar G$,
\[ m_\alpha(x,v) \le \Big(e^2 + \frac{d(\Omega)}{c_4|v|} + |v|^{2\delta} \Big)^\alpha \]
by definition of $m_\alpha$, and using that $|\vp| \le |v|$, we get
\begin{align*}
	\int_{\{v \in \Sigma_-^x, |v| \le 1\}} \hspace{-.8cm} m_\alpha(x,v) \, |\vp| \, R(u \to v; x) \, \d v &\le \int_{\{v \in \Sigma_-^x, |v| \le 1\}} \Big( e^2 + 1 + \frac{d(\Omega)}{|v|c_4} \Big)^{\alpha} \, |\vp| \, R(u \to v; x) \, \d v \\
	&\le C \int_{\{v \in \Sigma_-^x, |v| \le 1\}} \Big( e^2 + 1 + \frac{d(\Omega)}{c_4 |v|} \Big)^{\alpha} \, |v| \, \d v \\
	&\le C_{\alpha}
\end{align*}
for some constant $C_{\alpha} > 0$ independent of $u$ and $x$. We used that $\alpha < d$ to obtain the existence of such finite $C_{\alpha}$ (as can be checked by using an hyperspherical change of variable, see Step 3). On the other hand,
\begin{align*}
	\int_{\{v \in \Sigma_-^x, |v| \ge 1\}} &m_\alpha(x,v) \, |\vp| \, R(u \to v; x) \, \d v \\
	&\le \int_{\{v \in \Sigma_-^x, |v| \ge 1\}} \Big(e^2 + \frac{d(\Omega)}{c_4} + |v|^{2\delta}\Big)^{\alpha} \, |\vp| \, R(u \to v; x) \, \d v \\
	&\le \int_{\Sigma_-^x} \Big(e^2 + \frac{d(\Omega)}{c_4} + |v|^{2\delta}\Big)^{\alpha} \, |\vp| \, R(u \to v; x) \, \d v. \end{align*}
Overall, we proved that 
\begin{align}
	\label{eq:JU}
	P_{u,x} \le C_{\alpha} + \int_{\Sigma_-^x} \Big(e^2 + \frac{d(\Omega)}{c_4} + |v|^{2\delta}\Big)^{\alpha} \, |\vp| \, R(u \to v; x) \, \d v.
\end{align}
On the other hand, for all $(x,u) \in \Sigma_+$, $\int_{\Sigma_-^x} |\vp| \, R(u\to v; x) \, \d v = 1$ by \eqref{eq:normalization_basic} and \eqref{eq:normalization_2}. Since we also have $\tau(x,-v) \le d(\Omega)/|v|$ and $c_4 < 1$, $\frac{d(\Omega)}{c_4 |v|} - \tau(x,-v) \ge 0$, so that 
\[ m_\alpha(x,u) \ge (e^2 + |u|^{2\delta})^{\alpha},\]
we get
\begin{align}
	\label{eq:B1}
	- B_1 \le - \int_{\Sigma^+} |\up| \, \gamma_+  |S_T f|(x,u)\,  (e^2+|u|^{2\delta})^{\alpha} \int_{\Sigma_-^x} |\vp| \, R(u \to v; x) \, \d v  \, \d u \, \d \zeta(x). 
\end{align}
Gathering \eqref{eq:JU}, \eqref{eq:B1} and the definition of $B$, we find
\begin{align*}
	B &\le \int_{\Sigma_+} |\up| \, \gamma_+ |S_T f|(x,u) \, \\
	&\quad \times \Big\{ C_{\alpha} + \int_{\Sigma_-^x} \Big[\Big(e^2 + \frac{d(\Omega)}{c_4} + |v|^{2\delta}\Big)^{\alpha} - (e^2 + |u|^{2\delta})^{\alpha} \Big] \, |\vp| \, R(u \to v; x) \, \d v \Big\} \, \d u \, \d \zeta(x) \\
	&\le \int_{\Sigma_+} |\up| \, \gamma_+ |S_T f|(x,u) \, \big( C_{\alpha} + I_{u,x,e^2, \frac{d(\Omega)}{c_4}} \big)\, \d u \, \d \zeta(x),
\end{align*}
where $I_{u,x,e^2, d(\Omega)/c_4}$ is defined as in Lemma \ref{lemma:Iux} with $\tilde \delta = \delta$, $\alpha$ as before and $L_1 = e^2$, $L_2 =  \frac{d(\Omega)}{c_4}$. Splitting $\Sigma_+^x$ as
\[ \Sigma_+^x = \Big\{u \in \Sigma_+^x: |u| < \Lambda\Big\} \cup \Big\{u \in \Sigma_+^x: |u| \ge \Lambda\Big\} \]
with $\Lambda > 0$ given by Lemma \ref{lemma:Iux} applied with $P = C_{\alpha}$, we find that 
\begin{align}
	\label{eq:claim_large_lambda}
	 \int_{\{(x,u) \in \Sigma_+, |u| \ge \Lambda\}} |\up| \, |\gamma_+ S_T f|(x,u) \, \big( C_{\alpha} + I_{u,x,e^2, \frac{d(\Omega)}{c_4}} \big) \d u \, \d \zeta(x) \le 0, 
\end{align}
leading to 
\begin{align}
	\label{eq:tmp_B}
	B &\le \int_{\pO} \int_{\{u \in \Sigma_+^x, |u| \le \Lambda\}} |\up| \, |\gamma_+ S_T f|(x,u) \Big(C_{\alpha} + I_{u,x,e^2, \frac{d(\Omega)}{c_4}} \Big) \d u \, \d \zeta(x)  \nonumber  \\
	&\le \int_{\pO} \int_{\{u \in \Sigma_+^x, |u| \le \Lambda\}} |\up| \, |\gamma_+ S_T f|(x,u) \nonumber \\
	&\qquad \qquad \times \Big( C_{\alpha} + \int_{\Sigma_-^x} \Big(e^2 + \frac{d(\Omega)}{c_4} + |v|^{2\delta}\Big)^{\alpha} \, |\vp| \, R(u\to v;x) \, \d v \Big) \d u \, \d \zeta(x).  
\end{align}
We claim that
\begin{align}
	\label{eq:claim_low_Lambda}
	 \sup_{x \in \partial \Omega, u \in \Sigma_+^x, |u| \le \Lambda} \int_{\Sigma_-^x}\Big(e^2 + \frac{d(\Omega)}{c_4} + |v|^{2\delta}\Big)^{\alpha} \, |\vp| \, R(u \to v; x) \, \d v \in (0,\infty). 
	 \end{align}
	 
Note that the proof of \eqref{eq:Lyapunov_boundary_CL} directly follows from this claim and \eqref{eq:claim_large_lambda}.  It thus only remains to prove \eqref{eq:claim_low_Lambda}. 
Let $x \in \partial \Omega$, $u \in \Sigma_+^x$ with $|u| \le \Lambda$. For simplicity we will rely on the probabilistic tools introduced in \cite[Section 2.2]{Bernou_2021}. We may write the integral inside the supremum as 
\[ \EE \Big[ \Big(e^2 + \frac{d(\Omega)}{c_4} + \big(|X|^2 + |Y|^2\big)^{\delta} \Big)^{\alpha}\Big], \]
for $Y \sim \mathrm{Ri}((1-\rp)^{\frac12}|\up|, \theta(x) \rp)$ a Rice distribution (see \cite[Definition 12]{Bernou_2021}) of parameters $(1-\rp)^{\frac12}|\up|$ and $\theta(x) \rp$,  and $X \sim \mathcal{N}((1-\rt) \ut, \theta(x) \rt (2 - \rt) I_{d-1})$ a Gaussian random variable, where $I_k$ denotes the identity matrix of size $k \times k$. Using \cite[Proposition 13]{Bernou_2021} (taken from \cite{Kobayashi_2009}), we have, for any $\vartheta \in [0,2\pi)$, 
\[ Y \overset{\mathcal{L}}{=} \sqrt{Y_1^2 + Y_2^2} \quad \text{  with } \]
\[ Y_1 \sim \mathcal{N}\Big((1-\rp)^{\frac12} |\up| \cos(\vartheta), \theta(x) \rp\Big), \qquad Y_2 \sim \mathcal{N}\Big((1-\rp)^{\frac12} |\up| \sin(\vartheta), \theta(x) \rp\Big) \] 
two random variables independent of everything else. The integral thus rewrites 
\begin{align*}
	\mathbb{E} \Big[ \Big(e^2 + \frac{d(\Omega)}{c_4} + \big( |X|^2 + |Y_1|^2 + |Y_2|^2 \big)^\delta \Big)^\alpha \Big],
\end{align*} and is finite for any $(x,u)$ in $\Sigma_+$ with $|u| \le \Lambda$ by standard property of the moments of Gaussian random variables. 

Since $x \mapsto n_x$ and $x \mapsto \theta(x)$ are continuous, for $(x_n, u_n) \to (x,u)$ in $\{(x,u) \in \Sigma_+, |u| \le \Lambda\}$, $v \in \R^d$, 
\begin{align*}
	\lim \limits_{n \to \infty} \mathbf{1}_{\{v \cdot n_{x_n} < 0\}} |v \cdot n_{x_n}| R(u_n \to v; x_n) = \mathbf{1}_{\{v \cdot n_{x} < 0\}} |v \cdot n_{x}| R(u \to v; x)
\end{align*}
almost everywhere, hence 
\begin{align*}
	(x,u) \mapsto \int_{\Sigma_-^x}\Big(e^2 + \frac{d(\Omega)}{c_4} + |v|^{2\delta}\Big)^{\alpha} \, |\vp| \, R(u \to v; x) \, \d v
\end{align*}
is continuous by dominated convergence theorem. 
Since $\{(x,u) \in \Sigma_+, |u| \le \Lambda\}$ is compact, the proof of \eqref{eq:claim_low_Lambda} is complete.

\medskip 

\textbf{Step 5.2: case \MBC.} We prove here \eqref{eq:Lyapunov_boundary_Maxwell}. Using the boundary condition, we have 
\begin{align*}
	B &= \int_{\Sigma_-} |\vp| m_\alpha(x,v) \beta(x) M(x,v) \Big( \int_{\Sigma_+^x}  |v' \cdot n_x| |S_T f|(x,v') \, \d v' \Big) \, \d v \, \d \zeta(x) \\
	&\qquad + \int_{\Sigma_-} |\vp| (1-\beta(x)) m_\alpha(x,v) |S_T f|(x,\eta_x(v)) \, \d v\, \d \zeta(x) \\
	&\qquad  - \int_{\Sigma_+} |\vp| m_\alpha(x,v) |S_T f| \, \d v \, \d \zeta(x) \\
	&\le \int_{\Sigma_-} |\vp| m_\alpha(x,v) \beta(x) M(x,v) \Big( \int_{\Sigma_+^x}  |v' \cdot n_x| |S_T f|(x,v') \, \d v' \Big) \, \d v \, \d \zeta(x) \\
	&\qquad + \int_{\Sigma_+} |\vp| |S_T f|(x,v) \Big( (1-\beta_0) m_\alpha(x,\eta_x(v)) - m_\alpha(x,v) \Big) \, \d v\, \d \zeta(x),
\end{align*}
where we used the change of variable $v \to \eta_x(v)$ with Jacobian $1$ and that  $|\vp| = |\eta_x(v) \cdot n_x|$ to get the inequality. 
Let us focus on the last term on the right-hand side of this inequality. For all $(x,v) \in \Sigma_+$, since $c_4 \in (0,1)$, $(1-c_4)^4 = (1-\beta_0)$, $\alpha < d < 4$, $\tau(x,-\eta_x(v)) = 0$ and $|\eta_x(v)|=|v|$,
\begin{align*}
	(1-\beta_0)m_\alpha(x,\eta_x(v)) &= (1-c_4)^4 \Big( e^2 + \frac{d(\Omega)}{|v|c_4} + |v|^{2\delta} \Big)^\alpha\\
	& \le \Big( (1-c_4) \big(  e^2 + \frac{d(\Omega)}{|v|c_4} + |v|^{2\delta} \big) \Big)^\alpha \\
	&\le \Big( e^2 + \frac{d(\Omega)}{c_4|v|} - \frac{d(\Omega)}{|v|} + |v|^{2\delta} \Big)^\alpha\\
	&\le \Big(e^2 + \frac{d(\Omega)}{c_4 |v|} - \tau(x,-v) + |v|^{2\delta} \Big)^\alpha = m_\alpha(x,v), 
\end{align*}
where we used $\tau(x,-v) \le d(\Omega)/|v|$ by definition of $d(\Omega)$. We get, as a first conclusion,
\begin{align*}
	B &\le \int_{\Sigma_-} |\vp| m_\alpha(x,v) \beta(x) M(x,v) \Big( \int_{\Sigma_+^x}  |v' \cdot n_x| |S_T f|(x,v') \, \d v' \Big) \, \d v \, \d \zeta(x) \\
	&\le \int_{\Sigma_+} |\vp| |S_T f|(x,v) \Big( \int_{\Sigma_-^x} |\up| m_\alpha(x,u) M(x,u) \, \d u \Big) \, \d v \, \d \zeta(x).
\end{align*}
By \eqref{eq:property_M}, $\sup_{(x,v) \in \Sigma_-} M(x,v) \le \chi_M < \infty$. Combining this with the moment condition \eqref{eq:moment_M} -- noticing $1 + 2 \delta \alpha \le 1+\varepsilon$ by choice of $\delta, \alpha$ -- one can split the integral over $v\in\R^d$ exactly as in the proof of \eqref{eq:claim_int_v}. This yields
\begin{align*}
	\sup_{x \in \partial \Omega} \int_{\Sigma_-^x} |v' \cdot n_x| M(x,v') m_\alpha(x,v') \, \d v' \le C, 
\end{align*}
which concludes the proof of \eqref{eq:Lyapunov_boundary_Maxwell}. 

\medskip 

\textbf{Step 6: conclusion under Hypotheses \ref{hypo:boundary} and \ref{hypo:k}.} Using \eqref{eq:Lyapunov_first_term_1}, \eqref{eq:Lyapunov_antepenultimate}, \eqref{eq:Lyapunov_last_term_1} and Step 5 inside \eqref{eq:Lyapunov_base_computation}, we obtain, for all $\alpha \in (1,d)$, for $\Lambda > 0$ given by Step 5, for some constant $C > 0$ allowed to depend on $\alpha, d, \Lambda$, 
\begin{align*}
	\frac{\d}{\d T} \|S_T f\|_{m_\alpha} \le - \alpha \|S_T f\|_{m_{\alpha-1}} + C \|f\|_{L^1} + C B_T,
\end{align*}
where, 
\begin{align*}
	B_T= \left\{ \begin{array}{lll} 
	&\int_{\Sigma_+} |\vp| \gamma_+ |S_T f|(x,v) \, \d v \, \d \zeta(x) \qquad &\hbox{ case \MBC}, \\
	&\int_{\{(x,v) \in \Sigma_+, |v| \le \Lambda\}} \gamma_+ |S_T f|(x,v) |\vp| \, \d v \, \d \zeta(x) \qquad &\hbox{ case \CLBC. }
	\end{array}\right.
	\end{align*}
Integrating this inequality on $[0,T]$, we find
\begin{align*}
	\|S_T f\|_{m_\alpha} + \alpha \int_0^T \|S_s f\|_{m_{\alpha-1}} \, \d s \le \|f\|_{m_{\alpha}} + CT \|f\|_{L^1} + C \int_0^T B_s \, \d s.
\end{align*}
The conclusion follows by noticing that Lemma \ref{lemma:control_flux} implies, in both cases \MBC \space and \CLBC
\begin{align*}
	\int_0^T B_s \, \d s \le C(1+T)\|f\|_{L^1}. 
\end{align*}

\medskip 

\textbf{Step 7: conclusion under Hypotheses \ref{hypo:boundary}-\ref{hypo:sigma_below}.} 
 Using \eqref{eq:Lyapunov_first_term_1}, \eqref{eq:Lyapunov_antepenultimate}, \eqref{eq:Lyapunov_last_term_2} and Step 5 inside \eqref{eq:Lyapunov_base_computation}, throwing away the negative term from \eqref{eq:Lyapunov_first_term_1}, for all $\alpha \in (1,d)$, for $\Lambda > 0$ given by Step 5, for some constant $C > 0$ allowed to depend on $\alpha, d, \Lambda$, 
\begin{align*}
	\frac{\d}{\d T} \|S_T f\|_{m_\alpha} \le - \sigma_0 \|S_T f\|_{m_{\alpha}} + C \|f\|_{L^1} + C B_T,
\end{align*}
where again,
\begin{align*}
	B_T= \left\{ \begin{array}{lll} 
		&\int_{\Sigma_+} |\vp| \gamma_+ |S_T f|(x,v) \, \d v \, \d \zeta(x) \qquad &\hbox{ case \MBC}, \\
		&\int_{\{(x,v) \in \Sigma_+, |v| \le \Lambda\}} \gamma_+ |S_T f|(x,v) |\vp| \, \d v \, \d \zeta(x) \qquad &\hbox{ case \CLBC}.
	\end{array}\right.
\end{align*}
Integrating this inequality on $[0,T]$, we find
\begin{align*}
	\|S_T f\|_{m_\alpha} + \sigma_0 \int_0^T \|S_s f\|_{m_{\alpha}} \, \d s \le \|f\|_{m_{\alpha}} + CT \|f\|_{L^1} + C \int_0^T B_s \, \d s.
\end{align*}
The conclusion follows again by noticing that Lemma \ref{lemma:control_flux} implies
\begin{align*}
	\int_0^T B_s \, \d s \le C(1+T)\|f\|_{L^1}.  \qquad \qquad \qedhere
\end{align*}
\end{proof} 

\section{Proof of Theorems \ref{thm:main_1} and \ref{thm:main_2}}

\label{sec:Doeblin}

In this section, we first prove a Doeblin-Harris condition, that, along with the Lyapunov conditions obtained in Section \ref{sec:Lyapunov}, provide the proof of Theorem \ref{thm:main_1}. Theorem \ref{thm:main_2} then follows by a usual Cauchy sequence argument, and by applying Theorem \ref{thm:main_1}. Recall the definitions of $\tau$ and $q$ from \eqref{eq:def_tau} and \eqref{eq:def_q} respectively. 

\subsection{Doeblin-Harris condition}
\label{subsec:Doeblin}

We start with the proof of the Doeblin-Harris condition. 
The key point is that, since $\sigma \in L^\infty(\Omega)$, one can find a natural lower bound of the dynamics by the free-transport ones. For this, we establish first a Duhamel formula.  

\begin{lemma}
	For all $f \in L^1(G)$, for all $(t,x,v) \in \R_+ \times \bar G$, the following formula holds:
	\begin{align}
		\label{eq:Duhamel}
		S_t f(x,v) &= \mathbf{1}_{\{\tau(x,-v) \le t\}} e^{-\int_{t - \tau(x,-v)}^{t} \sigma(x-(t-s)v) \, \d s} \Big(S_{t - \tau(x,-v)} f\Big) \big(q(x,-v), v\big) \\
		&\quad + \mathbf{1}_{\{t < \tau(x,-v)\}} e^{-\int_0^t \sigma(x-(t-s)v) \, \d s } f(x-tv, v) \nonumber \\
		&\quad + \int_{\max(0, t - \tau(x,-v))}^t e^{-\int_s^{t} \sigma(x-(t-u)v) \, \d u} \nonumber \\
		&\qquad \times \int_{\R^d} \Big[ k\big(x-(t-s)v, v',v\big) \, S_s f\big(x-(t-s)v, v'\big) \Big] \d v' \, \d s. \nonumber 
	\end{align}
	
	As a consequence, for all $f \in L^1(G)$ with $f \ge 0$, for all $(x,v) \in \bar G$,  
	\begin{align}
		\label{eq:upper_bound_Duhamel}
		S_t f(x,v) \ge \mathbf{1}_{\{\tau(x,-v) \le t\}} e^{-\int_0^{\tau(x,-v)} \sigma(x-sv) \, \d s} S_{t - \tau(x,-v)} f \big(q(x,-v), v\big).
	\end{align}
\end{lemma}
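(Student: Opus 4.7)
The plan is to use the method of characteristics on the equation after isolating the loss term. Rewrite \eqref{eq:main_pb} in the form
\[ \partial_t u + v \cdot \nabla_x u + \sigma(x) u = \CC_+ u, \]
where $\CC_+ u(x,v) = \int_{\R^d} k(x,v',v) u(x,v') \, \d v'$ is viewed as a source term. Fix $(t,x,v) \in \R_+ \times \bar G$ and consider the backward characteristic $s \mapsto x - (t-s)v$ on $s \in [0,t]$. Along it, the function $\phi(s) := S_s f(x - (t-s)v, v)$ formally satisfies the ODE
\[ \phi'(s) = - \sigma(x - (t-s)v) \phi(s) + \CC_+ S_s f(x-(t-s)v, v), \]
and the integrating factor $e^{\int_s^t \sigma(x-(t-u)v) \, \d u}$ turns the left-hand side into an exact derivative that can be integrated from an appropriate starting time $s_0$ up to $t$.

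The choice of $s_0$ depends on whether the backward characteristic exits $\Omega$ before time $0$. If $t < \tau(x,-v)$, the characteristic stays in $\Omega$ on $[0,t]$, so we take $s_0 = 0$ with initial value $\phi(0) = f(x-tv,v)$, producing the second term of \eqref{eq:Duhamel}. If $\tau(x,-v) \le t$, the characteristic first enters $\Omega$ at time $s_0 = t - \tau(x,-v)$ at the boundary point $q(x,-v)$, with velocity $v$ pointing inward, so $(q(x,-v), v) \in \Sigma_-$. We then integrate from $s_0$ to $t$, taking as initial value the trace $\gamma_- S_{s_0} f(q(x,-v), v)$, which produces the first term. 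In both cases the Duhamel source integral runs from $s_0 = \max(0, t - \tau(x,-v))$ to $t$, giving the last term.

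The main technical obstacle is rigour: $S_t f$ is only an $L^\infty_t L^1_{x,v}$ solution, so the ODE computation along characteristics must be justified. The cleanest way is to verify directly that the right-hand side of \eqref{eq:Duhamel} defines a mild solution of \eqref{eq:main_pb} with the correct initial condition and boundary trace, and to invoke the uniqueness statement of Theorem \ref{thm:well_posed_trace}; alternatively, one regularizes (mollification in $v$ and truncation in $t$, away from the boundary) and passes to the limit using Green's formula and the renormalization identity $\gamma_t \beta(f) = \beta(\gamma_t f)$ together with the identity $v \cdot \nabla_x \tau(x,v) = -1$ on $G$, which controls how the $\tau$-dependent splitting interacts with the characteristic flow.

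Finally, inequality \eqref{eq:upper_bound_Duhamel} is immediate from \eqref{eq:Duhamel}: when $f \ge 0$, by the positivity of the semigroup (Theorem \ref{thm:well_posed_trace}, point \textit{iv.}) and the non-negativity of $k$, the second and third terms of \eqref{eq:Duhamel} are both non-negative, so the first term alone is a lower bound. It only remains to rewrite $\int_{t-\tau(x,-v)}^t \sigma(x-(t-s)v) \, \d s = \int_0^{\tau(x,-v)} \sigma(x-uv) \, \d u$ via the change of variable $u = t-s$, which matches the exponent in the stated inequality.
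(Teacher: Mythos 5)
Your proposal is correct and follows essentially the same route as the paper: the paper likewise isolates the loss term into a transport--absorption--boundary semigroup $(e^{t\mathcal{T}})_{t\ge 0}$, treats $\mathcal{C}_+$ as a Duhamel source integrated from $s_0=\max(0,t-\tau(x,-v))$, represents $e^{t\mathcal{T}}$ explicitly along backward characteristics with the integrating factor $e^{\int_s^t\sigma(x-(t-u)v)\,\d u}$ (justified via a test-function/distributional computation, close in spirit to your regularization alternative), and obtains \eqref{eq:upper_bound_Duhamel} by dropping the non-negative terms and changing variables in the exponent. No gaps worth flagging.
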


\begin{proof}
	For $(x,v) \in \Sigma_- \cup \Sigma_0$, we have $\tau(x,-v) = 0$ and $q(x,-v) = x$ so that the formula is obviously true. 
	
	\medskip 
	
	\textbf{Step 1.} Consider the problem 
	\begin{align}
		\label{eq:pb_Duhamel}
		\left\{ \begin{array}{lll} 
		&\partial_t g + v \cdot \nabla_x g + \sigma(x) g = 0, \qquad &\hbox{ in } \R_+ \times G, \\
		&\gamma_- g = K \gamma_+ g, \qquad &\hbox{ on } \R_+ \times \Sigma_-, \\
		&g(0,x,v) = g_0(x,v), \qquad &\hbox{ in } G, 
		\end{array} \right.
	\end{align}
	with $K$ given by \eqref{eq:def_K}. Problem \eqref{eq:pb_Duhamel} is a bounded perturbation of the corresponding free-transport problem, and therefore (see the proof of Theorem \ref{thm:well_posed_trace}, Step 1) admits a unique solution $g$ such that for all $g_0 \in L^1(G)$, $t \ge 0$, $g(t,\cdot)$ is the unique solution in $L^{\infty}([0,\infty); L^1(G))$ to the equation taken at time $t$. We write $(e^{t \mathcal{T}})_{t \ge 0}$ for the associated $C_0$-stochastic semigroup. 
	Assume first that $(e^{t\mathcal{T}})_{t \ge 0}$ satisfies, for $(t,x,v) \in \R_+ \times \bar G$,
	\begin{align}
		\label{eq:claim_Duhamel}
		\big(e^{t\mathcal{T}} &g\big)(x,v) = e^{-\int_0^t \sigma(x-(t-s)v) \, \d s} g(x-tv,v) \mathbf{1}_{\{t < \tau(x,-v)\}} \\
		&\quad + e^{-\int^t_{t-\tau(x,-v)} \sigma(x - (t-s)v) \, \d s} \Big(e^{(t-\tau(x,-v)) \mathcal{T}} g \Big) \big(x - \tau(x,-v)v, v\big) \mathbf{1}_{\{t \ge \tau(x,-v)\}}. \nonumber 
	\end{align}
	Adding the source operator $\mathcal{C}_+$ and setting $s = \max(0,t- \tau(x,-v))$, we obtain a solution of the form
	\begin{align*}
		f(t,x,v) = \Big[e^{(t-s) \mathcal{T}} f(s,\cdot, \cdot) \Big] (x,v) + \int_s^t \Big[e^{(t-u) \mathcal{T}} \mathcal{C}_+ f(u,\cdot,\cdot) \Big] (x,v) \, \d u
	\end{align*}
	which rewrites as 
	\begin{align*}
		f(t,x,v) &= e^{-\int_s^t \sigma(x-(t-u)v) \, \d u} f(s,x-(t-s)v, v) \\
		&\qquad + \int_s^t e^{-\int_u^t \sigma(x-(t-r)v) \, \d r} \, \int_{\R^d} k\big(x-(t-u)v,v',v\big) f(u, x - (t-u)v, v') \, \d v' \, \d u. 
	\end{align*}
	Expanding on the two possible values of $\max(0,t-\tau(x,-v))$ and recalling that $q(x,-v) = x - \tau(x,-v)v$ concludes the proof. Moreover, \eqref{eq:upper_bound_Duhamel} follows by using that $(S_t)_{t \ge 0}$ is a non-negative semigroup and a change of variable in the integral inside the exponential.  
	
	\medskip 
	
	\textbf{Step 2.} We prove \eqref{eq:claim_Duhamel}. We keep, for all $t \ge 0$, the notation $g(t,\cdot,\cdot)$ for the unique solution at time $t$ of \eqref{eq:pb_Duhamel} in the remaining part of the proof. Note that, as a solution in $L^1(G)$, $g$ solves \eqref{eq:pb_Duhamel} in the sense of distributions.
	To prove \eqref{eq:claim_Duhamel}, we consider a test function $\phi \in C^{\infty}_c([0,\infty) \times \bar G)$. Then
	\begin{align*}
		&\int_0^\infty \int_G \phi(t,x,v) g(t,x,v) \, \d v \, \d x \, \d t  \\
		& \quad = \int_0^\infty \int_G \phi(t,x,v) \int_{\max(0, t-\tau(x,-v))}^t \frac{d}{ds} \Big[g\big(s, x-(t-s)v, v \big) e^{-\int_s^t \sigma(x-(t-u)v) \d u} \Big] \d s \, \d v \, \d x \, \d t \\
		&\qquad +  \int_0^\infty \int_G \phi(t,x,v) \, g\Big(\max\big(0,t-\tau(x,-v)\big), x - \big(t-\max(0, t-\tau(x,-v)\big))v, v \Big) \\
		&\qquad \hspace{7cm} \times e^{-\int_{\max(0,t-\tau(x,-v))}^t \sigma(x-(t-u)v) \, \d u} \, \d v \, \d x \, \d t.
		\end{align*}	
	Expanding the bracket in the first term on the right-hand side gives
	\begin{align*}
	&\frac{d}{ds} \Big[g\big(s, x-(t-s)v, v \big) e^{-\int_s^t \sigma(x-(t-u)v) \d u} \Big] \\
	&\quad = \Big( \big(\partial_s + v \cdot \nabla_x + \sigma \big) g \Big) (s, x - (t-s)v, v) e^{-\int_s^t \sigma(x-(t-u)v) \d u} = 0,
	\end{align*}
	since $g$ is a solution in the sense of distributions of \eqref{eq:pb_Duhamel}. This concludes the proof of \eqref{eq:claim_Duhamel} in the sense of distributions and the conclusion in $L^1$ follows by density.
	\end{proof}

Recall that $\delta \in (0, \frac{\delta_k}{d} \wedge \frac{\varepsilon}{2d})$ is fixed and set, for all $(x,v) \in \bar G$, $\langle x, v \rangle := (1 + \tau(x,v) + |v|^{2\delta})$. Our first Doeblin-Harris condition is the following.

\begin{thm}
	\label{thm:Doeblin-Harris}
	Under Hypotheses \ref{hypo:boundary} and \ref{hypo:k}, for any $\Lambda \ge 2$, there exist $T(\Lambda) > 0$ and a non-negative measure $\nu$ on $G$, depending on $\Lambda$, with $\nu \not \equiv 0$, such that for all $(x,v) \in G$, for all $f_0 \in L^1(G)$, $f_0 \ge 0$, 
	\begin{align}
		\label{eq:Doeblin-Harris}
		S_{T(\Lambda)} f_0(x,v) \ge \nu(x,v) \int_{\{(y,w) \in G, \langle y, w \rangle \le \Lambda\}} f_0(y,w) \, \d y \, \d w.
	\end{align}
	Moreover, $\nu$ satisfies $\langle \nu \rangle \le 1$ and there exists $\xi > 0$ such that for all $\Lambda \ge 2$, $T(\Lambda) = \xi \Lambda$.  
\end{thm}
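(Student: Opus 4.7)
The plan is to reduce the statement to the corresponding Doeblin-Harris condition for the free-transport semigroup $(T_t)_{t \ge 0}$ with the same boundary operator $K$, established in \cite{Bernou_2020} for case \MBC\ and \cite{Bernou_2021} for case \CLBC. The key observation is that, since $\sigma \le \sigma_\infty$ on $\Omega$, the pointwise Duhamel lower bound \eqref{eq:upper_bound_Duhamel} and the non-negativity of the gain term $\CC_+$ allow us to control the collisional dynamics from below by the purely transport ones, multiplied by the uniform factor $e^{-\sigma_\infty t}$.

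Concretely, I would iterate \eqref{eq:upper_bound_Duhamel} along backward characteristics. Starting from $(x,v) \in G$, the backward trajectory reaches $\partial \Omega$ at $X_1 := q(x,-v)$ at time $t_1 := t - \tau(x,-v)$, and \eqref{eq:upper_bound_Duhamel} combined with $\sigma \le \sigma_\infty$ yields
\begin{align*}
S_t f_0(x,v) \ge e^{-\sigma_\infty t}\, \mathbf{1}_{\{\tau(x,-v) \le t\}}\, S_{t_1} f_0(X_1,v).
\end{align*}
Since $(X_1,v) \in \Sigma_-$, the boundary condition provided by Theorem \ref{thm:well_posed_trace} rewrites $S_{t_1} f_0(X_1,v)$ as an integral of $\gamma_+ S_{t_1} f_0(X_1,u)$ against $R(u \to v; X_1)\, |u \cdot n_{X_1}| \, \d u$. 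Applying \eqref{eq:upper_bound_Duhamel} again to each $\gamma_+ S_{t_1} f_0(X_1,u)$, tracing back along $-u$ to $X_2 := q(X_1,-u)$, and so on, after $n$ bounces one obtains a pointwise lower bound of the form $e^{-\sigma_\infty t}$ times an iterated integral of $f_0$ against products of boundary kernels, since the loss exponential along each leg is bounded below by $e^{-\sigma_\infty \times (\text{leg duration})}$ and the successive legs partition $[0,t]$. Incorporating the no-bounce contribution coming from the first line of \eqref{eq:Duhamel}, which inherits the same uniform factor, one recognises precisely $e^{-\sigma_\infty t}$ times the trajectorial representation of $T_t f_0(x,v)$ from \cite{Bernou_2020, Bernou_2021}, yielding the pointwise comparison
\begin{align*}
S_t f_0(x,v) \ge e^{-\sigma_\infty t}\, T_t f_0(x,v), \qquad \text{a.e. } (x,v) \in G,\ f_0 \in L^1(G),\ f_0 \ge 0.
\end{align*}

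With this bound in hand, the conclusion follows directly from the Doeblin-Harris condition for $(T_t)_{t \ge 0}$ proved in \cite{Bernou_2020, Bernou_2021}: for every $\Lambda \ge 2$ there exist $T(\Lambda) = \xi \Lambda$ and a non-negative, not identically zero measure $\tilde \nu$ on $G$ with $\langle \tilde \nu \rangle \le 1$ satisfying $T_{T(\Lambda)} f_0(x,v) \ge \tilde \nu(x,v) \int_{\{\langle y,w\rangle \le \Lambda\}} f_0(y,w) \, \d y \, \d w$. Setting $\nu := e^{-\sigma_\infty T(\Lambda)} \tilde \nu$ yields \eqref{eq:Doeblin-Harris} and preserves $\langle \nu \rangle \le 1$. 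The main obstacle is the careful measure-theoretic justification of the iterated lower bound through an arbitrary number of bounces, in particular summing over bounce counts and controlling accumulation near tangential boundary hits; this is precisely the construction already rigorously carried out for the free-transport case in \cite{Bernou_2020, Bernou_2021}, and the addition of the uniform factor $e^{-\sigma_\infty t}$ introduces no new difficulty.
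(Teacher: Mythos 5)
Your proposal is correct and follows essentially the same route as the paper: both start from the Duhamel lower bound \eqref{eq:upper_bound_Duhamel}, iterate it through successive boundary reflections, bound each killing exponential by $e^{-\sigma_\infty \tau_i}$, and reduce the statement to the free-transport Doeblin--Harris construction of \cite{Bernou_2020, Bernou_2021}. The only (cosmetic) difference is that you sum the leg durations to package the comparison as the clean pointwise bound $S_t f_0 \ge e^{-\sigma_\infty t} T_t f_0$ and then cite the free-transport theorem as a black box, whereas the paper keeps the factors $e^{-\sigma_\infty \tau_i}$ attached to each leg and redoes the adaptation inline, noting that these factors are harmless because the times of the relevant trajectories are truncated to a finite interval.
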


The proof follows from a direct adaptation of the ones of \cite[Theorem 21]{Bernou_2021} for case \CLBC \space and of \cite[Theorem 4.2]{Bernou_2020} in case \MBC. We only give the first step to emphasize the modification.

\begin{proof}[Sketch of proof]
For all $t > 0$, $(x,v) \in \bar{G}$, we write $f(t,x,v) = S_t f_0(x,v)$. For the sake of simplicity we simply write $f(t,x,v)$ for $\gamma f(t,x,v)$ for $(t,x,v) \in \R_+ \times \Sigma$.

\vspace{.3cm}

We let $(t,x,v) \in (0,\infty) \times G$ and compute a first lower-bound for $f(t,x,v)$. Recall the definitions of $\tau$ from \eqref{eq:def_tau} and $q$ from \eqref{eq:def_q}. By \eqref{eq:upper_bound_Duhamel}, we have
\[ f(t,x,v) \ge  e^{-\int_0^{\tau(x,-v)} \sigma(x-sv) \, \d s} f(t - \tau(x,-v),q(x,-v),v) \mathbf{1}_{\{t \ge \tau(x,-v)\}}. \]
Set $y_0 = q(x,-v)$, $\tau_0 = \tau(x,-v)$. We have, using the boundary conditions and \eqref{eq:upper_bound_Duhamel} again,
\begin{align*}
	f(t,x,v) &\ge \mathbf{1}_{\{\tau_0 \le t\}} e^{-\sigma_\infty \tau_0} f(t - \tau_0, y_0,v) \\
	&\ge \mathbf{1}_{\{\tau_0 \le t\}} e^{-\sigma_\infty  \tau_0} \int_{\Sigma_+^{y_0}} f(t- \tau_0,y_0,v_0) \, |v_0 \cdot n_{y_0}| \, R(v_0 \to v; y_0) \, \d v_0 \\
	&\ge \mathbf{1}_{\{\tau_0 \le t\}} e^{-\sigma_\infty  \tau_0} \int_{\Sigma_+^{y_0}} e^{-\sigma_\infty  \tau(y_0, -v_0)} f(t- \tau_0 - \tau(y_0,-v_0),q(y_0,-v_0),v_0) \, \,\\
	&\qquad \times  \mathbf{1}_{\{\tau_0 + \tau(y_0,-v_0) \le t\}} |v_0 \cdot n_{y_0}| \, R(v_0 \to v; y_0) \, \d v_0 \\
	&\ge \mathbf{1}_{\{\tau_0 \le t\}}  e^{-\sigma_\infty  \tau_0} \int_{\Sigma_+^{y_0}} \mathbf{1}_{\{\tau_0 + \tau(y_0,-v_0) \le t\}} \,  |v_0 \cdot n_{y_0}| \, R(v_0 \to v; y_0) \, \\
	&\qquad \times e^{-\sigma_\infty  \tau(y_0, -v_0)} \int_{\Sigma_+^{q(y_0,-v_0)}} |v_1 \cdot n_{q(y_0,-v_0)}| \, R\big(v_1 \to v_0; q(y_0,-v_0)\big) \\ &\qquad  \times f(t- \tau_0 - \tau(y_0,-v_0),q(y_0,-v_0),v_1) \, \d v_1 \, \d v_0.
\end{align*}

From there, the proof in case \CLBC \space (including in the case $(\rp, \rt) = (1,1)$) is a straightforward adaptation of \cite[Proof of Theorem 21]{Bernou_2021}, the only difference being the presence of extra constants $e^{-\sigma_\infty \tau_i}$ for various time intervals $\tau_i$ appearing from the repeated use of the Duhamel formula \eqref{eq:Duhamel}. Those are easy to treat since the proof ultimately uses a truncation of the space of integration of those times on a finite interval. 

\noindent For the case \MBC, the proof follows similarly from a direct adaptation of \cite[Proof of Theorem 4.2]{Bernou_2020}, as the only features of $M$ used there are the continuity on $\partial \Omega \times \R^d$ and the positivity, both granted by assumptions. 
\end{proof} 


\begin{rmk}[Constructive property of $\nu$]
	\label{rmk:nu_constructive}
	As in \cite[Remark 22]{Bernou_2021} and \cite[Remark 8]{Bernou_2020}, even though some compactness arguments are used in the previous proof, constructive lower bounds can be derived at least in the simple case where $\Omega$ is the unit disk, see \cite[Remark 8]{Bernou_2020}. Note that the control of the additional factors due to the jumps (the ones such as $e^{-\sigma_\infty \tau_0}$) are explicit and do not rely on a compactness argument. In general, we thus expect to be able to find a constructive lower bound for any given $\Omega$.
\end{rmk} 

The following corollary allows us to relate the Doeblin-Harris condition with the weights used in Section \ref{sec:Lyapunov}.

\begin{coroll}
	\label{coroll:Doeblin-Harris}
	Under Hypotheses \ref{hypo:boundary} and \ref{hypo:k}, there exists $\Lambda_0 > 0$ such that for all $\Lambda \ge \Lambda_0$, there exist $T(\Lambda) > 0$ and a non-negative measure $\nu$ on $G$, depending on $\Lambda$, with $\nu \not \equiv 0$, such that for all $(x,v) \in G$, for all $f_0 \in L^1(G)$, $f_0 \ge 0$, 
	\begin{align}
		\label{eq:Doeblin-Harris_coroll}
		S_{T(\Lambda)} f_0(x,v) \ge \nu(x,v) \int_{D_\Lambda} f_0(y,w) \, \d y \, \d w,
	\end{align}
	where $D_\Lambda = \{(y,w) \in G: m_1(y,w) \le \Lambda \}$.
	Moreover, $\nu$ satisfies $\langle \nu \rangle \le 1$ and there exists $\xi > 0$ such that for all $\Lambda \ge 2$, $T(\Lambda) = \xi \Lambda$. 
\end{coroll}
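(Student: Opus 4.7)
The plan is to reduce Corollary \ref{coroll:Doeblin-Harris} directly to Theorem \ref{thm:Doeblin-Harris} by showing that the sublevel sets of $m_1$ are contained in sublevel sets of $\langle \cdot, \cdot \rangle$ (up to rescaling $\Lambda$), and then using that, for a non-negative $f_0$, integrating over a smaller set only decreases the lower bound. The only real work is to control $\tau(y,w)$ on $D_\Lambda$, using the structural inequality that allowed $m_1$ to be non-negative in the first place.

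First I would observe that, by definition of $d(\Omega)$, one has $\tau(y,-w) \le d(\Omega)/|w|$ for $(y,w) \in \bar G$, and since $c_4 \in (0,1)$ the quantity $\frac{d(\Omega)}{|w|c_4} - \tau(y,-w)$ is non-negative. Consequently, on $D_\Lambda = \{m_1 \le \Lambda\}$, each of the (non-negative) pieces of $m_1(y,w)$ is bounded by $\Lambda$. Focusing on the term involving $\tau(y,-w)$, the bound $\frac{d(\Omega)}{|w|c_4} - \tau(y,-w) \le \Lambda$ combined with $\tau(y,-w) \le d(\Omega)/|w|$ yields
\[ \frac{d(\Omega)}{|w|} \Bigl( \frac{1}{c_4} - 1 \Bigr) \le \Lambda, \qquad \text{i.e.} \qquad |w| \ge \frac{d(\Omega)(1-c_4)}{c_4 \Lambda}. \]
This lower bound on $|w|$ is the key step: it transfers control on $\frac{d(\Omega)}{|w|c_4} - \tau(y,-w)$ into a genuine upper bound on $\tau(y,w) \le d(\Omega)/|w| \le \frac{c_4 \Lambda}{1-c_4}$. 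Combined with $|w|^{2\delta} \le \Lambda$, this gives $\langle y, w \rangle = 1 + \tau(y,w) + |w|^{2\delta} \le C \Lambda$ for some constant $C = C(c_4) > 0$ and all $\Lambda$ large enough. Hence, for $\Lambda \ge \Lambda_0$ (with $\Lambda_0$ chosen so that $C\Lambda \ge 2$ and $\Lambda \ge e^2$),
\[ D_\Lambda \subset \bigl\{ (y,w) \in G : \langle y, w \rangle \le C \Lambda \bigr\}. \]

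With this inclusion in hand, I would apply Theorem \ref{thm:Doeblin-Harris} with parameter $\Lambda' := C \Lambda \ge 2$. It produces a time $T(\Lambda') = \xi \Lambda'$ and a non-negative measure $\nu \not\equiv 0$ with $\langle \nu \rangle \le 1$ such that, for every $(x,v) \in G$ and $f_0 \in L^1(G)$ with $f_0 \ge 0$,
\[ S_{T(\Lambda')} f_0(x,v) \ge \nu(x,v) \int_{\{\langle y, w \rangle \le \Lambda'\}} f_0(y,w) \, \d y \, \d w \ge \nu(x,v) \int_{D_\Lambda} f_0(y,w) \, \d y \, \d w, \]
where the second inequality uses the inclusion above and the non-negativity of $f_0$. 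Setting the time of Corollary \ref{coroll:Doeblin-Harris} to be $T(\Lambda) := \xi C \Lambda$ gives the advertised linear-in-$\Lambda$ dependence with $\xi' := \xi C$, and the measure $\nu$ (and its bound $\langle \nu \rangle \le 1$) is inherited from Theorem \ref{thm:Doeblin-Harris}.

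I do not anticipate any serious obstacle: the only subtlety is the initial algebraic manipulation that converts the sign-indefinite weight $m_1$ into a usable lower bound on $|w|$, which is precisely why the weights were defined with the shift $\frac{d(\Omega)}{|w|c_4} - \tau(y,-w)$ rather than $\tau(y,v)$ (cf.\ the remark following \eqref{eq:def_m_alpha}). Everything else reduces to set inclusion and monotonicity of the integral.
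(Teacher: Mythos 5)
Your proof is correct, and its overall architecture is the same as the paper's: establish that the sublevel set $D_\Lambda$ of $m_1$ is contained in a sublevel set of $\langle \cdot,\cdot\rangle$, then invoke Theorem \ref{thm:Doeblin-Harris} and shrink the domain of integration using $f_0 \ge 0$. The difference lies in how the inclusion is obtained. The paper uses the sharper geometric fact that $\tau(x,v) + \tau(x,-v) \le d(\Omega)/|v|$ (the forward and backward exit times sum to the chord length over the speed), which immediately gives the \emph{pointwise} domination
\begin{align*}
m_1(x,v) = e^2 + \frac{d(\Omega)}{|v|c_4} - \tau(x,-v) + |v|^{2\delta} \ \ge\ 1 + \tau(x,v) + |v|^{2\delta} = \langle x, v\rangle,
\end{align*}
so that $D_\Lambda \subset \{\langle y,w\rangle \le \Lambda\}$ with the \emph{same} $\Lambda$ and Theorem \ref{thm:Doeblin-Harris} applies directly with $T(\Lambda)=\xi\Lambda$. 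You instead use only the one-sided bound $\tau(y,-w)\le d(\Omega)/|w|$, extract from $m_1\le\Lambda$ a lower bound on $|w|$, and convert it into $\tau(y,w)\le c_4\Lambda/(1-c_4)$; this is valid but costs a constant, forcing you to apply the theorem at level $\Lambda'=C\Lambda$ and to absorb $C$ into $\xi$. Both yield the stated corollary (the linear form $T(\Lambda)=\xi'\Lambda$ survives the rescaling); the paper's route is just the one-line version of the same idea, and it makes transparent why the shift $\frac{d(\Omega)}{|v|c_4}-\tau(x,-v)$ was built into the weight. One cosmetic remark: the paper chooses $\Lambda_0$ so that $D_{\Lambda_0}$ has positive Lebesgue measure (needed for the corollary to be non-vacuous in its later use in the Harris argument), whereas your choice of $\Lambda_0$ only guarantees the algebra goes through; this does not affect the validity of the statement as written.
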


\begin{proof}
	For all $x \in G$, $m_1(x,v) \to \infty$ as $|v| \to \infty$. Hence there exists $\Lambda_0 > 0$ such that, denoting by $\lambda$ the Lebesgue measure on $\R^d \times \R^d$, $\lambda\{(y,w) \in G, m_1(y,w) \le \Lambda_0\} > 0$. Since $c_4 < 1$ and 
	\begin{align*}
		\tau(x,v) + \tau(x,-v) \le \frac{d(\Omega)}{|v|} \le \frac{d(\Omega)}{|v|c_4}
	\end{align*}
	by definition of $\tau$, we have
	\begin{align*}
		m_1(x,v) = \Big(e^2 + \frac{d(\Omega)}{|v|c_4} - \tau(x,-v) + |v|^{2\delta} \Big) \ge \Big( 1 + \tau(x,v) + |v|^{2\delta} \Big) = \langle x,v \rangle.
	\end{align*}
	Hence, for $\Lambda \ge \Lambda_0$, we have $D_\Lambda \subset \{(x,v) \in G: \langle x, v \rangle \le \Lambda\}$ and $D_\Lambda \ne \emptyset$. The conclusion then follows from Theorem \ref{thm:Doeblin-Harris}. 
	\end{proof}

\subsection{Proof of Theorem \ref{thm:main_1}}
\label{subsec:proof_thm_1}

From the Lyapunov conditions, Proposition \ref{prop:lyapunov} and the Doeblin-Harris condition, Corollary \ref{coroll:Doeblin-Harris}, the proof of Theorem \ref{thm:main_1} follows from Harris-type theorems. We assume for simplicity that $g \equiv 0$, so that $f \in L^1_{m_\alpha}(G)$ with $\langle f \rangle = 0$ in what follows.

\vspace{.3cm} 

More precisely, the demonstration of the polynomial result \eqref{eq:polynomial_cvg_difference}  is obtained exactly as for the free-transport case \cite[Section 5]{Bernou_2021}: note that this proof only uses that the semigroup is stochastic, that the weights considered are superlinear, and that the subgeometric Lyapunov inequality \eqref{eq:Lyapunov_subgeom} and the Doeblin-Harris condition \eqref{eq:Doeblin-Harris_coroll} hold. In fact, the only difference is that we use, for $(x,v) \in G$, $\alpha \in (1,d)$, weights of the form
\[ m_\alpha(x,v) = \Big(e^2 + \frac{d(\Omega)}{|v| c_4}- \tau(x,-v) + |v|^{2\delta} \Big)^\alpha \]
rather than $w_\alpha(x,v) = (1 + \tau(x,v) + |v|^{2\delta})^\alpha$, but this difference is not seen at the level of the proof, which only uses the asymptotic behavior of the weights as $|v| \to +\infty$ once \eqref{eq:Lyapunov_subgeom} and Corollary \ref{coroll:Doeblin-Harris} are established.  

Alternatively, the polynomial result can be obtained by applying \cite[Theorem 5.6]{Canizo_2023}, since Proposition \ref{prop:lyapunov} provides, in the words of those authors, a weak generator Lyapunov condition, and Corollary \ref{coroll:Doeblin-Harris} gives a Harris irreducibility condition.  

\vspace{.3cm}

For completeness, and because the argument is less redundant with the one in \cite{Bernou_2021} in this case, we provide a proof of \eqref{eq:exponential_cvg_difference}. We use the approach of \cite[Proof of Theorem 3.2]{Canizo_2023}. We start with the following lemma:

\begin{lemma}
	\label{lemma:contraction_mu}
	Under Hypothesis \ref{hypo:boundary}-\ref{hypo:sigma_below}, for all $\alpha \in (1,d)$, there exist $T > 0$, $\mu > 0$, $\gamma_0 \in (0,1)$ such that, setting
	\begin{align*}
		\vertiii{\cdot}_\mu := \|\cdot\|_{L^1} + \mu \|\cdot\|_{m_\alpha},
	\end{align*}
	one has, for all $f \in L^1_{m_\alpha}(G)$ with $\langle f \rangle = 0$, 
	\begin{align}
		\label{eq:contraction_mu_norm}
		\vertiii{S_T f}_\mu \le \gamma_0 \vertiii{f}_\mu. 
	\end{align}
\end{lemma}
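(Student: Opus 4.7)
The plan is to combine the exponential Lyapunov inequality \eqref{eq:Lyapunov_expo} with the Doeblin-Harris minoration of Corollary \ref{coroll:Doeblin-Harris}, following the standard Harris-type bookkeeping as in \cite{Canizo_2023}. I would derive separately a contraction in $\|\cdot\|_{m_\alpha}$ modulo an $L^1$ remainder and a contraction in $\|\cdot\|_{L^1}$ modulo an $m_\alpha$ remainder, then balance the two using the parameter $\mu$.

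First I would upgrade \eqref{eq:Lyapunov_expo} to a contractive form at a fixed time. Starting from the differential inequality derived in Step 7 of the proof of Proposition \ref{prop:lyapunov}, namely $\frac{d}{dt}\|S_t f\|_{m_\alpha} \le -\sigma_0 \|S_t f\|_{m_\alpha} + C\|f\|_{L^1} + C B_t$, with $B_t$ controlled in integrated form by Lemma \ref{lemma:control_flux}, Gronwall's lemma yields, for every $T>0$,
\[ \|S_T f\|_{m_\alpha} \le e^{-\sigma_0 T}\|f\|_{m_\alpha} + K_T \|f\|_{L^1}, \qquad K_T = O(T). \]
Picking $T$ sufficiently large makes $\eta := e^{-\sigma_0 T}$ arbitrarily small.

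Next I would set $\Lambda := T/\xi$ so that the Doeblin-Harris time in Corollary \ref{coroll:Doeblin-Harris} matches $T$, assuming $T$ large enough that $\Lambda \ge \Lambda_0$. Using $\langle f \rangle = 0$, I Hahn-decompose $f = f_+ - f_-$, with $\|f_\pm\|_{L^1} = \tfrac12 \|f\|_{L^1}$, and set $a := \min\bigl(\int_{D_\Lambda} f_+,\, \int_{D_\Lambda} f_-\bigr)$. The minoration \eqref{eq:Doeblin-Harris_coroll} gives $S_T f_\pm \ge a\,\nu$, so both $S_T f_\pm - a\nu$ are non-negative; subtracting them and using the $L^1$ contraction \eqref{eq:contraction_L_1},
\[ \|S_T f\|_{L^1} \le \|f\|_{L^1} - 2a\langle \nu \rangle. \]
Since $m_\alpha = m_1^\alpha > \Lambda^\alpha$ on $D_\Lambda^c$, one has $\int_{D_\Lambda^c} f_\pm \le \Lambda^{-\alpha}\|f_\pm\|_{m_\alpha}$, hence $a \ge \tfrac12 \|f\|_{L^1} - \Lambda^{-\alpha}\|f\|_{m_\alpha}$, and therefore
\[ \|S_T f\|_{L^1} \le (1-\langle \nu \rangle)\|f\|_{L^1} + 2\langle \nu \rangle \Lambda^{-\alpha}\|f\|_{m_\alpha}. \]

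Finally, combining both bounds,
\[ \vertiii{S_T f}_\mu \le \bigl(1 - \langle \nu \rangle + \mu K_T\bigr)\|f\|_{L^1} + \mu\Bigl(\eta + \tfrac{2\langle \nu \rangle \Lambda^{-\alpha}}{\mu}\Bigr)\|f\|_{m_\alpha}, \]
it remains to pick $\gamma_0 \in (\max(\eta,\, 1-\langle \nu \rangle),\, 1)$ and $\mu$ inside the interval $\bigl[\frac{2\langle \nu \rangle \Lambda^{-\alpha}}{\gamma_0 - \eta},\, \frac{\gamma_0 - 1 + \langle \nu \rangle}{K_T}\bigr]$ to obtain $\vertiii{S_T f}_\mu \le \gamma_0 \vertiii{f}_\mu$. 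The main obstacle is ensuring this interval is non-empty, i.e., $\Lambda^\alpha \gtrsim K_T$; since $\Lambda = T/\xi$, $\alpha > 1$, and $K_T$ grows only linearly in $T$, this is secured by taking $T$ large enough, which also makes $\eta$ small and $\Lambda \ge \Lambda_0$, closing the argument.
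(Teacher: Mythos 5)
Your proof is correct and rests on the same two pillars as the paper's (the exponential Lyapunov inequality \eqref{eq:Lyapunov_expo} and the Doeblin--Harris minorization of Corollary \ref{coroll:Doeblin-Harris}, balanced through the norm $\vertiii{\cdot}_\mu$), but the bookkeeping is genuinely different in two respects. First, you extract the fixed-time contraction $\|S_Tf\|_{m_\alpha}\le e^{-\sigma_0 T}\|f\|_{m_\alpha}+K_T\|f\|_{L^1}$ by Gr\"onwall applied to the differential inequality of Step 7 of Proposition \ref{prop:lyapunov} (the convolution against $e^{-\sigma_0(T-s)}$ is harmless for the flux term since the kernel is bounded by $1$, so Lemma \ref{lemma:control_flux} still gives $K_T=O(T)$); the paper instead reinjects the integrated inequality into itself and only obtains the factor $1/(1+\sigma_0 T)$ --- both suffice, yours is slightly sharper. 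Second, you avoid the paper's dichotomy between $\|f\|_{m_\alpha}\le A\|f\|_{L^1}$ and its complement by running the minorization argument unconditionally: your bound $\|S_Tf\|_{L^1}\le(1-\langle\nu\rangle)\|f\|_{L^1}+2\langle\nu\rangle\Lambda^{-\alpha}\|f\|_{m_\alpha}$ is valid for every mean-zero $f$ (the case $a<0$ being vacuous but harmless), and the case analysis is replaced by the non-emptiness of the admissible interval for $\mu$. This is cleaner, but one point should be made explicit: the threshold for that interval to be non-empty is $2\langle\nu\rangle K_T\le(\gamma_0-\eta)(\gamma_0-1+\langle\nu\rangle)\Lambda^{\alpha}$, and since $\nu$ (hence $\langle\nu\rangle$) depends on $\Lambda=T/\xi$ with no lower bound provided by Corollary \ref{coroll:Doeblin-Harris}, you cannot fix $\gamma_0$ independently of $\langle\nu\rangle$; choosing for instance $\gamma_0=1-\tfrac{\langle\nu\rangle}{2}$ makes the factors of $\langle\nu\rangle$ cancel and reduces the requirement to $\Lambda^{\alpha}\ge \tfrac{4K_T}{\gamma_0-\eta}$, which your observation $\alpha>1$, $K_T=O(\Lambda)$ then settles for $T$ large. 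With that one-line fix the argument closes.
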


\medskip

\begin{proof}[Proof of Lemma \ref{lemma:contraction_mu}.]

\textbf{Step 1: Reformulation of the Lyapunov inequality.} 
Let $t > 0$. Recall that $\alpha \in (1,d)$ is given. By Proposition \ref{prop:lyapunov}, and more precisely equation \eqref{eq:Lyapunov_expo}, we have, for any $f \in L^1_{m_\alpha}(G)$, 
\begin{align}
	\label{eq:basic_Lyap_expo}
	\| S_t f\|_{m_\alpha} + \sigma_0 \int_0^t \|S_s f\|_{m_\alpha} \, \d s \le \|f\|_{m_\alpha} + K_2 (1+t) \|f\|_{L^1}.  
\end{align}
Note that in particular, for all $s \in (0,t)$ 
\begin{align*}
	\|S_{t-s} S_s f\|_{m_\alpha} \le \|S_s f\|_{m_\alpha} + K_2 (1 + t-s) \|S_s f\|_{L^1}
\end{align*}
which rewrites 
\begin{align}
	\label{eq:m_1_traffic}
	\|S_t f\|_{m_\alpha} - K_2(1+t-s) \|S_s f\|_{L^1} \le \|S_s f\|_{m_\alpha},
\end{align}
and injecting \eqref{eq:m_1_traffic} inside \eqref{eq:basic_Lyap_expo} gives
\begin{align*}
	\|S_t f\|_{m_\alpha} + \sigma_0 \int_0^t \Big( \|S_t f \|_{m_\alpha} - K_2 (1+t-s) \|S_s f\|_{L^1} \Big) \d s \le \|f\|_{m_\alpha} + K_2(1+t) \|f\|_{L^1}. 
\end{align*}
Using also the $L^1$ contraction from Theorem \ref{thm:well_posed_trace}, we obtain
\begin{align*}
	\|S_t f\|_{m_\alpha} \le \frac{1}{1+\sigma_0 t} \|f\|_{m_\alpha} + \frac{K_2}{1 + \sigma_0 t} \big( \sigma_0 \tfrac{t^2}{2} + (1+\sigma_0) t + 1\big) \|f\|_{L^1},
\end{align*}
and ultimately, for some constant $K_3 > 0$, 
\begin{align}
	\label{eq:final_rewriting_Lyap}
	\|S_t f\|_{m_\alpha} \le \frac{1}{1+\sigma_0 t} \|f\|_{m_\alpha} + K_3 (1+t) \|f\|_{L^1}.
\end{align}

We note that the combination of \eqref{eq:final_rewriting_Lyap} and Theorem \ref{thm:Doeblin-Harris} already fits \cite[Section 3]{Canizo_2023}, so that one can readily apply their results. To facilitate the task of the reader, we nevertheless present a proof starting from those two results, in particular because our Doeblin-Harris condition, Corollary \ref{coroll:Doeblin-Harris} is slightly non-standard. 

\medskip 

\textbf{Step 2: describing two alternatives.} According to Corollary \ref{coroll:Doeblin-Harris}, for all $\rho > 2$, there exists $T(\rho) = \xi \rho$ for some constant $\xi > 0$ and a non-negative measure $\nu$ on $G$ with $\nu \not \equiv 0$, $\langle \nu \rangle \le 1$ such that 
\[ S_{T(\rho)} h \ge \nu \int_{\{(x,v) \in G, m_1(x,v) \le \rho\}} h \, \d v \, \d x, \]
for all $h \in L^1(G)$ with $h \ge 0$. 

By assumption, $f \in L^1_{m_\alpha}(G)$ and $\langle f \rangle = 0$. We set, for any $\rho > 0$, $\kappa(\rho) = K_3(1+T(\rho))$. Since $T(\rho) = \xi \rho$ for some constant $\xi > 0$, $\kappa(\rho) \underset{\rho \to \infty}{\sim} C \rho$ for some $C > 0$. Since $\alpha \in (1,d)$, one can find $\rho_0$ such that, for all $\rho > \rho_0$, $T(\rho) > 1$, $\kappa(\rho) > 1$ and $\rho^{\alpha} \ge \frac{4 \kappa(\rho)}{1-\tfrac{1}{1+\sigma_0}}$. We fix $\rho > \rho_0$, $T = T(\rho) > T(\rho_0) =: T_0$ for the remaining part of the proof. Note that, since $T(\rho) = \xi \rho$ for some constant $\xi$, any choice of $T > T(\rho_0)$ is possible. We set $A := \frac{\rho^\alpha}{4}$ and define, for $\mu > 0$ to be chosen, the $\mu$-norm by
\[ \vertiii{f}_{\mu} := \|f\|_{L^1} + \mu \|f\|_{m_\alpha}.  \]
We distinguish two cases. Indeed, we have the alternative: 
\begin{subequations}
	\begin{align}
		\|f\|_{m_\alpha} &\le A \|f\|_{L^1}, \label{eq:Step_2_1} \\
		\hbox{ or } \|f\|_{m_\alpha} &> A \|f\|_{L^1}.  \label{eq:Step_2_2}
	\end{align}
\end{subequations}

\medskip 

\textbf{Step 3: alternative \eqref{eq:Step_2_1}.} 
We prove a convergence result in the $\mu$-norm in the case of the first alternative, \eqref{eq:Step_2_1}. Set, for all $\Lambda > 0$, $D_\Lambda = \{(x,v) \in G, m_1(x,v) \le \Lambda\}$. Using $\langle f \rangle = 0$, that $m_\alpha \equiv m_1^\alpha$ and Corollary \ref{coroll:Doeblin-Harris}, we have, for all $(x,v) \in G$, 
\[ \begin{aligned}
	S_Tf_{\pm}(x,v) &\geq \nu(x,v) \int_{G} f_{\pm}(x',v') \,  \d v' \d x' - \nu(x,v) \int_{D_{\rho}^c} f_{\pm}(x',v') \, \d v' \d x' \\
	& \geq \frac{\nu(x,v)}{2} \int_G |f(x',v')| \, \d v' \d x'  - \nu(x,v) \int_{D_{\rho}^c} |f(x',v')|\, \d v' \d x' \\
	& \geq \frac{\nu(x,v)}{2} \int_G |f(x',v')| \, \d v' \d x'  - \frac{\nu(x,v)}{\rho^\alpha} \int_{G}  |f(x',v')| m_\alpha(x',v') \, \d v' \d x' \\
	& \geq \frac{\nu(x,v)}{2} \int_G |f(x',v')| \, \d v' \d x'  - \frac{\nu(x,v)}{4}  \int_{G}  |f(x',v')| \, \d v' \d x' \\
	&= \frac{\nu(x,v)}{4}  \int_{G}  |f(x',v')| \, \d v' \d x' =: \bar \nu(x,v),
\end{aligned} \]
where the third inequality is given by the fact that $D_{\rho}^c = \{(x,v) \in G, m_\alpha(x,v)/\rho^\alpha \ge 1\}$. The last inequality is obtained by condition (\ref{eq:Step_2_1}). The final equality stands for a definition of $\bar \nu(x,v)$ for all $(x,v) \in G$. Note that $\bar \nu \geq 0$ on $G$.
We deduce,
\[ \begin{aligned}
	|S_T f| &= |S_T f_+ - \bar \nu- (S_T f_- - \bar \nu)| \\
	&\leq |S_T f_+ - \bar \nu| + |S_T f_- - \bar \nu| \\
	&= S_T f_+ + S_T f_- - 2 \bar \nu = S_T|f| - 2 \bar \nu,
\end{aligned} \]
and, integrating over $G$, we have, using the contraction property, that $\bar \nu = \frac{\nu}{4} \|f\|_{L^1}$, and that $\nu$ is non-negative with $\langle \nu \rangle \le 1$,
\begin{equation}\begin{aligned}
		\label{IneqContractionDoeblin}
		\|S_T f \|_{L^1} \leq \|f\|_{L^1} - 2 \|\bar \nu\|_{L^1} = \Big( 1- \frac{\langle \nu \rangle}{2} \Big) \|f\|_{L^1} = \underline{\nu} \|f\|_{L^1},
\end{aligned}\end{equation}
with $\underline{\nu} \in (0, 1)$. Hence, $S_T$ is a strict contraction in $L^1$ in the case where $f$ satisfies \eqref{eq:Step_2_1}.
Writing $\gamma := 1/(1+\sigma_0 T) < 1$ in \eqref{eq:final_rewriting_Lyap} and using the definition of $\kappa(\rho)$, we derive an inequality on the $\mu$-norm of $S_T f$, 
\[ \begin{aligned}
	\vertiii{S_T f}_{\mu} &= \|S_T f\|_{L^1} + \mu \|S_T f\|_{m_\alpha} \\
	&\leq \underline{\nu} \|f\|_{L^1} + \mu \big( \gamma \|f\|_{m_\alpha} + \kappa(\rho) \|f\|_{L^1} \big) \\
	&\leq \big(\underline{\nu} + \mu \kappa(\rho) \big)\|f\|_{L^1} + \mu \gamma \|f\|_{m_\alpha}.
\end{aligned} \]
Finally, we choose $0 < \mu \leq \frac{1 - \underline{\nu}}{2 \kappa} < 1$ and deduce
\begin{equation}\begin{aligned}
		\label{eq:end_alternative_1}
		\vertiii{S_T f}_{\mu} \le \gamma_1 \vertiii{f}_{\mu}
\end{aligned}
\end{equation}
with $\gamma_1 := \min(\gamma \mu, \frac{1 + \underline{\nu}}{2}) < 1$.  

\medskip 

\textbf{Step 4: alternative \eqref{eq:Step_2_2}.} By choice of $T > 1$ and $\rho$ in Step 2, we have, with $\gamma$ as before, 
\begin{align}
	\label{eq:temp_rho}
	\frac{\rho^\alpha}{4 \kappa(\rho)} > \frac{1}{1 - \gamma}.
\end{align} 
By choice of $A$, a direct use of \eqref{eq:final_rewriting_Lyap} leads to
\begin{align*}
	\|S_T f\|_{m_\alpha} &\le \gamma \|f\|_{m_\alpha} + \kappa(\rho) \|f\|_{L^1} \\
	&\le \gamma \|f\|_{m_\alpha} + \frac{\kappa(\rho)}{A} \|f\|_{m_\alpha} \\
	&\le \big(\gamma + 4 \frac{\kappa(\rho)}{\rho^\alpha}\big) \|f\|_{m_\alpha} \le \tilde \gamma \|f\|_{m_\alpha}, 
\end{align*}
with $0 < \tilde \gamma := 4 \kappa(\rho)/\rho^\alpha + \gamma < 1$ by \eqref{eq:temp_rho}. Hence
\begin{align*}
	\vertiii{S_T f}_{\mu} &= \|S_T f\|_{L^1} + \mu \|S_T f\|_{m_\alpha} \\
	&\le \|f\|_{L^1} + \mu \tilde \gamma \|f\|_{m_\alpha} \\
	&\le (1 - \mu \epsilon_0) \|f\|_{L^1} + \mu(\tilde \gamma + \epsilon_0) \|f\|_{m_\alpha},
\end{align*}
where we used $m_\alpha \ge 1$ to obtain the last inequality. Using that $\tilde \gamma < 1$, we can choose $\epsilon_0 > 0$ small enough ($\mu$ is fixed by the previous step) so that
\begin{align*}
	\vertiii{S_T f}_{\mu} \le \gamma_2 \vertiii{f}_{\mu},
\end{align*}
with $\gamma_2 := \min(1- \mu \epsilon_0, \tilde \gamma + \epsilon_0) < 1$. 

\medskip 

\textbf{Step 5: conclusion.} 
We set $\gamma_0 = \max(\gamma_1, \gamma_2) < 1$ (which depends on our choice of $\alpha$) to complete the proof. 
\end{proof}

From there, the proof follows by a semigroup argument. The extension to $\alpha$ in $(0,d)$ is obtained by an interpolation argument. The key tool for this is the following corollary applicable to spaces of the form $\{f \in L^1_w(G), \langle f \rangle = 0\} $ with $w \ge 1$ some weight on $G$. We denote $\vertiii{H}_{A\to B}$ the operator norm of $H$ acting between the two Banach spaces $A$ and $B$. 

\begin{coroll}\cite[Corollary 3]{Bernou_2020}
	\label{coroll:Interpol}
	Let $\phi_1, \phi_2, \tilde{\phi}_1, \tilde{\phi}_2$ be four measurable functions on $G$ positive almost everywhere. Let also $A_1 = L^1_{\phi_1}(G)$, $A_2 = L^1_{\phi_2}(G)$, $\tilde{A}_1 = L^1_{\tilde{\phi}_1}(G)$, $\tilde{A}_2 = L^1_{\tilde{\phi}_2}(G)$. 
	Let, for all $\gamma \in (0,1)$, $\phi_{\gamma}$ and $\tilde{\phi}_{\gamma}$ be defined by
	\[ \phi_{\gamma} := \phi_1^{\gamma} \phi_2^{1-\gamma}, \qquad \tilde{\phi}_{\gamma} := \tilde{\phi}_1^{\gamma} \tilde{\phi}_2^{1-\gamma}, \]
	respectively, and $A_{\gamma} = L^1_{\phi_{\gamma}}(G)$, $\tilde{A}_{\gamma} = L^1_{\tilde{\phi_{\gamma}}}(G)$. Assume that there exists a bounded projection $\Pi: (A_i,\tilde{A}_i) \to (A_i',\tilde{A}_i')$ for $i \in \{1,2\}$ with $A_i' \subset A_i$, $\tilde{A}_i' \subset \tilde{A}_i$. Let also $A'_{\gamma} = (A'_1 + A'_2) \cap A_{\gamma}$ and $\tilde{A}'_{\gamma} = (\tilde{A}'_1 + \tilde{A}'_2) \cap \tilde{A}_{\gamma}$. Assume that $S$ is a linear operator from $A'_1$ to $\tilde{A}'_1$ and from $A'_2$ to $\tilde{A}'_2$ with
	\[ \vertiii{S}_{A'_1 \to \tilde{A}'_1} \leq N_1, \qquad \vertiii{S}_{A_2' \to \tilde{A}_2'} \leq N_2, \]
	for $N_1, N_2 > 0$. Then $S$ is a linear operator from $A'_{\gamma}$ to $\tilde{A}'_{\gamma}$ and there exists $C > 0$ depending only on $\Pi$ such that
	\[ \vertiii{S}_{A'_{\gamma} \to \tilde{A}'_{\gamma}} \leq C N_1^{\gamma} N_2^{1- \gamma}. \]
\end{coroll}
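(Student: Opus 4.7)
The plan is to recognize this statement as an instance of weighted $L^1$ interpolation, dressed up with a subspace/projection mechanism. The weights $\phi_\gamma = \phi_1^\gamma \phi_2^{1-\gamma}$ and $\tilde\phi_\gamma = \tilde\phi_1^\gamma \tilde\phi_2^{1-\gamma}$ are exactly the geometric interpolation weights that appear in the Stein--Weiss interpolation theorem for weighted $L^p$ spaces at a fixed exponent (here $p=1$). If $S$ were defined on all of $A_i$ and valued in $\tilde A_i$ (rather than only on the subspaces $A'_i$ with image in $\tilde A'_i$), the bound $\vertiii{S}_{A_\gamma \to \tilde A_\gamma}\le N_1^\gamma N_2^{1-\gamma}$ would be a direct invocation of Stein--Weiss. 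The projection $\Pi$ is precisely what is needed to reduce to this case.

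Concretely, I would set $\tilde S := S\circ \Pi$. Since $\Pi$ is a bounded projection from $A_i$ onto $A'_i\subset A_i$ and $S : A'_i \to \tilde A'_i \hookrightarrow \tilde A_i$ has norm $N_i$, the composition $\tilde S$ is a bounded linear map $A_i \to \tilde A_i$ with norm at most $N_i\,\vertiii{\Pi}_{A_i\to A'_i}$. Applying the classical Stein--Weiss weighted $L^1$ interpolation theorem (with exponent $1$ fixed and weights interpolated geometrically) to $\tilde S$ produces a bounded extension $\tilde S : A_\gamma \to \tilde A_\gamma$ of operator norm at most $C\, N_1^\gamma N_2^{1-\gamma}$, where $C := \max(\vertiii{\Pi}_{A_1\to A'_1},\vertiii{\Pi}_{A_2\to A'_2})$.

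It then remains to transfer this bound from $\tilde S$ back to $S$ itself on the subspace $A'_\gamma$. Given $f\in A'_\gamma = (A'_1+A'_2)\cap A_\gamma$, fix any decomposition $f = f_1+f_2$ with $f_i\in A'_i$. As $\Pi$ is a projection onto $A'_i$, one has $\Pi|_{A'_i} = \mathrm{id}$, and by linearity $\Pi f = f_1+f_2 = f$, so that $\tilde S f = S f$. Moreover $Sf = Sf_1+Sf_2 \in \tilde A'_1 + \tilde A'_2$, and the interpolation estimate gives $Sf\in \tilde A_\gamma$, so $Sf\in \tilde A'_\gamma$ and the norm bound $\vertiii{S}_{A'_\gamma\to \tilde A'_\gamma}\le C N_1^\gamma N_2^{1-\gamma}$ follows.

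The main obstacle is less conceptual than technical. First, one must verify that the version of Stein--Weiss needed genuinely applies in our weighted $L^1$ context on $G = \Omega\times\R^d$: this is standard since all weights $\phi_i,\tilde\phi_i$ are positive and measurable, but some care is required as they are unbounded (e.g.\ they may grow like $m_\alpha$ in the applications). Second, a subtler point is that the interpolated extension of $\tilde S$ on $A_\gamma$ must coincide with the natural pointwise formula $S\circ\Pi$ on $A'_\gamma$, so that the identification $\tilde S f = Sf$ used above is unambiguous; this follows from uniqueness of continuous extensions of operators that already agree on the large subspace $A_1\cap A_2$, and ultimately from the compatibility assumption built into $\Pi$ being a single operator acting on both $A_1$ and $A_2$.
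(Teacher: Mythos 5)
The paper does not prove this corollary: it is quoted verbatim from \cite[Corollary 3]{Bernou_2020}, so there is no in-paper proof to compare against. Your argument is correct and is essentially the expected one (and the one used in the cited reference): compose with the projection to get $\tilde S=S\circ\Pi$ bounded on the full weighted spaces $A_i\to\tilde A_i$, apply the Stein--Weiss interpolation theorem for weighted $L^1$ with geometrically interpolated weights, then restrict back to $A'_\gamma$ using $\Pi|_{A'_i}=\mathrm{id}$ to identify $\tilde S f=Sf$. The only comment is that your final worry about matching the ``interpolated extension'' with the pointwise formula is a non-issue: since $\phi_\gamma\ge\min(\phi_1,\phi_2)$ pointwise, one has $A_\gamma\subset A_1+A_2$ (split $f$ on the set $\{\phi_1\le\phi_2\}$ and its complement), so the interpolated operator is literally the restriction of $\tilde S$, already defined on $A_1+A_2$, and no extension or uniqueness argument is needed.
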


\begin{proof}[Proof of Theorem \ref{thm:main_1}.]
	\textbf{Step 1: Proof in the case $\alpha \in (1,d)$.} Let $T, \mu$ and $\gamma_0 \in (0,1)$ be given by Lemma \ref{lemma:contraction_mu}. Let $t > 0$ and write $j = \lfloor t/T \rfloor$. Then $j \in (t/T - 1, t/T]$ and using the $L^1$ contraction and \eqref{eq:contraction_mu_norm}
\begin{align*}
 \|S_t f\|_{L^1} = \|S_{t-jT} S_{jT} f\|_{L^1} \le \|S_{jT} f\|_{L^1} \le \vertiii{S_{jT} f}_{\mu} &\le e^{j \ln(\gamma_0)} \vertiii{f}_{\mu} \\
 &\le e^{-\ln(\gamma_0)} e^{t\frac{\ln(\gamma_0)}{T}} (1+\mu) \|f\|_{m_\alpha},
\end{align*}
where we used 
\[\frac1{1+\mu} \vertiii{\cdot}_\mu \le \|\cdot\|_{m_\alpha} \le \frac{1}{\mu} \vertiii{\cdot}_\mu. \]
Thus, for some constant $C > 0$ (possibly larger than $e^{-\ln(\gamma_0)} (1+\mu)$ to handle the case $t < T$), for $\kappa = -\ln(\gamma_0)/T > 0$, we obtain 
\begin{align}
	\label{eq:conclusion_Step_5}
	\|S_t f\|_{L^1} \le Ce^{-\kappa t} \|f\|_{m_{\alpha}}. 
\end{align}

\medskip 

\textbf{Step 2: Interpolation.} We derive a convergence result for $\|\cdot\|_{m_q}$ for $q \in (0,1]$. Set 
\[ L^1_0(G) = \{g \in L^1(G), \langle g \rangle = 0\}  \hbox{ and }  L^1_{w,0}(G) = \{g \in L^1_w(G), \langle g \rangle = 0 \} \]
for any weight $w$ on $\bar{G}$. We recall the notation $M_1$ from \eqref{eq:def_M_1}.
Note that $\int_{\R^d} |v|^2 \, M_1(v) \, \d v = 1$.
We consider $\Pi: L^1(G) \to L^1_0(G)$ the bounded projection such that, for all $h \in L^1(G)$, $(x,v) \in G$, 
\[ \Pi h(x,v) = h(x,v) - \frac{M_1(v)|v|^2}{|\Omega|} \int_G h(y,w) \, \d y \, \d w, \]
where $|\Omega|$ denotes the volume of $\Omega$. By use of hyperspherical coordinates, it is straightforward to check that $\Pi h \in L^1_{m_{\frac{3}{2}}}(G)$ for all $h \in L^1_{m_{\frac{3}{2}}}(G)$. Also, there exists a constant $C_\Pi > 0$ such that $\|\Pi h \|_{m_{\frac{3}{2}}} \le C_\Pi \|h\|_{m_{\frac{3}{2}}}$ for all $h \in L^1_{m_{\frac{3}{2}}}(G)$ and $\|\Pi h \|_{L^1} \le C_\Pi \|h\|_{L^1}$. Since $\langle h \rangle = 0$ implies $\Pi h = h$, and $\langle \Pi h \rangle = 0$ for all $h \in L^1(G)$, $\Pi$ is a bounded projection as claimed. 

Let $t > 0$. From Theorem \ref{thm:well_posed_trace}, we have
\begin{align*}
	\vertiii{S_t}_{L^1_0(G) \to L^1_0(G)} \le 1, 
\end{align*}
and from Step 1., since $3/2 \in (1,d)$,  there exist $C, \kappa > 0$ such that 
\[ \vertiii{S_t}_{L^1_{m_{\frac{3}{2}},0}(G) \to L^1_0(G)} \le Ce^{-\kappa t}. \]
We apply Corollary \ref{coroll:Interpol} with the projection $\Pi$ and the values:
\begin{enumerate}[1.]
	\item $A_1 = L^1_{m_{\frac32}}(G)$,  $\tilde{A}_1 = L^1(G)$, and, using the definition of $\Pi$, $A'_1 = L^1_{m_{\frac32}, 0}(G)$, $\tilde{A}'_1 = L^1_{0}(G)$,
	\item $A_2 = \tilde{A}_2 = L^1(G)$, and, using the definition of $\Pi$, $A_2' = \tilde{A}_2' = L^1_0(G)$,
	\item $\gamma = \tfrac{2q}{3} \in (0,1)$, so that $A_{\gamma} = L^1_{m_q}(G)$,  $\tilde{A}_{\gamma} = L^1(G)$, and, using the definition of $\Pi$, we have $A_{\gamma}' = (A_1' + A_2') \cap A_{\gamma} = L^1_{m_q, 0}(G)$, $\tilde{A}_\gamma = (\tilde{A}_1' + \tilde{A}_2') \cap \tilde{A}_\gamma = L^1_{0}(G)$. 
\end{enumerate}
We conclude from the corollary that
\begin{align*}
	\vertiii{S_t}_{ L^1_{m_q, 0}(G) \to L^1_0(G)} \le C^{2q/3} e^{-\frac{2q \kappa}{3} t}.
\end{align*}
The same argument can be applied for all $t \ge 0$. The conclusion follows.
\end{proof}
%
%

\subsection{Proof of Theorem \ref{thm:main_2}}

We first note that the proofs of \eqref{eq:cvg_polynomial} and \eqref{eq:cvg_expo} are straightforward applications of Theorem \ref{thm:main_1} once \textit{i.} is established. Thus, \textit{i.} is the sole point of the statement whose proof is lacking.

\medskip

As before, we only detail the exponential case: the polynomial one can be established exactly as in \cite[Section 5.3]{Bernou_2021}. 

\medskip 

\textbf{Step 1: Uniqueness.} Let $\epsilon \in (0,1/2)$, $\alpha = d - \epsilon$. We will recycle Lemma \ref{lemma:contraction_mu}. Assume there exists two steady states $f_\infty$, $g_\infty$ with the desired properties. Applying Lemma \ref{lemma:contraction_mu} with $\alpha$ gives the existence of $T, \mu > 0$ and $\gamma_0 \in (0,1)$ such that \eqref{eq:contraction_mu_norm} holds for all $f \in L^1_{m_\alpha, 0}(G)$. Since both $f_\infty$ and $g_\infty$ belong to $L^1_{m_{\alpha}}(G)$, with $\langle f_\infty- g_\infty \rangle =0$ by linearity, we obtain
\begin{align}
	\label{eq:temp_uniqueness}
	\vertiii{S_T(f_\infty - g_\infty)}_\mu \le \gamma_0 \vertiii{f_\infty - g_\infty}_\mu.
\end{align}
Since $S_T (f_\infty - g_\infty) = f_\infty - g_\infty$ by linearity and since both are steady states, \eqref{eq:temp_uniqueness} rewrites
\begin{align*}
	\vertiii{f_\infty - g_\infty}_\mu \le \gamma_0 \vertiii{f_\infty - g_\infty}_\mu. 
\end{align*}
It follows that $\vertiii{f_\infty - g_\infty}_\mu = 0$, and thus $\|f_\infty - g_\infty\|_{L^1} = 0$, which proves the uniqueness.

\medskip 

\textbf{Step 2: Existence.} Set $\alpha = d - \epsilon$, let $g \in L^1_{m_\alpha}(G)$ with $\langle g \rangle = 1$ and let again $T, \mu > 0$ and $\gamma_0 \in (0,1)$ given by Lemma \ref{lemma:contraction_mu}. Define, for all $h \ge 1$, 
\[ g_h = S_{Th} g, \qquad f_h = g_{h+1}-g_h. \]
Note that for all $h \ge 1$, $\langle f_h \rangle = 0$ by mass conservation. The inequality \eqref{eq:contraction_mu_norm} applied to $f_h$ reads
\begin{align}
	\label{eq:contraction_fh}
	\vertiii{f_{h+1}}_\mu \le \gamma_0 \vertiii{f_h}_\mu. 
\end{align}
It follows that $(\vertiii{f_h}_\mu)_{h \in \mathbb{N}^*}$ is a non-negative, decreasing sequence converging towards $0$. Hence, for $0 < \omega \ll 1$ fixed, one can set $N > 0$ such that for all $ r > N$,
\[ \vertiii{f_r}_\mu \le \frac{\mu}{\gamma_0} (1-\gamma_0) \omega. \]
Next, recalling $\|\cdot\|_{m_\alpha} \le \frac1{\mu} \vertiii{\cdot}_\mu$, we have, for $q > r > N$, 
\begin{align*}
	\mu \|g_{q+1} - g_{r+1}\|_{m_\alpha} &= \mu \Big\| \sum_{h = r+1}^q f_h\Big\|_{m_\alpha} \\
	&\le \mu \sum_{h = r}^{q-1} \|S_T f_h\|_{m_\alpha} \\
	&\le  \sum_{h = r}^{q-1} \vertiii{S_T f_h}_\mu \\
	&\le  \vertiii{f_r}_\mu \sum_{h=1}^{q-r} \gamma_0^h \\
	&\le \vertiii{f_r}_\mu \frac{\gamma_0}{1-\gamma_0} \le \mu \omega,
\end{align*}
by definition of $N$, where we used repeatedly \eqref{eq:contraction_fh}. We deduce that $(g_h)_{h \ge 0}$ is a Cauchy sequence in the Banach space $L^1_{m_\alpha}(G)$, and thus converges towards a limit $f_\infty$ with $\langle f_\infty \rangle = \langle g \rangle$ by mass conservation. A similar argument to the one used in the proof of uniqueness shows that $f_\infty$ is independent of the starting function $g \in L^1_{m_\alpha}(G)$.

\section{Counter-example for Hypothesis \ref{hypo:sigma_below} and lower bounds on the convergence rate}
\label{sec:counter_ex}

This section is devoted to the proof of Theorem \ref{thm:lower_bounds}. We draw inspiration from the work of Aoki and Golse \cite[Section 3]{Aoki_2011}.

We let $B := B(0,1)$ be the unit ball in $\R^d$ centered at $0$ and use $|B|$ to denote its volume. Upon translating and rescaling, we may assume $0 \in \Omega$,  and $R = \frac{1}{2} d(\partial \Omega,0) > 1$. In the whole section, we pick the following initial data: for $0 < \epsilon \ll 1$ to be chosen, $(x,v) \in G$,
\begin{align}
	\label{eq:def_f_sec_upper}
	 f(x,v) = \frac{1}{\epsilon^{2d} |B|^2 } \mathbf{1}_{\epsilon B}(x) \mathbf{1}_{\epsilon B} (v). 
	\end{align}
Note that $f \in L^1_{m_\alpha}(G)$ for all $\alpha \in (0,d)$ and that $\langle f \rangle = 1$.
Throughout the proof, $f_\infty$ is given by Theorem \ref{thm:main_2}, and we set $H_0 = \|f_\infty\|_{L^\infty(G)}$,
which is finite by assumption. We start by establishing a preliminary lemma, deduced from bounds for the convergence of $(S_t f)_{t \ge 0}$ towards $f_\infty$. This leads to an inequality parameterized by $\epsilon$. We then deduce all three results of Theorem \ref{thm:lower_bounds} by making different choices of $\epsilon$ in the various settings.

\medskip

In the whole section, we write $|B|$ for the volume of $B(0,1)$, $x^+$ denotes $\max(0,x)$ for $x \in \R$ and for all $A \subset \R^d$, $u \in \R^d$, $A+u = \{z+u: z \in A\}$. 

\subsection{A preliminary lemma}

We prove the following:
\begin{lemma}
	Let $\alpha \in (0,d)$. Assume there exists a uniform decay rate $E: \R_+ \to \R_+$ such that $E(t) \to 0$ as $t \to \infty$ and for all $g \in L^1_{m_\alpha}(G)$ with $\langle g \rangle = 1$, for all $t \ge 0$,
	\begin{align*}
		\Big\|S_{t}g - f_\infty\|_{L^1(G)} \le E(t) \|g - f_\infty\|_{m_\alpha}.
	\end{align*}
	Then, there exist $C_\alpha > 0$ allowed to depend on $\|f_\infty\|_{m_\alpha}$ and $\epsilon_0 \in (0,1)$ such that for all $\epsilon \in (0,\epsilon_0)$,
	\begin{align}
		\label{eq:lemma_lower_bound}
		&\int_G \frac{1}{\epsilon^{2d} |B|^2} \mathbf{1}_{\epsilon B}(v) \mathbf{1}_{\epsilon B + tv} (x)  \mathbf{1}_{\{0 \le t|v| \le R - \epsilon\}}\nonumber  \\
		&\qquad \times \Big[e^{-\int_0^{t} \sigma(x-(t-u)v) \, \d u}  - \epsilon^{2d}|B|^2 H_0 \Big]^+ \, \d v \, \d x \le C_\alpha E(t) \epsilon^{-\alpha}. 
	\end{align}
\end{lemma}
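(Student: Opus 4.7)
The plan is to apply the assumed convergence bound to the specific initial data $f$ defined in \eqref{eq:def_f_sec_upper}, and to compare both sides: the $L^1$ distance to $f_\infty$ is bounded below by a free-streaming contribution, while $\|f - f_\infty\|_{m_\alpha}$ is bounded above by a power of $\epsilon$.

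First I would obtain the pointwise lower bound on $S_t f$. Starting from the Duhamel formula \eqref{eq:Duhamel}, by keeping only the term $\mathbf{1}_{\{t < \tau(x,-v)\}} e^{-\int_0^t \sigma(x-(t-s)v)\,\d s} f(x-tv,v)$ (which amounts to the comparison principle with the auxiliary problem stated in the introduction: the loss-only transport equation), we obtain
\[
  S_t f(x,v) \;\ge\; \mathbf{1}_{\{t<\tau(x,-v)\}}\, e^{-\int_0^t \sigma(x-(t-s)v)\,\d s} f(x-tv,v).
\]
With our specific $f$, the indicator $f(x-tv,v)$ forces $v \in \epsilon B$ and $x \in \epsilon B + tv$. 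Under these conditions, for $s \in [0,t]$ one has $|x-sv| \le |x-tv| + (t-s)|v| \le \epsilon + t|v|$, so provided $t|v| \le R-\epsilon$ the whole backward characteristic stays in $B(0,R)\subset \Omega$, which ensures $t<\tau(x,-v)$; this gives the geometric setup matching the indicators in \eqref{eq:lemma_lower_bound}.

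Next I would derive a pointwise lower bound for $(S_t f - f_\infty)^+$. Using $f_\infty(x,v)\le H_0$ almost everywhere, on the set where the previous indicators hold we get
\[
  S_t f(x,v) - f_\infty(x,v) \;\ge\; \frac{1}{\epsilon^{2d}|B|^2} e^{-\int_0^t \sigma(x-(t-u)v)\,\d u} - H_0,
\]
hence $|S_t f - f_\infty|(x,v) \ge \bigl[\tfrac{1}{\epsilon^{2d}|B|^2} e^{-\int_0^t \sigma(x-(t-u)v)\,\d u} - H_0\bigr]^+$ on that set. Integrating over $G$ with the indicators gives a lower bound on $\|S_t f - f_\infty\|_{L^1}$ that matches exactly the left-hand side of \eqref{eq:lemma_lower_bound} after factoring.

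For the upper bound side, I would apply the assumed convergence inequality to $f$, and estimate $\|f - f_\infty\|_{m_\alpha} \le \|f\|_{m_\alpha} + \|f_\infty\|_{m_\alpha}$. Since $f_\infty \in L^1_{m_\alpha}(G)$ by Theorem \ref{thm:main_2}, the second term is a fixed constant. For the first, since $f$ is supported on $\epsilon B \times \epsilon B$ and $m_\alpha(x,v) \le C\bigl(1 + |v|^{-\alpha}\bigr)$ uniformly in $x$ (from the definition \eqref{eq:def_m_alpha}, using $|v|\le\epsilon\le 1$), a hyperspherical change of variables in the velocity integral yields
\[
  \|f\|_{m_\alpha} \;\le\; \frac{C}{\epsilon^{2d}|B|^2}\, \epsilon^d|B|\int_{\epsilon B}\frac{\d v}{|v|^\alpha} \;\le\; \frac{C_\alpha}{\epsilon^\alpha},
\]
which is finite precisely because $\alpha<d$. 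The singularity management here is the main technical point, but it is routine. Choosing $\epsilon_0$ small enough that $\|f_\infty\|_{m_\alpha}\le \epsilon_0^{-\alpha}$ absorbs the steady-state contribution into the same bound $\|f - f_\infty\|_{m_\alpha}\le C_\alpha \epsilon^{-\alpha}$ for $\epsilon \in (0,\epsilon_0)$. Combining with the lower bound established previously yields \eqref{eq:lemma_lower_bound}.
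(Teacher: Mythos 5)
Your proposal is correct and follows essentially the same route as the paper: a lower bound on $\|S_t f - f_\infty\|_{L^1}$ via the loss-only transport solution (the paper phrases this as a comparison with the auxiliary problem with absorbing boundary, which is exactly the Duhamel term you keep), the bound $f_\infty \le H_0$ to form the positive part, and the hyperspherical estimate $\|f\|_{m_\alpha} \lesssim \epsilon^{-\alpha}$ exploiting $\alpha < d$. Your explicit check that the backward characteristic stays in $B(0,R)$ when $t|v| \le R-\epsilon$ is the same geometric fact the paper invokes to replace $\mathbf{1}_{\{\tau(x,-v)\ge t\}}$ by $\mathbf{1}_{\{0\le t|v|\le R-\epsilon\}}$.
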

 
 \begin{proof} We choose $g = f$ with $f$ given by \eqref{eq:def_f_sec_upper} in the whole proof. 
 	
 	\medskip 
 
 \textbf{Step 1: comparison principle.} We introduce the problem 
\begin{align}
	\label{eq:comparison_counter_ex}
	\left\{
	\begin{array}{lll}
		&\partial_t \Phi + v \cdot \nabla \Phi = - \sigma(x)  \Phi \qquad &\mathrm{in } \quad \R_+ \times G, \\
		&\gamma_- \Phi = 0 \qquad &\mathrm{on } \quad \R_+ \times \Sigma_-, \\
		&\Phi_{| t = 0} = f, \qquad &\mathrm{in } \quad G, 
	\end{array}
	\right.
\end{align}
which corresponds to the evolution problem for the density of particles killed when a clock parameterized by $\sigma$ rings, and when they reach the boundary. By the methods of characteristics (see also Step 2 of the proof of \eqref{eq:Duhamel}), we have that, for $(t,x,v) \in \R_+ \times G$,
\[ \Phi(t,x,v) = \mathbf{1}_{\{\tau(x,-v) \ge t\}} e^{-\int_0^t \sigma(x-(t-s)v) \, \d s} f(x-tv,v) \]
is the unique solution with $\Phi$ in $L^\infty([0,\infty); L^1(G))$.
Using the Duhamel principle \eqref{eq:Duhamel} and the fact that $(S_t)_{t \ge 0}$ is non-negative,
\begin{align*}
	S_t f(x,v) \ge \Phi(t,x,v). 
\end{align*}

\medskip 

\textbf{Step 2: a lower bound for the left-hand-side of \eqref{eq:counter_ex}.} We have, by monotonicity of $x \mapsto x^+$, for $t > 0$
\begin{align*}
	&\Big\| S_{t} f - f_\infty \Big\|_{L^1(G)} \\
	&\ge \int_G \Big(S_{t} f(x,v) - f_\infty(x,v) \Big)^+ \, \d v \, \d x \\
	&\ge \Big( \Phi(t,x,v) - f_\infty(x,v) \Big)^+ \, \d v \, \d x  \\
	&\ge \int_G \Big[\frac{1}{\epsilon^{2d}|B|^2} e^{-\int_0^{t} \sigma(x-(t-u)v) \, \d u} \mathbf{1}_{\epsilon B}(v) \mathbf{1}_{\epsilon B + tv} (x)  \mathbf{1}_{\{\tau(x,-v) \ge t\}} - H_0 \Big]^+ \, \d v \, \d x \\
	&\ge \int_G \Big[\frac{1}{\epsilon^{2d}|B|^2} e^{-\int_0^{t} \sigma(x-(t-u)v) \, \d u} \mathbf{1}_{\epsilon B}(v) \mathbf{1}_{\epsilon B + t v} (x)  \mathbf{1}_{\{0 \le t |v| \le R - \epsilon\}} - H_0 \Big]^+ \, \d v \, \d x 
\end{align*}
where we have used that, on $\{t\le \tau(x,-v)\}$, 
\[\tau(x,-v) = t + \tau(x-tv, -v) \ge (2R - \epsilon)/|v|\] if $x - t v \in \epsilon B$ by definition of $R$. Using the properties of $x \mapsto x^+$, we then have
\begin{align*}
		\Big\| S_{t} f - f_\infty \Big\|_{L^1(G)} 
		&\ge \int_G \frac{1}{\epsilon^{2d} |B|^2} \mathbf{1}_{\epsilon B}(v) \mathbf{1}_{\epsilon B + tv} (x)  \mathbf{1}_{\{0 \le t|v| \le R - \epsilon\}} \\
		&\qquad \times \Big[e^{-\int_0^{t} \sigma(x-(t-u)v) \, \d u}  - \epsilon^{2d}|B|^2 H_0 \Big]^+ \, \d v \, \d x. 
\end{align*}

\medskip 

\textbf{Step 3: An upper bound for the right-hand-side.} For $C_\alpha > 0$ a constant depending on $\alpha$, $\|f_\infty\|_{m_\alpha}$ allowed to change from line to line, using that for all $(x,v) \in G$, by convexity,
\begin{align*}
	 m_\alpha(x,v) \le C_\alpha \Big( 1 + \frac{1}{|v|^\alpha} + |v|^{2\delta \alpha}\Big), 
	\end{align*}
	
	\vspace{-.2cm}
	
we get
\begin{align*}
	E(t) \|f - f_\infty\|_{m_\alpha} &\le E(t) \Big[ \int_{\epsilon B} \int_{\epsilon B} \frac{1}{\epsilon^{2d}|B|^2} \Big(e^2 + \frac{d(\Omega)}{c_4|v|} + |v|^{2 \delta}\Big)^\alpha + \|f_\infty\|_{m_\alpha} \Big] \\
	&\le C_\alpha E(t) \Big[ 1 + \epsilon^{2\delta \alpha} + \frac{1}{\epsilon^\alpha} \Big].
\end{align*}
Indeed, note that by use of hyperspherical coordinates, since $\alpha < d$
\begin{align*}
	\int_{\epsilon B} \frac{1}{\epsilon^d |B|} \frac{d(\Omega)^\alpha}{c_4^\alpha|v|^\alpha} \, \d v \le C_\alpha \int_0^\epsilon \frac1{\epsilon^d} r^{d-1-\alpha} \, \d r \le C_\alpha \epsilon^{-\alpha}.
\end{align*}
Choosing $\epsilon_0 = 2^{-1 - \frac1{\alpha}} < 1$ concludes the proof. 
\end{proof}

The next subsections are devoted to the proof of Theorem \ref{thm:lower_bounds}.

\subsection{Proof of point (1) of Theorem \ref{thm:lower_bounds}} \

\medskip 

\textbf{Step 1: improved lower bound.} Starting from \eqref{eq:lemma_lower_bound}, we note first that, since $\sigma \le \sigma_\infty$ and by monotonicity of $x \mapsto x^+$, one obtains
\begin{align*}
&\Big[e^{- \sigma_\infty t}  - \epsilon^{2d}|B|^2 H_0 \Big]^+ \int_G \frac{1}{\epsilon^{2d} |B|^2} \mathbf{1}_{\epsilon B}(v) \mathbf{1}_{\epsilon B + tv} (x)  \mathbf{1}_{\{0 \le t|v| \le R - \epsilon\}}  \d v \, \d x \le C_\alpha E(t) \epsilon^{-\alpha}.
\end{align*} 
Using Tonelli's theorem to perform first the integration in space, we find
\begin{align*}
	&\Big[e^{- \sigma_\infty t}  - \epsilon^{2d}|B|^2 H_0 \Big]^+ \int_{\R^d} \frac{1}{\epsilon^{d} |B|} \mathbf{1}_{\epsilon B}(v)  \mathbf{1}_{\{0 \le t|v| \le R - \epsilon\}}  \d v \, \d x \le C_\alpha E(t) \epsilon^{-\alpha}.
\end{align*} 
The integral in $v$ lies in the domain $\{v \in \R^d: |v|\le \epsilon \hbox{ and } |v| \le \frac{R-\epsilon}{t} \}$ so one gets
\begin{align*}
	&\Big[e^{-\sigma_\infty t}  - \epsilon^{2d}|B|^2 H_0 \Big]^+ \frac{1}{\epsilon^d} \min\Big( \epsilon^d, \frac{(R-\epsilon)^d}{t^d} \Big)  \le C_\alpha E(t) \epsilon^{-\alpha},
\end{align*}
and we conclude that 
\begin{align}
	\label{eq:final_step_pt_1_thm_lower}
	&\Big[e^{-\sigma_\infty t}  - \epsilon^{2d}|B|^2 H_0 \Big]^+ \min\Big( 1, \Big(\frac{R-\epsilon}{\epsilon t} \Big)^d \Big)  \le C_\alpha E(t) \epsilon^{-\alpha}.
\end{align}

\medskip 

\textbf{Step 2: conclusion.} With the choice $\epsilon = e^{-\sigma_\infty t}$, one gets from \eqref{eq:final_step_pt_1_thm_lower}, for $t$ large enough so that $\epsilon < \epsilon_0$, $e^{-\sigma_\infty t} - e^{-2d\sigma_\infty t} |B|^2 H_0 \ge e^{-\sigma_\infty t}/2$ and $\frac{e^{t\sigma_\infty} R}{t} - \frac1{t}  > 2$,
\begin{align*}
	\Big[e^{-\sigma_\infty t} - e^{-2d \sigma_\infty t} |B|^2 H_0 \Big]^+ \le C_\alpha e^{\sigma_\infty \alpha t} E(t). 
\end{align*}
We conclude that, for $t$ large enough,
\begin{align*}
	E(t) \ge C_\alpha e^{-\sigma_\infty (1+\alpha) t}.
\end{align*}

\subsection{Proof of point (2) and (3) of Theorem \ref{thm:lower_bounds}} \

We rewrite slightly differently the previous setting. Once again, this is done without loss of generality by rescaling and translating, in view of the hypotheses. 
We assume that $0 \in \Omega$, that $R = \frac{1}{2} d(\partial \Omega,0) > 1$, and that $\sigma \equiv 0$ on $B(0,1)$. Note that $\sigma$ may cancel on a larger region of $\Omega$, but we can always reduce the considered ball to fit this framework. We consider again the initial data $f$ given by \eqref{eq:def_f_sec_upper}.  

\medskip

\textbf{Step 1: a lower bound for the killing term.} Let $v \in \epsilon B$, $t > 0$, $x \in \Omega$ with $x \in \epsilon B + t v$. Then, for all $u \in [0,t]$, by assumptions on $\sigma$ 
	\begin{align*}
		\sigma(x - (t-u)v ) = \sigma(x-(t-u)v) \mathbf{1}_{\{x - (t-u)v \not \in B(0,1)\}} \le \sigma_\infty \mathbf{1}_{\{x - (t-u)v \not \in B(0,1)\}}. 
	\end{align*}
	Moreover, on $\{u: x - (t-u)v \not \in B(0,1)\}$, 
	\begin{align*}
		|uv| \ge |x - (t-u)v| - |x - tv| \ge 1 - \epsilon
	\end{align*}
	and since $|v| \le \epsilon$, we find $u \ge \frac{1-\epsilon}{\epsilon}$. Hence 
	\begin{align*}
		\Big\{u \in [0,t]: x - (t-u)v \not \in B(0,1) \Big\} \subset \Big\{u \in [0,t]: u \ge \frac{1-\epsilon}{\epsilon} \Big\}.
	\end{align*} We thus get
	\begin{align*}
		e^{-\int_0^{t} \sigma(x-(t-u)v) \, \d u} \ge e^{-\sigma_\infty \big(t-\frac{1-\epsilon}{\epsilon}\big)^+}. 
	\end{align*}
	
	\medskip 
	
	\textbf{Step 2: improved version of \eqref{eq:lemma_lower_bound}.}
	Injecting Step 1 into \eqref{eq:lemma_lower_bound}, we find by monotonicity of $x \mapsto x^+$
	\begin{align*}
		&\Big[e^{-\sigma_\infty \big(t-\frac1{\epsilon} + 1\big)^+}  - \epsilon^{2d}|B|^2H_0 \Big]^+ \int_G \frac{ \mathbf{1}_{\epsilon B}(v)}{\epsilon^{2d} |B|^2} \mathbf{1}_{\epsilon B + tv} (x)  \mathbf{1}_{\{0 \le t|v| \le R - \epsilon\}} \, \d v \, \d x \le C_\alpha E(t) \epsilon^{-\alpha}.
	\end{align*}
	Computing the integration in space, since the integral in $v$ lies in $\{v \in \R^d: |v| \le \epsilon \hbox{ and } |v| \le \frac{R-\epsilon}{t}\}$, we get
	\begin{align*}
		&\Big[e^{-\sigma_\infty \big(t-\frac1{\epsilon} + 1\big)^+}  - \epsilon^{2d}|B|^2H_0 \Big]^+ \frac1{\epsilon^d} \min \Big( \epsilon^d, \frac{(R-\epsilon)^d}{t^d} \Big)  \le C_\alpha E(t) \epsilon^{-\alpha},
	\end{align*}
	which leads to
	\begin{align}
		\label{eq:step_2_lower_bounds}
		 \Big[e^{-\sigma_\infty \big(t-\frac1{\epsilon} + 1\big)^+}  - \epsilon^{2d}|B|^2H_0 \Big]^+ \min \Big( 1, \frac{(R-\epsilon)}{\epsilon t} \Big)^d \le C_\alpha E(t) \epsilon^{-\alpha}.
	\end{align}
	
	\medskip 
	
	\textbf{Step 3: Conclusion for point (2).} Assume that $E(t) = C e^{-\kappa t}$. From Step 2, the following inequality should check
	\begin{align}
		\label{eq:temp_cvg_rate}
		 \Big[e^{-\sigma_\infty \big(t-\frac1{\epsilon} + 1\big)^+}  - \epsilon^{2d}|B|^2H_0 \Big]^+ \min \Big( 1, \frac{(R-\epsilon)}{\epsilon t} \Big)^d \le C_\alpha e^{-\kappa t} \epsilon^{-\alpha}.
	\end{align}
	 With the choice $\epsilon = \frac{1}{t}$ and $t$ large enough so that $\epsilon < \epsilon_0$, $e^{-\sigma_\infty} - \frac{1}{t^{2d}} |B|^2 H_0 \ge \frac{e^{-\sigma \infty}}{2}$ and $R - \frac1{t} \ge 1$, we get,  using also $x^+ \ge x \in \R$, 
	\begin{align*}
		\frac{e^{-\sigma_\infty}}{2}  \le C_\alpha e^{-\kappa t} t^{\alpha}.
	\end{align*}
	Since the right-hand-side tends to $0$ as $t \to \infty$, \eqref{eq:counter_ex} can not hold.  
	
	\medskip 
	
	\textbf{Step 4: Conclusion for point (3).} Starting again from \eqref{eq:step_2_lower_bounds} and choosing $\epsilon = \frac{1}{t+1}$ for $t$ large enough so that $\epsilon < \epsilon_0$, $\frac{|B|^2 H_0}{ (t+1)^{2d}} < \frac12$ and $\frac{t+1}{t} (R - \frac1{t+1}) > 1$, one finds
	\begin{align*}
		\frac12 \le C_\alpha E(t) (t+1)^{\alpha}. 
	\end{align*}
	It easily follows that $E(t) \ge C_\alpha (t+1)^{-\alpha}$ for all $t$ large enough.

\bigskip

\textbf{Acknowledgments:} The author thanks Silvia Lorenzani for useful discussions and for pointing out the references \cite{Nguyen_2020, Yamaguchi_2016}, and Isabelle Tristani for suggesting the study of heavy-tailed kernels and the reference \cite{Cesbron_Mellet_Puel_2019}. This project received funding from the European Union’s Horizon 2020 Research and Innovation Program under the Marie Sklodowska-Curie Grant Agreement No. 10103324.


\bibliographystyle{plain}
\bibliography{biblio_DGCL}

\end{document}